\theoremstyle{plain}
\newtheorem{theo}{Theorem}
\newtheorem{lemm}[theo]{Lemma}
\newtheorem{prop}[theo]{Proposition}
\newtheorem{coro}[theo]{Corollary}
\theoremstyle{definition}
\theoremstyle{remark}
\newtheorem{rema}[theo]{Remark}
\definecolor{FlatRed}{RGB}{231,76,60}
\definecolor{FlatGreen}{RGB}{46,204,113}
\definecolor{FlatBlue}{RGB}{52,152,219}
\definecolor{FlatYellow}{RGB}{241,196,15}
\colorlet{FlatViolet}{FlatRed!50!FlatBlue}
\colorlet{FlatBrown}{FlatRed!50!FlatGreen}
\colorlet{FlatOrange}{FlatRed!50!FlatYellow}
\colorlet{FlatCyan}{FlatGreen!50!FlatBlue}
\title[The extended Seiberg bound in the boundary case]{Moment bounds for Gaussian multiplicative chaos with higher-dimensional singularities}
\author{Yichao Huang}
\address{Beijing Institute of Technology, School of Mathematics and Statistics, Beijing, China}
\email{yichao.huang@bit.edu.cn}
\begin{document}

\begin{abstract}
We determine the exact threshold of the extended Seiberg bound for the existence of correlation functions in the boundary Liouville conformal field theory in the unit disk. In probabilistic terms, our result is a toolbox yielding the threshold for the existence of positive moments of Gaussian multiplicative chaos measure, appliable to the case where singularities of arbitrary (co-)dimension in the background metric are present. We improve previous results of this type for $0$-dimensional singularities in~\cite{david2016liouville} and a sufficient condition for the $1$-dimensional singularity in an unpublished appendix of~\cite{huang2018liouville}. In particular, we prove the optimality of the moment bound threshold  for boundary Gaussian multiplicative chaos conjectured in~\cite{huang2018liouville}, which is equivalent to the so-called unit volume Seiberg bound of the boundary Liouville conformal field theory.
\end{abstract}

\maketitle

\section{Introduction}

\subsection{Motivations and backgrounds}

Gaussian multiplicative chaos measures are a natural family of multifractal random measures. Their study is motivated by the works of Kolmogorov~\cite{kolmogorov1941local}, Kolmogorov and Obukhov~\cite{kolmogorov_1962}, Mandelbrot~\cite{mandelbrot1972possible} in modeling intermittent phenomena in turbulence among many others, and Kahane~\cite{kahane1985chaos} formulated a mathematical theory of Gaussian multiplicative chaos following a suggestion of L\'evy. The study of positive moment bounds in the theory of Gaussian multiplicative chaos started already with the seminal paper of Kahane~\cite{kahane1985chaos}, and finds its root in the pre-sequel on Gaussian multiplicative cascades, e.g. in the work of Kahane and Peyri\`ere~\cite{kahane1976certaines}. In the latter, a beautiful elementary argument is devised to give precise moment bounds for $p\geq 1$ for classical multiplicative cascade models. Since the classical multiplicative cascade models (as well as the classical Gaussian multiplicative chaos models) all have finite first moment, the argument of~\cite{kahane1976certaines} is based on induction over $p\in]k,k+1]$ for integer $k\geq 1$, with meticulous manipulations of basic inequalities.

Moments of Gaussian multiplicative chaos are important in many applications of the theory, especially to problems stemmed from mathematical physics. During the recent developments in the mathematical study of Liouville conformal field theory proposed by Polyakov~\cite{POLYAKOV1981207} and its generalizations, understanding the behavior of different moments of Gaussian multiplicative chaos measures, with or without singularities (e.g. the so-called marked points or vertex operators for $0$-dimensional singularities), is of fundamental importance. This connection was first discovered by David, Kupiainen, Rhodes and Vargas in~\cite{david2016liouville}, where they showed that the $N$-point correlation functions of Liouville conformal field theory on the Riemann sphere can be expressed in terms of moments of Gaussian multiplicative chaos measure of the full-plane Gaussian free field, chaos measure which is integrated against pointwise log-singularities coming from the insertion points. This kind of probabilistic expressions via Gaussian multiplicative chaos extends to many other aspects in conformal field theory and related domains, and a far from complete list of results in this direction includes~\cite{david2016liouville,Guillarmou2019,Kupiainen_2020,guillarmou2020conformal,guillarmou2021segal,huang2018liouville,Remy_2020,remy2022integrability,cercle2021unit,aru2017two,ang2021fzz,ghosal2020probabilistic,ang2022moduli,cercle2021probabilistic,cercle2022ward,lacoin2022path,garban2020dynamical,Rhodes_2019,wong2020universal,kupiainen2019stress} and many many more.

In the current article, we are concerned with the basic question of the existence of positive moments for Gaussian multiplicative chaos measures with higher dimensional singularities. It is known since~\cite{huang2018liouville} that this problem is equivalent to the so-called extended Seiberg bounds of Liouville conformal field theory on geometries with boundary, that we shall briefly describe.

In the mathematical study of Liouville conformal field theory with boundary~\cite{huang2018liouville}, Rhodes and Vargas together with the author encountered naturally a new type of Gaussian multiplicative chaos, which is intimated related to hyperbolic geometry. Roughly speaking, in the study of Liouville conformal field theory with boundary, one would expect to study basic properties of the Gaussian multiplicative chaos measure integrated against a hyperbolic background metric (raised to certain power determined by the parameter of the Liouville conformal field theory). An important example is the case of the unit disk, for which the background metric is given by a positive power of the Poincar\'e metric, which exhibits strong divergence along the boundary, here the unit circle. The classical singularities originated from the so-called marked-points (or vertex operators) are now replaced by a uniform higher-dimensional singularity, e.g. singularity coming uniformly from the blowup of the background metric near the boundary unit circle in the case of the unit disk. A natural variant of Gaussian multiplicative chaos measures appears, with surprisingly albeit naturally nice properties, with the remarkable distinction from the classical case that this new family of random measures can fail to possess finite first moment. This lack of larger than first moment is a manifest distinctive trait of the boundary Gaussian multiplicative chaos, and renders the analysis thereof difficult.

\subsection{Seiberg bounds and probabilistic methods}

We give a brief review about the Seiberg bounds and the extended (i.e. probabilistic) Seiberg bounds.

Seiberg bounds are first proposed by Seiberg in his pioneering review~\cite{seiberg1990notes} for some form of the so-called $2$-point Liouville correlation functions. In general, Seiberg bounds form a set of necessary and sufficient conditions to ensure existence of Liouville correlation functions on any topological surface with arbitrarily many marked points, called vertex operators (with real parameters) in the physics literature. This should be compared with the classical analogue of conical singularities~\cite{Troyanov1991PrescribingCO}. In the probabilistic construction of David, Kupiainen, Rhodes and Vargas~\cite{david2016liouville}, this prediction was rigorously proven with Gaussian multiplicative chaos techniques.

In plain words, Seiberg bounds, in the form that we are interested in here, come in two parts. On the one hand we have a ``local'' condition, which states that each parameter of marked points cannot be too large to ensure integrability around each insertion point, and on the other we have a ``global'' condition, which states that the sum of parameters of marked points should be large enough to satisfy a Gauss-Bonnet flavored condition in order to have the finiteness of the total volume of the whole space.

In particular, in~\cite{david2016liouville}, with the probabilistic interpretation of Liouville correlation functions as moments of Gaussian multiplicative chaos, one sees the Seiberg bounds in a very precise way. For clarity, let us recall the expression for the Liouville correlation functions on the Riemann sphere without diving too much into details. Following~\cite{david2016liouville}, the $N$-point Liouville correlation functions on the Riemann sphere, with marked points $\{z_k\}_{1\leq k\leq N}\in\mathbb{C}^{N}$ of respective parameters $\{\alpha_{k}\}_{1\leq k\leq N}\in\mathbb{R}^{N}$, is expressed in terms of a Gaussian multiplicative chaos measure $\mu^{\gamma}$ with parameter $\gamma\in(0,2)$ via the following formula (we take the cosmological constant equal to $1$ for simplicity):
\begin{equation*}
    \left\langle \prod_{k=1}^{N}V_{\alpha_k}(z_k)\right\rangle_{\gamma}\coloneqq 2\gamma^{-1}\Gamma(s)\prod_{i<j}|z_i-z_j|^{-\alpha_i\alpha_j}\mathbb{E}\left[\left(\int_{\mathbb{C}}F(x,\bm{z})\mu^{\gamma}(d^2x)\right)^{-s}\right]
\end{equation*}
where $Q=\frac{2}{\gamma}+\frac{\gamma}{2}$ and $s=\frac{\sum_{k=1}^{N}\alpha_k-2Q}{\gamma}$, as long as the expression on the right hand side makes sense. Here the function $F$ can be view as a integral kernel with point-wise singularities whose strengths are dictated by the parameters $\{\alpha_k\}_{1\leq k\leq N}$:
\begin{equation*}
    F(x,\bm{z})\coloneqq\prod_{k=1}^{N}\left(\frac{|x|\vee 1}{|x-z_k|}\right)^{\gamma\alpha_k}.
\end{equation*}
See \cite[Section~2.2]{vargas2017lecture} for a quick introduction to the notations.

One observes two possible obstructions to the existence of Liouville correlation functions in this case. The first one, purely algebraic in the above formula, is the presence of a special function, namely $\Gamma(s)$. This corresponds exactly to the prediction of Seiberg, and yields one of the original Seiberg bounds, i.e. the ``global'' condition $\sum_{k}\alpha_k>2Q$. The other obstruction, which is of probabilistic nature, comes from the moment term of Gaussian multiplicative chaos measure. Especially, one has to check that the an insertion of large parameter will not create a singularity too important in the function $F$ for the Gaussian multiplicative chaos to absorb locally.

The condition for the second obstruction above is called the extended Seiberg bound, or probabilistic Seiberg bound, according to the terminology of~\cite{david2016liouville}. One can see it as a quantum analogue of integrability of singular kernels. As we shall remind in detail shortly, the Gaussian multiplicative chaos is realized as a multifractal measure, thus it should in principle allow high order singular kernels compared to regular integral with continuous functions. This for the reason that, at least intuitively, a fractal type of random measure will only capture parts of the singular kernel, therefore integrates to a ``thinned'' version of the singular kernel.

The extended Seiberg bound for discete marked points, or $0$-dimension singularities, was settled in~\cite{david2016liouville}. In the study of Liouville conformal field theory with boundary via Gaussian multiplicative chaos methods, Rhodes, Vargas and the current author encountered a new type of Gaussian multiplicative chaos measure, taking into account the boundary influence to the bulk measure. That is, we realized in the course of study that the boundary effect can be viewed as uniform $1$-dimension singularity with certain parameter according to the path integral formulation. We describe now our main results with this setting, and details about this new type of Gaussian multiplicative measure will be reviewed in~Section~\ref{sec:Background}.\footnote{We should also point out that the thorough investigation of the integrability (in the sense of random singular kernels) of this kind of higher-dimensional singularities by the Gaussian multiplicative chaos measure was also raised as a question by an anonymous referee of~\cite{huang2018liouville}. The current paper gives a complete answer this excellent question, by developing a general cut-off method robust enough to treat this type of questions for singularities with arbitrary (co)-dimension and boundaries with general symmetry.}

\subsection{Main results}
We call a centered Gaussian field $X$ boundary log-correlated (with respect to the boundary $\mathbb{R}=\partial\mathbb{H}$ unless otherwise mentioned), if it has the following Neumann-type covariance kernel in some bounded open domain $\Omega\subset\mathbb{C}$:
\begin{equation*}
    \forall z,w\in\mathbb{C},\quad \mathbb{E}[X(z)X(w)]\eqqcolon K_{\text{N}}(z,w)=-\ln|z-w||z-\overline{w}|+g(z,w)
\end{equation*}
with a bounded correction term $g(z,w)$. Without loss of generality, we will always suppose that $\Omega$ contains the origin, i.e. $0\in\Omega$, and that $\Omega$ is small enough so that $K_{\text{N}}(z,w)$ is positive and well-defined as a covariance kernel in $\Omega$.

The object of interest is the following Gaussian multiplicative chaos measure associated to $X$, reweighted by the hyperbolic metric on $\mathbb{H}$. We consider, for all compact Borel sets $A\subset\Omega$, the hyperbolic Gaussian multiplicative chaos measure
\begin{equation}\label{eq:DefinitionBoundaryGMC}
    \mu^{\gamma}_{\text{H}}(A)\coloneqq\int_{A}\text{Im}(z)^{-\frac{\gamma^2}{2}}\mu^{\gamma}(dz),
\end{equation}
where $\mu^{\gamma}(dz)$ is the classical (i.e. un-reweighted) Gaussian multiplicative chaos measure associated to the log-correlated field $X$ in the interior of $\mathbb{H}$, see Section~\ref{sec:Background} for a reminder of its definition. The random measure $\mu^{\gamma}_{\text{H}}(A)$ appears naturally in the study of boundary Liouville conformal field theory, and its moments (with possibly singularities coming from bulk or boundary marked points) are closely related to the boundary Liouville correlation functions, see Remark~\ref{rema:ShortPhysics}. It is shown in~\cite[Section~3]{huang2018liouville} that for any fixed Carleson cube $Q$ (see below for a reminder), the measure $\mu^{\gamma}_{\text{H}}(A)$ is non-degenerate if and only if $\gamma<2$, and in~\cite[Section~4]{huang2018liouville} for the critical parameter $\gamma=2$, suitable renormalizations give rise to the so-called critical Gaussian multiplicative chaos measure (in the boundary case). We shall treat the critical case $\gamma=2$ separately in a following work.

Denote by $\mathcal{Q}^{*}_{\mathbb{R}}$ the collection of Carleson cubes constructed from a compact interval of $\mathbb{R}$, i.e.
\begin{equation}
    \mathcal{Q}^{*}_{\mathbb{R}}=\left\{Q_{[a,b]}=[a,b]\times[0,|b-a|]\subset\mathbb{H}~;~-\infty<a<b<\infty\right\}.
\end{equation}

Our main result is:
\begin{theo}[The extended Seiberg bound for the boundary Liouville conformal field theory]\label{th:Main1}
    Let $X$ be a boundary log-correlated Gaussian field defined on some bounded open domain $\Omega\subset\mathbb{C}$ containing the origin. Let $\gamma\in(0,2)$ and let $\mu^{\gamma}_{\text{H}}$ be the hyperbolic Gaussian multiplicative chaos measure as in~\eqref{eq:DefinitionBoundaryGMC}. Let $Q\in\mathcal{Q}^{*}_{\mathbb{R}}$ be a Carleson cube near the origin and contained in $\Omega$. Then the following moment
    \begin{equation*}
        \mathbb{E}\left[\mu^{\gamma}_{\text{H}}(Q)^{p}\right]
    \end{equation*}
    exists in $\mathbb{R}_{>0}$ if and only if $p<\frac{2}{\gamma^2}$.
\end{theo}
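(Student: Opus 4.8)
The plan is to exploit the exact self-similar structure of the Carleson cube $Q$ near the boundary and to reduce the question to a moment estimate for a multiplicative cascade. The one genuinely new feature compared with the classical bulk case is that, because of the Neumann reflection term $-\ln|z-\overline{w}|$ in $K_{\text{N}}$, the variance of the field doubles as one approaches $\mathbb{R}$: at scale $\ell$ and height $\asymp\ell$ one has $\mathbb{E}[X_{\ell}(z)^2]=2\ln(1/\ell)+O(1)$ instead of $\ln(1/\ell)$, and it is precisely this doubling that lowers the bulk threshold $4/\gamma^2$ to $2/\gamma^2$. Concretely, I would first establish an approximate scaling relation: for $\lambda\in(0,1)$ and a Carleson cube $Q$ near the origin,
\begin{equation*}
    \mu^{\gamma}_{\text{H}}(\lambda Q)\stackrel{d}{\approx}\lambda^{2+\frac{\gamma^2}{2}}e^{\gamma N_{\lambda}}\,\mu^{\gamma}_{\text{H}}(Q),\qquad N_{\lambda}\sim\mathcal{N}\!\left(0,\,2\ln(1/\lambda)\right),
\end{equation*}
where $\lambda^{2}$ comes from Lebesgue scaling, $\lambda^{\gamma^2/2}$ from $\text{Im}(z)^{-\gamma^2/2}$ compensating the variance, and the variance $2\ln(1/\lambda)$ of $N_{\lambda}$ records the boundary doubling. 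The bounded correction $g$ is absorbed by sandwiching $X$ between exactly scale-invariant fields via Kahane's convexity inequality. Splitting $Q$ into its two half-size boundary sub-cubes $Q_{0},Q_{1}$ (scaled copies with $\lambda=\tfrac12$) and a top region $R=\mu^{\gamma}_{\text{H}}(\text{top})$ that stays at distance $\asymp 1$ from $\mathbb{R}$, the mass $R$ is an ordinary bulk chaos with all moments of order $<4/\gamma^2$ and is therefore harmless.

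For finiteness ($p<2/\gamma^2$) I would regularize by a cutoff away from the boundary, so that all moments are a priori finite, and then close a recursion uniformly in the cutoff. For $0<p\le 1$, subadditivity and the scaling relation give
\begin{equation*}
    \mathbb{E}\!\left[\mu^{\gamma}_{\text{H}}(Q)^{p}\right]\le\chi(p)\,\mathbb{E}\!\left[\mu^{\gamma}_{\text{H}}(Q)^{p}\right]+\mathbb{E}[R^{p}],\qquad \chi(p)=2^{\,f(p)},\quad f(p)=\gamma^2 p^2-\Big(2+\tfrac{\gamma^2}{2}\Big)p+1,
\end{equation*}
and since the roots of $f$ are $\tfrac12$ and $\tfrac{2}{\gamma^2}$ one has $\chi(p)<1$ exactly for $p\in(\tfrac12,\tfrac2{\gamma^2})$, so the recursion closes and passes to the limit; the range $p\le\tfrac12$ then follows by interpolation. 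Equivalently, a Whitney decomposition toward the boundary shows that a single boundary cube of scale $\ell$ has weighted moment $\asymp\ell^{\zeta(p)}$ with $\zeta(p)=(2+\tfrac{\gamma^2}{2})p-\gamma^2p^2$, and summing over the $\asymp\ell^{-1}$ cubes at each scale converges under the same condition $\zeta(p)>1$. When $\gamma<\sqrt2$ the threshold exceeds $1$, and for $p>1$ the naive triangle inequality over the two children is too lossy near $\tfrac2{\gamma^2}$; there I would instead run a Kahane--Peyri\`ere induction on $\lceil p\rceil$, exactly as in the classical cascade moment bounds recalled in the introduction, to track the cross terms and push convergence up to $\tfrac2{\gamma^2}$.

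For optimality ($p\ge 2/\gamma^2$) the quickest case is $p\ge1$ (i.e. $\gamma\le\sqrt2$): since $\mu^{\gamma}_{\text{H}}(Q)\ge\mu^{\gamma}_{\text{H}}(Q_{0})+\mu^{\gamma}_{\text{H}}(Q_{1})$ and $(x+y)^p\ge x^p+y^p$ for $p\ge1$, taking expectations and applying the marginal scaling relation to each sub-cube yields $\mathbb{E}[\mu^{\gamma}_{\text{H}}(Q)^p]\ge\chi(p)\,\mathbb{E}[\mu^{\gamma}_{\text{H}}(Q)^p]$, which forces $\chi(p)\le1$, i.e. $p\le\tfrac2{\gamma^2}$; hence the moment is infinite for $p>\tfrac2{\gamma^2}$. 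The genuinely delicate cases are the critical exponent $p=\tfrac2{\gamma^2}$ and, when $\gamma>\sqrt2$, the range $p<1$ (where already $\mathbb{E}[\mu^{\gamma}_{\text{H}}(Q)]\asymp\int_{0}^{1}y^{-\gamma^2/2}\,dy=\infty$). For these I would prove the sharp lower tail $\mathbb{P}(\mu^{\gamma}_{\text{H}}(Q)>t)\gtrsim t^{-2/\gamma^2}$ by a Cameron--Martin tilt of the field along $\gamma K_{\text{N}}(\cdot,z)$ for $z$ near the boundary, which inserts the extra singular weight $|u-z|^{-\gamma^2}|u-\overline{z}|^{-\gamma^2}$, hence an extra clump of mass, optimized over the $\asymp 2^{n}$ competing boundary locations at scale $2^{-n}$.

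I expect the main obstacle to be exactly this sharp lower bound. The upper bound is robust, but matching it requires capturing the exact interplay of three effects: the deterministic weight singularity $\text{Im}(z)^{-\gamma^2/2}$, the doubled boundary variance, and the entropy of the boundary locations where a thick point can sit. A single-site Girsanov computation produces only the strictly larger exponent $\tfrac2{\gamma^2}+\tfrac12$, so it is essential to account for the competition among the $\asymp 2^{n}$ near-boundary sites (and to treat the critical case with its logarithmic correction) in order to recover the exponent $\tfrac2{\gamma^2}$. Throughout, the two recurring technical tools are the rigorous scaling relation with control of the correction $g$ and the systematic use of Kahane's convexity inequality.
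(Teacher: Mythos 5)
Your setup is faithful to the paper: the exact scaling relation with the doubled boundary variance, the threshold computation via the roots $\tfrac12$ and $\tfrac{2}{\gamma^2}$ of $1-\overline{\zeta}(p)$, the subadditive Whitney recursion for $p\in(\tfrac12,1]$, and the superadditivity argument forcing explosion for $p\geq 1$ all coincide with the paper's Lemma~\ref{lem:HyperbolicScaling}, Proposition~\ref{prop:TrivialProposition}, Lemma~\ref{lem:ExistenceHalfMoment} and Section~\ref{sec:Superadditivity}. But two load-bearing steps are missing. First, for finiteness when $\gamma<\sqrt2$ and $1<p<\tfrac{2}{\gamma^2}$, you invoke the Kahane--Peyri\`ere induction ``exactly as in the classical cascade moment bounds''; that induction factorizes the cross terms $\mathbb{E}[\mu_{\text{H}}(Q^{\text{L}})^{k}\mu_{\text{H}}(Q^{\text{R}})^{p-k}]$ using the exact independence of the cascade offspring, which fails here because the two half-cubes are adjacent and driven by the same log-correlated field. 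The paper's Section~\ref{sec:GaussianEstimates} exists precisely to substitute for that independence: a Slepian-type decorrelation for $\delta$-separated boxes giving the factor $\delta^{-2k(p-k)\gamma^2}$, plus the Sokoban lemma bounding the adjacent slice by $\mathbb{E}[\mu_{\text{H}}(Q^{\text{L}})^{k}\mu_{\text{H}}(Q^{\text{R},\delta})^{p-k}]\leq C_p\delta^{p-k}\mathbb{E}[\mu_{\text{H}}(Q^{\text{L}})^{p}]$. Without an estimate of this kind your recursion does not close as $p$ approaches $\tfrac{2}{\gamma^2}$.

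Second, and more seriously, the hardest direction of the theorem --- explosion for $p\geq\tfrac{2}{\gamma^2}$ when $\gamma\in(\sqrt2,2)$, where $p_c<1$ and the first moment already diverges --- is reduced in your proposal to the sharp lower tail $\mathbb{P}(\mu^{\gamma}_{\text{H}}(Q)>t)\geq c\,t^{-2/\gamma^2}$, which you propose to obtain by a Girsanov tilt optimized over the $\asymp 2^{n}$ competing boundary sites, and which you yourself flag as the main obstacle. As you observe, the single-site computation gives a strictly larger exponent, so the entire content of the theorem is concentrated in the multi-site ``entropy'' optimization, which is asserted rather than carried out; this is a restatement of the difficulty, not a proof. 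The paper avoids tail asymptotics altogether: it slices $Q$ into horizontal strips $L_{r,n}$, proves a no-exponential-decay lemma at $p_c$ by interpolating $\ln\mathbb{E}[\mu_{\text{H}}(L_{r,n})^{p}]$ against the explicitly computable first moment $\mathbb{E}[\mu_{\text{H}}(L_{r,n})]\asymp 2^{n(\gamma^2/2-1)}$, and then iterates the relation $\mathbb{E}[\mu_{\text{H}}(L_{r,n+1})^{p}]\geq u\,\mathbb{E}[\mu_{\text{H}}(L_{r,n})^{p}]-C2^{-\kappa n}$ with $u>1$, obtained from the converse subadditivity inequality together with the decorrelation estimates, to contradict finiteness (with a separate, more delicate choice of scale-dependent cutoffs at the critical exponent $p=\tfrac{2}{\gamma^2}$ itself). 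If you wish to keep the tail-bound route you must actually perform the localization and entropy computation; as written, the proposal leaves the central claim unproved.
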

To avoid confusions from mixing terminologies in probability and in physics, we focus on this mathematically flavored formulation for the most part of the article (but see Remark~\ref{rema:ShortPhysics} for some backgrounds and relations to physics).

\begin{rema}\label{rem:CoveringDisk}
Although we restrict the study of moments to compact Carleson cubes, this result is sufficient to cover the case where the whole boundary of some boundary Liouville conformal field theory is considered. Indeed, an equivalent description of boundary Liouville conformal field theory with compact boundaries is based on the disk model $\mathbb{D}\subset\mathbb{C}$, with compact boundary the unit circle $\mathbb{T}=\partial\mathbb{D}$. By a standard conformal transformation (e.g. Cayley transform), the image of an above-defined Carleson cube covers a positive portion of the unit circle. By compactness of $\mathbb{T}=\partial\mathbb{D}$, we can cover the whole unit circle by a finite number of images of such cubes. Therefore Theorem~\ref{th:Main1} is sufficient to treat the case with the whole boundary involved, idem for Liouville theory defined on manifolds with multiple compact boundaries.
\end{rema}

\begin{rema}
For the most part of this paper, we will be concerned with the case $\gamma\in(0,2)$. The parameter $\gamma$ will be dropped in most of the notations when there is no confusion. The critical case $\gamma=\gamma_c=2$ requires several addtional technicalities, and will be treated in a separate upcoming work.
\end{rema}

\subsection*{Acknowledgement} We warmly thank R\'emi Rhodes and Vincent Vargas for many discussions on boundary Gaussian multiplicative chaos measures. This work is partially supported by National Key R\&D Program of China (No. 2022YFA1006300). Support from ERC Advanced Grant 741487 QFPROBA and hospitality of the University of Helsinki are also gratefully acknowledged.

\section{Mathematical background}\label{sec:Background}
In the rest of this article, all constants $C$ may change from line to line, and we keep their dependencies in the index when necessary (e.g. $C_p$ means a constant depending on $p$).

\subsection{Classical Gaussian multiplicative chaos measures}
The classical Gaussian multiplicative chaos measure is commonly described in plain words as the ``exponential of a log-correlated Gaussian field''. The basic construction is to take a log-correlated Gaussian field $X$ on some domain $\Omega\subset\mathbb{R}^{d}$ in arbitrary dimension $d\geq 1$, i.e. $X$ is a centered Gaussian field with covariance kernel (where $g$ is a bounded function on $\Omega$)\footnote{In many applications, the correction term $g$ comes with extra regularity assumptions, such as $g\in\mathcal{C}^{0}$ or $g\in W^{s,2}$ for some $s>1$. For the main result of this paper, we only need the boundedness of $g$.}
\begin{equation}\label{eq:LogCovarianceKernel}
    K(z,w)\coloneqq\mathbb{E}[X(z)X(w)]=-\ln|z-w|+g(z,w),
\end{equation}
and define the (properly renormalized) exponential $:e^{\gamma X(z)}:$ as a random measure on Borel sets of $\Omega$. The main issue with the direct exponential of the field $X$ is the explosion of the kernel $K(z,w)$, namely that on the diagonal
\begin{equation*}
    \lim_{|z-w|\to 0}K(z,w)\to\infty,
\end{equation*}
meaning that $X$ can only be seen as a random generalized function (i.e. distribution in the sense of Schwartz). A natural workaround via regularization of the field $X$ and renormalization of the exponential was developed since Kahane~\cite{kahane1985chaos}.

There are nowadays several equivalent constructions and definitions of the classical Gaussian multiplicative chaos measure, e.g.~\cite{robert2010gaussian,SHAMOV20163224,Berestycki_2017,junnila2019decompositions}. A convenient construction for the purpose of this paper is via mollifying the Gaussian field $X$ and taking the limit of the renormalized exponentials of the regularized fields. We briefly summarize the procedure below and refer the readers to e.g.~\cite{Rhodes_2014} for additional background and details.

Let $X_{\epsilon}$ be a smooth Gaussian field approximation of the Gaussian field $X$. In this paper, it is enough to use the so-called circle-average approximation, where $X_{\epsilon}(z)$ is the average of $X$ over the Euclidean circle of center $z$ and radius $\epsilon$. Since the rest of this paper has a strong hyperbolic flavor, one can switch later to hyperbolic circles if we want to respect the hyperbolic nature, but it does not make any difference, since it is now a general result~\cite{kahane1985chaos,robert2010gaussian,SHAMOV20163224,Berestycki_2017,junnila2019decompositions} that the following limit, in the sense of weak convergence of measures, exists in probability and is independent of the regularization (as long as some mild regularity properties of the regularization are satisfied):
\begin{equation*}
    \lim_{\epsilon\to 0}\mu^{\gamma}_{\epsilon}(A)\coloneqq\lim_{\epsilon\to 0}\int_{A}e^{\gamma X_{\epsilon}(z)-\frac{\gamma^2}{2}\mathbb{E}[X_\epsilon(z)^2]}\sigma(dx)
\end{equation*}
for any measurable set $A\subset\Omega$ and any Radon measure $\sigma$. The limit measure, which we will denote by $\mu^{\gamma}$, is known as the Gaussian multiplicative chaos measure of the Gaussian field with parameter $\gamma>0$, associated with the log-correlated field $X$ (and with respect to the background measure $\sigma$). Kahane and many other authors later have shown that the limit measure is non-degenerate if and only if $\gamma\in(0,\sqrt{2d})$, and while this is the case, it has positive moment of order $p>0$ if and only if $p<\frac{2d}{\gamma^2}$. For more histories, backgrounds, proofs and applications, we recommend~\cite{Rhodes_2014,Berestycki_2017}.

We also note that the critical case $\gamma=\sqrt{2d}$ can be treated with refined renormalization procedures, all yielding the same limit measure, which is refered to as the critical Gaussian multiplicative chaos: since we will only consider this case in a separate article, we leave the curious reader to the recent review~\cite{powell2020critical}.

We sometimes refer to the parameter $\gamma$ as the ``coupling constant'' following terminologies from the physics literature.

\subsection{Gaussian multiplicative chaos measure reweighted by the hyperbolic metric}
In the study of Liouville conformal field theory with boundary, the suitable background metric for the relevant Gaussian multiplicative chaos measure is the natural hyperbolic metric (raised to a suitable power). We refer to~\cite[Section~2.4]{huang2018liouville} for a brief review of the probabilistic construction of the boundary Gaussian multiplicative chaos, and only gather the relevant information here.

More precisely, in the boundary Liouville conformal field theory, the relevant Gaussian multiplicative chaos is the one associated with the Gaussian free field with Neumann boundary conditions. In the half-plane model denoted by the index $\text{H}$ or $\mathbb{H}$, consider the Gaussian multiplicative chaos measure $\mu^{\gamma}$ associated with the Neumann kernel, the latter being a log-correlated kernel of the following form defined on some domain $\Omega\subset\mathbb{H}$, that we should always suppose without loss of generality contains the origin, i.e. $0\in\Omega$:
\begin{equation*}
    \forall z,w\in\Omega,\quad \mathbb{E}[X(z)X(w)]\eqqcolon K_{\text{N}}(z,w)=-\ln|z-w||z-\overline{w}|+g(z,w).
\end{equation*}
The exact correction term $g(z,w)$ can be calculated with the Green function associated to the Neumann boundary condition, but we don't need its exact expression here. The hyperbolic Gaussian multiplicative chaos measure is then defined via Equation~\eqref{eq:DefinitionBoundaryGMC}, which we recall here:
\begin{equation*}
    \forall A\subset\Omega,\quad \mu^{\gamma}_{\text{H}}(A)\coloneqq\int_{A}\text{Im}(z)^{-\frac{\gamma^2}{2}}\mu^{\gamma}(dz),
\end{equation*}
with $\mu^{\gamma}$ the classical Gaussian multiplicative chaos measure associated to the log-correlated Gaussian field $X$, whose definition is summerized in the previous section.

\begin{rema}[On the extended Seiberg bounds for the boundary Liouville conformal field theory]\label{rema:ShortPhysics}
We give a brief overview of origin of the extended Seiberg bounds in boundary Liouville conformal field theory and its connection to the moments of the hyperbolic Gaussian multiplicative chaos measure above. This remark can be safely skipped, as Theorem~\ref{th:Main1} can be understood as a purely probabilistic result.

The law of the boundary Liouville conformal field theory, in the unit disk (or equivalently on the half-plane via a standard conformal transformation), can be described in terms of the joint law of two random measures $(Z_0,Z_0^{\partial})$ where $Z_0$ is the ``bulk'' measure and the $Z_0^{\partial}$ the ``boundary'' measure. Especially, the bulk measure $Z_0$, up to some transformations and correction terms, is written as
\begin{equation*}
    dZ_0(d^2z)=e^{\gamma G(z)}d\mu^{\gamma}_{\text{H}}(d^2z)
\end{equation*}
with a singularity kernel $G(z)=\sum_{i}\alpha_i K(z_i,z)+\sum_{j}\beta_j K(s_j,z)$, induced by marked points $\{z_i;\alpha_i\}_{i}$ inside the disk and $\{s_j;\beta_j\}_{j}$ on the boundary. To be precise, one should do everything with the geometry of the unit disk to set the constants correct, but for simplicity we refer to~\cite[Section~3.6]{huang2018liouville} for details on the unit disk setting. The only relevant fact for us is that the correct bulk measure $\mu^{\gamma}_{\text{H}}$ is given by Equation~\eqref{eq:DefinitionBoundaryGMC}, reweighted possibly by some point-like singularities.

A notable and simple example is that the Gaussian multiplicative chaos expression for the boundary Liouville correlation functions in the unit volume case~\cite[Corollary~3.8]{huang2018liouville} has the following renormalization constant:
\begin{equation*}
    \mathcal{Z}\coloneqq\Gamma(s)\mathbb{E}[Z_0(\mathbb{D})^{-s}],
\end{equation*}
where $s=\frac{\sum_i\alpha_i+\frac{1}{2}\sum_j\beta_j-Q}{\gamma}$, where we set the bulk cosmological constant equal to $1$ (and the boundary cosmological constant to $0$) for simplicity.

We observe that, up to singularities given by special functions (here the Gamma function), the probabilistic obstruction for the existence of boundary Liouville correlation functions is a moment of the Gaussian multiplicative chaos measure, possibly with extra marked points. The extended Seiberg bound, in the probabilistic language of Gaussian multiplicative chaos, is thus equivalent to a combination of Theorem~\ref{th:Main1} and~\cite[Lemma~3.10]{david2016liouville}. More precisely, Theorem~\ref{th:Main1} of this current paper handles the case with the presence of the $1$-dimensional boundary singularity but no insertion points, while the argument~\cite[Lemma~3.10]{david2016liouville} can then be invoked for adding discrete $0$-dimensional singularities, i.e. the singularity kernel term $G$ above. For more on the physics background and different formulations of the theorem, we refer to \cite[Section~3.6]{huang2018liouville}. Especially, Theorem~\ref{th:Main1} implies that the conditions given in~\cite[Corollary~3.10]{huang2018liouville}, which are called the unit volume Seiberg bounds, are optimal under the Gaussian multiplicative chaos construction.
\end{rema}

\subsection{Exact boundary scaling kernel}
It is a well-known observation since Kahane~\cite{kahane1985chaos} that the existence of moments for the Gaussian multiplicative chaos measure does not depend on the correction term $g(z,w)$. This is quantified via Kahane's convexity inequality, which we recall in Lemma~\ref{lem:KahaneConvexity}. Consequently, for the purpose of this paper, it is enough (and more convenient) to work with the following so-called exact boundary scaling log-correlated kernel
\begin{equation}\label{eq:ExactScalingBoundaryKernel}
    \mathbb{E}[X(z)X(w)]=K_{\mathbb{H}}(z,w)=-\ln|z-w||z-\overline{w}|,
\end{equation}
where $\overline{w}$ is the complex conjugate of $w$ in $\mathbb{C}$. Kahane's convexity inequality yields that, for any correction term $g(z,w)$ bounded in absolute value, such that for any Carleson cube $Q\in\mathcal{Q}^{\ast}_{\mathbb{R}}$ and any $p\in\mathbb{R}$, the $p$-th moments
\begin{equation*}
    \mathbb{E}[\mu^{\gamma}_{\text{H}}(Q)^{p}]
\end{equation*}
associated with the two log-correlated Gaussian fields (one with correction term $g(z,w)$ and the other without) are comparable, up to some multiplicative constant $C$ depending only on $p,\gamma$ and $||g(z,w)||_{\infty}$, whenever any of these moments exists. See Lemma~\ref{lem:KahaneConvexity} for Kahane's convexity inequality and Corollary~\ref{coro:KahaneApplication} for a quick proof of this application.

Since we focus mainly on the existence of moments for Gaussian multiplicative chaos measures, in the rest of the paper, we work with the exact boundary scaling kernel of~\eqref{eq:ExactScalingBoundaryKernel} unless mentioned otherwise.

\subsection{Boundary scaling relation}
We now review some basic multifractal properties of the hyperbolic Gaussian multiplicative chaos measures with the exact boundary scaling kernel~\eqref{eq:ExactScalingBoundaryKernel}. In the sequel, we use
\begin{equation}\label{eq:ScalingZetaFactor}
    \overline{\zeta}(p)\coloneqq \left(2+\frac{\gamma^2}{2}\right)p-\gamma^2p^2
\end{equation}
to denote the boundary scaling exponent in the following lemma and afterwards.
\begin{lemm}[Hyperbolic exact scaling relation]\label{lem:HyperbolicScaling}
    Consider a domain $A\subset\Omega$ where $\Omega\subset\overline{\mathbb{H}}$ is a region containing the origin such that the covariance kernel~\eqref{eq:ExactScalingBoundaryKernel} is well-defined. For $0<r<1$, denote by $rA$ the domain of $\overline{\mathbb{H}}$ that is obtained by shrinking $A$ with respect to the origin,
    \begin{equation*}
        rA\coloneqq\{w\in\overline{\mathbb{H}}~;~\frac{1}{r}w\in A\}.
    \end{equation*}
    Then for any $p\in\mathbb{R}$ such that the moment $\mathbb{E}[\mu^{\gamma}_{\text{H}}(A)^{p}]$ exists, we have
    \begin{equation*}
        \mathbb{E}[\mu^{\gamma}_{\text{H}}(rA)^{p}]=r^{\overline{\zeta}(p)}\mathbb{E}[\mu^{\gamma}_{\text{H}}(A)^{p}],
    \end{equation*}
    where $\overline{\zeta}(p)$ is the scaling factor defined in~\eqref{eq:ScalingZetaFactor}.
\end{lemm}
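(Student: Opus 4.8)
The plan is to exploit the exact self-similarity of the scaling kernel~\eqref{eq:ExactScalingBoundaryKernel} under dilations about the origin. First I would record the elementary covariance identity
\begin{equation*}
    K_{\mathbb{H}}(rz,rw)=-\ln|rz-rw||rz-\overline{rw}|=K_{\mathbb{H}}(z,w)+2\ln(1/r),
\end{equation*}
valid for $0<r<1$ whenever $rz,rw$ lie in the domain of definition, using $\overline{rw}=r\overline{w}$ for real $r$. This constant shift of the covariance is the hallmark of exact scaling: it shows that, jointly in $z$,
\begin{equation*}
    \{X(rz)\}_{z}\overset{d}{=}\{X(z)+N_r\}_{z},
\end{equation*}
where $N_r$ is a centered Gaussian random variable of variance $2\ln(1/r)$, independent of the field $X$.

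Next I would transfer this to the circle-average regularizations, which interact well with dilations: the circle average at radius $\rho$ of the dilated field $z\mapsto X(rz)$ equals $X_{r\rho}(rz)$, so the identity above upgrades to the joint-in-$z$ statement $\{X_{r\rho}(rz)\}_{z}\overset{d}{=}\{X_{\rho}(z)+N_r\}_{z}$. I would then substitute $z=rw$ in the regularized measure $\mu^{\gamma}_{\text{H},r\rho}(rA)=\int_{rA}\text{Im}(z)^{-\frac{\gamma^2}{2}}e^{\gamma X_{r\rho}(z)-\frac{\gamma^2}{2}\mathbb{E}[X_{r\rho}(z)^2]}dz$. The change of variables produces three deterministic powers of $r$: a Jacobian $r^{2}$, a factor $r^{-\frac{\gamma^2}{2}}$ from $\text{Im}(rw)=r\,\text{Im}(w)$, and a factor $r^{\gamma^2}$ from the Wick renormalization, since $\mathbb{E}[X_{r\rho}(rw)^2]=\mathbb{E}[X_{\rho}(w)^2]+2\ln(1/r)$. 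The shift $N_r$, being constant in $w$, factors out of the integral as $e^{\gamma N_r}$, yielding the regularized scaling identity in law
\begin{equation*}
    \mu^{\gamma}_{\text{H},r\rho}(rA)\overset{d}{=}r^{2+\frac{\gamma^2}{2}}\,e^{\gamma N_r}\,\mu^{\gamma}_{\text{H},\rho}(A).
\end{equation*}

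Letting $\rho\to 0$, both sides converge in probability (hence in law) to the respective chaos measures with $N_r$ held fixed and independent, so the identity holds with $\mu^{\gamma}_{\text{H}}(rA)$ and $\mu^{\gamma}_{\text{H}}(A)$. Taking $p$-th moments and using independence of $N_r$ and $\mu^{\gamma}_{\text{H}}(A)$ gives
\begin{equation*}
    \mathbb{E}[\mu^{\gamma}_{\text{H}}(rA)^{p}]=r^{(2+\frac{\gamma^2}{2})p}\,\mathbb{E}[e^{p\gamma N_r}]\,\mathbb{E}[\mu^{\gamma}_{\text{H}}(A)^{p}].
\end{equation*}
The Gaussian moment generating function yields $\mathbb{E}[e^{p\gamma N_r}]=e^{\frac{1}{2}p^{2}\gamma^{2}\cdot 2\ln(1/r)}=r^{-\gamma^2 p^2}$, and collecting exponents reproduces $\overline{\zeta}(p)=(2+\frac{\gamma^2}{2})p-\gamma^2 p^2$. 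Since the Gaussian factor is finite for every real $p$, existence of the moment on the right forces existence on the left, matching the hypothesis of the lemma and covering negative $p$ as well.

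The bookkeeping of the powers of $r$ is routine; the point requiring care is the rigorous justification of the joint-in-law identity $\{X(rz)\}_{z}\overset{d}{=}\{X(z)+N_r\}_{z}$ at the level of the mollified fields and the interchange of the limit $\rho\to 0$ with the scaling relation. This is where I expect the main, albeit standard, technical effort, together with checking that $rA\subset\Omega$ for $0<r<1$ so that the exact kernel~\eqref{eq:ExactScalingBoundaryKernel} remains a valid covariance throughout; the latter is ensured by the shrinking toward the origin, which lies in $\Omega$.
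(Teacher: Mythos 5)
Your proposal is correct and follows essentially the same route as the paper: the constant covariance shift $K_{\mathbb{H}}(rz,rw)=K_{\mathbb{H}}(z,w)-2\ln r$, the resulting identity in law between the dilated regularized field and the field plus an independent Gaussian of variance $-2\ln r$, the change of variables collecting the Jacobian, the $\mathrm{Im}$-weight and the Wick-ordering factors, and the Gaussian moment generating function producing $r^{-\gamma^2p^2}$, before passing to the $\rho\to 0$ limit. The exponent bookkeeping matches the paper's exactly.
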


Before entering the proof, let us introduce properly the hyperbolic circle-average that is somewhat more adapted to the boundary Gaussian multiplicative chaos. For $z\in\mathbb{H}$, let $B(z,\epsilon)$ be the hyperbolic circle of radius $\epsilon$, and let $X_{\epsilon}(z)$ be the average of the log-correlated field $X$ over the circle $B(z,\epsilon)$ weighted by the hyperbolic metric. This regularization has the advantage that $X_{\epsilon}(z)$ is defined simultaneously for all $z\in\mathbb{H}$, while with the usual Euclidean circle-average, we have to restrict ourselves to the set that is $\epsilon$-away from the boundary $\mathbb{R}=\partial\mathbb{H}$. In practice this makes no difference, but we feel healthy to fix this convention for the rest of the article to avoid possible confusions.

\begin{proof}
    For $z,w\in A$, we have that $rz,rw\in rA$ and
    \begin{equation*}
        K(rz,rw)=-2\ln r+K(z,w).
    \end{equation*}
    One verifies, by standard calculation on hyperbolic distances, the following equality in law for the regularized Gaussian fields:
    \begin{equation*}
        \{X_{\epsilon}(z)+\sqrt{-2\ln r}N\}_{z\in A}\stackrel{\text{(law)}}{=}\{X_{r\epsilon}(rz)\}_{z\in A},
    \end{equation*}
    where $N$ is an independent standard Gaussian variable. It follows that
    \begin{equation*}
    \begin{split}
        \mathbb{E}[\mu^{\gamma}_{\text{H},r\epsilon}(rA)^{p}]&=\mathbb{E}\left[\left(\int_{rA}e^{\gamma X_{r\epsilon}(z)-\frac{\gamma^2}{2}\mathbb{E}[X_{r\epsilon}(z)^2]}\text{Im}(z)^{-\frac{\gamma^2}{2}}d^2z\right)^{p}\right]\\
        &=\mathbb{E}\left[\left(\int_{A}e^{\gamma X_{\epsilon}(z)-\frac{\gamma^2}{2}\mathbb{E}[X_{\epsilon}(z)^2]}e^{\gamma \sqrt{-2\ln r}N-\frac{\gamma^2}{2}\mathbb{E}[(\sqrt{-2\ln r}N)^2]}\text{Im}(rz)^{-\frac{\gamma^2}{2}}d^2(rz)\right)^{p}\right]\\
        &=r^{-\gamma^2p^2+\gamma^2 p}\cdot r^{-\frac{\gamma^2}{2}p+2p}\cdot\mathbb{E}\left[\left(\int_{A}e^{\gamma X_{\epsilon}(z)-\frac{\gamma^2}{2}\mathbb{E}[X_{\epsilon}(z)^2]}\text{Im}(z)^{-\frac{\gamma^2}{2}}d^2z\right)^{p}\right]\\
        &=r^{\overline{\zeta}(p)}\mathbb{E}[\mu^{\gamma}_{\text{H},\epsilon}(A)^{p}],
    \end{split}
    \end{equation*}
    with $\overline{\zeta}(p)$ as in~\eqref{eq:ScalingZetaFactor}. The claim follows then from approximation and passing to the $\epsilon\to0$ limit.
\end{proof}

The following check is elementary:
\begin{prop}[Sign-change of the hyperbolic scaling exponent]\label{prop:TrivialProposition}
    Let $\gamma\in(0,2)$. The function $p\mapsto 1-\overline{\zeta}(p)$ is strictly negative if and only if $\frac{1}{2}<p<\frac{2}{\gamma^2}$.
\end{prop}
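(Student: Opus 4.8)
The plan is to read off the sign of $p \mapsto 1-\overline{\zeta}(p)$ from the fact that it is an upward-opening quadratic in $p$ with two explicit roots. First I would substitute the definition~\eqref{eq:ScalingZetaFactor} and rearrange to obtain
\begin{equation*}
    1-\overline{\zeta}(p)=\gamma^2 p^2-\left(2+\frac{\gamma^2}{2}\right)p+1.
\end{equation*}
The leading coefficient $\gamma^2$ is strictly positive (indeed $\gamma\in(0,2)$ gives $0<\gamma^2<4$), so this is a parabola opening upward. A quadratic with positive leading coefficient is strictly negative precisely on the open interval strictly between its two real roots, provided those roots are real and distinct; so the whole problem reduces to locating the roots and checking that they are distinct.

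Next I would guess and verify the roots directly, rather than invoking the quadratic formula. Substituting $p=\tfrac12$ gives $\tfrac{\gamma^2}{4}-\left(2+\tfrac{\gamma^2}{2}\right)\tfrac12+1=\tfrac{\gamma^2}{4}-1-\tfrac{\gamma^2}{4}+1=0$, and substituting $p=\tfrac{2}{\gamma^2}$ gives $\tfrac{4}{\gamma^2}-\left(2+\tfrac{\gamma^2}{2}\right)\tfrac{2}{\gamma^2}+1=\tfrac{4}{\gamma^2}-\tfrac{4}{\gamma^2}-1+1=0$. Hence the quadratic factors as
\begin{equation*}
    1-\overline{\zeta}(p)=\gamma^2\left(p-\frac{1}{2}\right)\left(p-\frac{2}{\gamma^2}\right),
\end{equation*}
and since the product of the two roots equals $1/\gamma^2$ and their sum equals $\tfrac{2}{\gamma^2}+\tfrac{\gamma^2}{4}$, one checks the factorization is exhaustive (alternatively one simply expands the right-hand side and matches coefficients).

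Finally I would order the roots: because $\gamma^2<4$ we have $\tfrac{2}{\gamma^2}>\tfrac12$, so the two roots are distinct and $\tfrac12<\tfrac{2}{\gamma^2}$. From the upward-opening sign analysis, $1-\overline{\zeta}(p)<0$ holds exactly for $\tfrac12<p<\tfrac{2}{\gamma^2}$, which is the claim. The statement is genuinely elementary and I do not expect a real obstacle; the only point deserving care is that the strict inequality $\gamma<2$ is exactly what keeps the two roots distinct and the interval nonempty (at the critical value $\gamma=2$ the roots would collide at $p=\tfrac12$ and the function would be everywhere nonnegative), so the hypothesis $\gamma\in(0,2)$ is used precisely at the ordering step.
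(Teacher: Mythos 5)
Your proof is correct and matches the paper's treatment: the paper declares this check elementary and, in Section~\ref{sec:Strategy}, simply solves the quadratic $1-\overline{\zeta}(p)=\gamma^2p^2-(2+\frac{\gamma^2}{2})p+1=0$ to find the roots $p=\frac{1}{2}$ and $p=\frac{2}{\gamma^2}$, exactly as you do. Your verification of the factorization and the observation that $\gamma^2<4$ orders the roots and makes the interval nonempty is the same argument, spelled out in full.
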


\section{Setup and strategy}\label{sec:Strategy}

\subsection{Setup and heuristics for the threshold}
Before presenting the details of the proof, we give an overview of the strategy. The key idea is to divide $Q_r=Q_{[-r,r]}$ into three parts \`{a} la Whitney, in view of applying the boundary scaling relation of Lemma~\ref{lem:HyperbolicScaling}. The decomposition is illustrated in Figure~\ref{fig:LowerParts}.

\begin{figure}[h]
\centering
\includegraphics[height=15em]{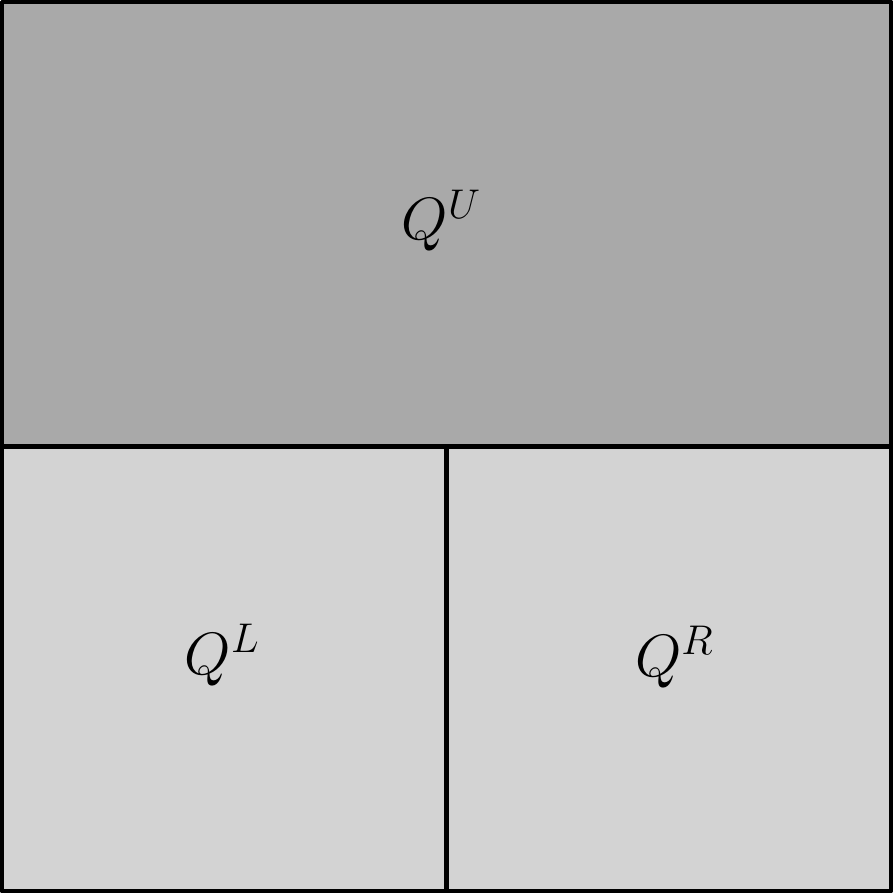}
\caption{Dividing a Carleson cube into three parts \`{a} la Whitney.}
\label{fig:LowerParts}
\end{figure}

More precisely, with ``$\text{L}, \text{R}, \text{U}$'' respectively stands for ``left, right, upper'', write $Q^{\text{L}}_r=Q_{[-r,0]}$, $Q^{\text{R}}_r=Q_{[0,r]}$ and $Q^{\text{U}}=Q_{[-r,r]}\setminus(Q^{\text{L}}_r\cup Q^{\text{R}}_{r})$, so that
\begin{equation}
    Q_r=Q^{\text{L}}_r\cup Q^{\text{R}}_r\cup Q^{\text{U}}_r
\end{equation}
is a partition of $Q_r$ into smaller rectangles (to be precise, we ignore intersections of these rectangles as the Gaussian multiplicative chaos measure is almost surely null there).

\medskip

Several preliminary observations are in order:
\begin{enumerate}
    \item The moments of any of the lower parts (e.g. of $Q^{\text{L}}_r$) are related to that of $Q_r$, by means of the boundary scaling relation of Lemma~\ref{lem:HyperbolicScaling}.
    \item The Gaussian multiplicative chaos measures of the lower parts behave almost independently (if we treat the region where they meet appropriately), since the underlying Gaussian fields are log-correlated and does not exhibit long-range correlations;
    \item The Gaussian multiplicative chaos measure of the upper part $Q^{\text{U}}_r$ is not responsible for the explosion of moment: indeed, it behaves as a classical Gaussian multiplicative chaos without singularity and should have higher moment bound.
\end{enumerate}
These observations serve as the basis for the proof of the extended Seiberg bound in the case $\gamma\in(0,\sqrt{2})$. We are lead to the following heuristic calculation when $p$ is close to the moment bound threshold:
\begin{equation*}
    \mathbb{E}[\mu^{\gamma}_{\text{H}}(Q_r)^{p}]\approx\mathbb{E}[\mu^{\gamma}_{\text{H}}(Q^{\text{L}}_r+Q^{\text{R}}_r)^{p}]\stackrel{(\star)}{\approx}\mathbb{E}[\mu^{\gamma}_{\text{H}}(Q^{\text{L}}_r)^{p}]+\mathbb{E}[\mu^{\gamma}_{\text{H}}(Q^{\text{R}}_r)^{p}]\approx 2^{1-\overline{\zeta}(p)}\mathbb{E}[\mu^{\gamma}_{\text{H}}(Q^{\text{U}}_r)^{p}].
\end{equation*}
The first approximation results from the last item in the observations above, and the last approximation from the first item. The technical difficulty is to rigorously quantify the approximation $(\star)$ in the middle of the above heuristic calculation. Indeed, if the random variables $Q^{\text{L}}_r$ and $Q^{\text{R}}_r$ were highly correlated -- think of the extreme case where they are identical -- then the approximation $(\star)$ would come with an extra multiplicative factor $2^{p}$. That is, we should obtain a Jensen-type estimate in the highly correlated case. Based on the middle item of the observations prior to $(\star)$, we should expect the emergence of a totally opposite behavior near the critical moment bound, resulting in the additive-type estimate in the direction of $(\star)$. We will use a combination of Gaussian decorrelation inequalities, elementary inequalities, some tail estimates together with a tailored combinatorial Sokoban lemma in order to show that, as $p$ is close to the critical moment bound $p_c$, the approximation $(\star)$ is almost satisfied up to small corrections.

In the case with large coupling constant $\gamma\in[\sqrt{2},2)$, the above heuristic is still our guiding philosophy, although the technical details are different. This is in part because of the following reasons, all of which result from the peculiar property of the boundary Gaussian multiplicative chaos, which now fails to possess a finite first moment. First, we lose the Banach space structure of $L^{p}$ spaces for $p>1$. Second, it turns out that, due to the duality between the subadditive/superadditive-type estimates, the easy part in the proof for small coupling constant $\gamma\in(0,\sqrt{2})$ becomes the difficult part in the proof for large coupling constant $\gamma\in[\sqrt{2},2)$, and vice versa. Lastly, the meticulous manipulations of~\cite{kahane1976certaines}, based on the recurrence scheme over $p\in]k,k+1]$ for integer $k\geq 1$ cannot be applied, now that we don't have the $p=1$ case to initiate our induction.

\medskip

In spite of these technical difficulties to justify our scheme, if we admit these approximations for now, we observe that the system goes through phase transition at thresholds $p$ satisfying the equation given by Proposition~\ref{prop:TrivialProposition},
\begin{equation*}
    0=1-\overline{\zeta}(p)=1-(2+\frac{\gamma^2}{2})p+\gamma^2 p^2.
\end{equation*}
Solving this yields two solutions: $p=\frac{1}{2}$ or $p=\frac{2}{\gamma^2}$, the latter being the predicted extended Seiberg bound for the boundary Liouville conformal field theory in~\cite{huang2018liouville}, which is restated in the form of Theorem~\ref{th:Main1}.

\begin{rema}
Naturally, the techniques in the paper provide an alternative and self-contained proof to Kahane's positive moment bound on the classical Gaussian multiplicative chaos. The main input of this paper is to provide a framework to treat moment bound problems for Gaussian multiplicative chaos with arbitrary singularities of higher dimensions and with more general symmetries, which is not investigated in the classical theory. Our method is general enough to treat singularities of any higher (co)-dimension, but for simplicity of the presentation we mainly focus on the disk model now, i.e. $1$-dimensional singularity embedded in a $2$-dimensional manifold, and give details to the general case in a future work.
\end{rema}

\subsection{Structure of the proof}
We briefly describe the structure of the rest of this article.

We start by separating two regimes: the regime for small coupling constant $\gamma\in(0,\sqrt{2})$, where the expectation of $\mu^{\gamma}_{\text{H}}(Q)$ is finite for any Carleson cube $Q$; and the regime for large coupling constant $\gamma\in[\sqrt{2},2)$, where the expectation of $\mu^{\gamma}_{\text{H}}(Q)$ explodes for any Carleson cube $Q$.

In both of these regimes, in order to show that the bound in Theorem~\ref{th:Main1} is optimal, we have to prove two directions. The subadditive inequality exploited in Section~\ref{sec:Subadditivity} deals with one direction in the regime for large $\gamma\in[\sqrt{2},2)$, while its dual counterpart, the superadditive inequality exploited in Section~\ref{sec:Superadditivity}, deals with one direction in the regime for small $\gamma\in(0,\sqrt{2})$.

The main technical difficulties of the proofs are on proving the optimality of the bounds above, i.e. proving the converses of the above directions in respective regimes. The sufficient condition for the existence of boundary Liouville correlation functions in the regime for small $\gamma\in(0,\sqrt{2})$ is the most physically relevant one. This part is previously treated in an unpublished appendix of~\cite{huang2018liouville}, by using a combination of Gaussian decorrelation inequalities and combinatorial observations. We present a polished, pedagogical version of this proof in Section~\ref{sec:DecorrelationPlusSokoban}.

The last piece of the puzzle, in order to establishing the full optimality of the extended Seiberg bound for boundary Liouville conformal field theory, is the necessary condition for the existence of boundary Liouville correlation functions in the regime for large coupling constant $\gamma\in[\sqrt{2},2)$. This is the main contribution of this paper, with several extra difficulties coming from the lack of a finite first moment, since the expectation of $\mu^{\gamma}_{\text{H}}(Q)$ explodes for any Carleson cube $Q$. One important ingredient is to show that the moment of $\mu^{\gamma}_{\text{H}}(Q)$ near the critical moment bound cannot be too small, by means of a no exponential decay lemma at the critical threshold. We present the relevant proof in Section~\ref{sec:HardPart}.

Finally, for better readability, some technical estimates related to Gaussian decorrelation techniques are gathered and detailed in Section~\ref{sec:TechnicalEstimates}.

\section{Proof of the main theorem for small moments}
We start with the unconventional case $\gamma\in[\sqrt{2},2)$, where the first moment of the hyperbolic Gaussian multiplicative chaos measure is infinite:
\begin{equation}\label{eq:ExplosionFirstMoment}
    \forall Q\in\mathcal{Q}^{*}_{\mathbb{R}}, \quad \mathbb{E}\left[\mu_{\text{H}}(Q)\right]=\mathbb{E}\left[\int_{Q}\text{Im}(z)^{-\frac{\gamma^2}{2}}d^2z\right]=\infty.
\end{equation}
We will show that, for any $Q\in\mathcal{Q}^{*}_{\mathbb{R}}$,
\begin{enumerate}
    \item The random measure $\mu_{\text{H}}(Q)$ is almost surely non-trivial;
    \item The random measure $\mu_{\text{H}}(Q)$ has finite $\frac{1}{2}$-moment for all $\gamma\in[\sqrt{2},2)$;
    \item The random measure $\mu_{\text{H}}(Q)$ has finite $p$-th moment if and only if $p<\frac{2}{\gamma^2}$.
\end{enumerate}
The statement in the last item is the extended Seiberg bound for the boundary Liouville conformal field theory in the unit disk, stated in the form of Theorem~\ref{th:Main1} (here with $\gamma\in[\sqrt{2},2)$). Previously in an unpublished version of~\cite{huang2018liouville}, only the sufficient condition (i.e. the ``if'' part) of the last item was proven. As mentioned in the introduction, this sufficient condition was proven to be a useful ingredient in many applications in boundary Liouville conformal field theory, but the optimality of this bound was left unsettled. In this section, we establish the converse statement (i.e. the ``only if'' part of the last item above).

\subsection{Existence of small moments}\label{sec:Subadditivity}
We first establish the existence of small moments, which also guarantees the non-trivialness of the measure near the boundary.
\begin{lemm}[Finiteness of small positive moments]\label{lem:ExistenceHalfMoment}
    Let $\gamma\in[\sqrt{2},2)$ and consider $\mu_{\text{H}}(Q)$ with $Q\in\mathcal{Q}^{*}_{\mathbb{R}}$ as above. Then for any $p<\frac{2}{\gamma^2}$, the moment
    \begin{equation*}
        \mathbb{E}[\mu_{\text{H}}(Q)^{p}]
    \end{equation*}
    exists and is finite.
\end{lemm}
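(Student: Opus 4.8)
The plan is to exploit the subadditivity of $x\mapsto x^{p}$ for $p\in(0,1)$. For $\gamma\in[\sqrt{2},2)$ the threshold $\frac{2}{\gamma^{2}}$ lies in $(\frac12,1]$, so every admissible exponent satisfies $p<1$ and subadditivity is available throughout this regime; this is precisely what makes ``subadditivity'' the right tool here. Combined with the exact hyperbolic scaling of Lemma~\ref{lem:HyperbolicScaling} and the horizontal translation invariance of the kernel~\eqref{eq:ExactScalingBoundaryKernel} (both $|z-w|$ and $|z-\overline{w}|$, as well as the weight $\text{Im}(z)^{-\gamma^{2}/2}$, are invariant under $z\mapsto z+t$, $t\in\mathbb{R}$), the problem reduces to summing a geometric series whose convergence is governed exactly by the sign of $1-\overline{\zeta}(p)$. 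Since any $Q_{[a,b]}\in\mathcal{Q}^{*}_{\mathbb{R}}$ is a horizontal translate of $(b-a)\,Q_{[-1/2,1/2]}$, translation invariance together with Lemma~\ref{lem:HyperbolicScaling} gives $\mathbb{E}[\mu_{\text{H}}(Q_{[a,b]})^{p}]=(b-a)^{\overline{\zeta}(p)}\,\mathbb{E}[\mu_{\text{H}}(Q_{[-1/2,1/2]})^{p}]$, so it suffices to treat the unit cube $Q\coloneqq Q_{[-1/2,1/2]}$.

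First I would iterate the Whitney decomposition $Q_{r}=Q^{\text{L}}_{r}\cup Q^{\text{R}}_{r}\cup Q^{\text{U}}_{r}$ of Figure~\ref{fig:LowerParts}. Repeatedly splitting each of the two lower cubes produces a countable partition $Q=\bigcup_{k\geq 0}\bigcup_{j=1}^{2^{k}}U_{k,j}$ (up to $\mu_{\text{H}}$-null overlaps, which carry no mass), where at level $k$ there are $2^{k}$ ``upper'' rectangles $U_{k,j}$, each a horizontal translate of the shrunken copy $2^{-k}U$ of the fixed upper rectangle $U\coloneqq Q^{\text{U}}_{1/2}$. The crucial geometric feature is that every $U_{k,j}$ sits at hyperbolic height bounded away from the boundary $\mathbb{R}$, which is exactly the content of the third preliminary observation in Section~\ref{sec:Strategy}.

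For the base estimate, on $U$ the weight $\text{Im}(z)^{-\gamma^{2}/2}$ and the boundary term $-\ln|z-\overline{w}|$ are both bounded, so $\mu_{\text{H}}$ restricted to $U$ compares, via Kahane's convexity inequality (Lemma~\ref{lem:KahaneConvexity}), to a classical two-dimensional Gaussian multiplicative chaos. The classical bound recalled in Section~\ref{sec:Background} yields finiteness of all moments of order $p<\frac{4}{\gamma^{2}}$; since $p<\frac{2}{\gamma^{2}}<\frac{4}{\gamma^{2}}$, we obtain $U_{0}\coloneqq\mathbb{E}[\mu_{\text{H}}(U)^{p}]<\infty$. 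By Lemma~\ref{lem:HyperbolicScaling} together with translation invariance, $\mathbb{E}[\mu_{\text{H}}(U_{k,j})^{p}]=2^{-k\overline{\zeta}(p)}U_{0}$. Assembling by subadditivity (valid as $p<1$) and Tonelli,
\begin{equation*}
    \mathbb{E}[\mu_{\text{H}}(Q)^{p}]\leq\sum_{k\geq 0}\sum_{j=1}^{2^{k}}\mathbb{E}[\mu_{\text{H}}(U_{k,j})^{p}]=U_{0}\sum_{k\geq 0}2^{k(1-\overline{\zeta}(p))},
\end{equation*}
which converges precisely when $1-\overline{\zeta}(p)<0$, i.e. by Proposition~\ref{prop:TrivialProposition} when $\frac12<p<\frac{2}{\gamma^{2}}$. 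This settles the range $p\in(\frac12,\frac{2}{\gamma^{2}})$, and the remaining range $p\leq\frac12$ follows for free from the elementary monotonicity $\mathbb{E}[Y^{p}]\leq 1+\mathbb{E}[Y^{p'}]$ for nonnegative $Y$ and $p<p'$, applied with any fixed $p'\in(\frac12,\frac{2}{\gamma^{2}})$.

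The only genuinely nontrivial input is the base estimate $U_{0}<\infty$ and its exact scaling across levels: one must confirm that, away from the boundary, the hyperbolic measure inherits the classical moment bound $p<\frac{4}{\gamma^{2}}$, and that the scaled pieces $U_{k,j}$ all obey the same scaling law $2^{-k\overline{\zeta}(p)}U_{0}$. This is where the combination of Kahane's convexity inequality (to discard the bounded boundary correction and the bounded $\text{Im}$-weight) and the exact hyperbolic scaling relation does the real work. Everything else — the geometric bookkeeping of the Whitney tower and the identification of the convergence threshold — is then an immediate consequence of Proposition~\ref{prop:TrivialProposition}.
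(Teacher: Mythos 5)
Your argument for $p\in(\tfrac12,\tfrac{2}{\gamma^2})$ is essentially the paper's own proof: the same iterated Whitney decomposition of $Q$ into $2^{k}$ translates of $2^{-k}Q^{\text{U}}$, the same base estimate on the upper rectangle by comparison with a classical chaos (positive moments up to order $\tfrac{4}{\gamma^2}$), and the same combination of the exact scaling relation with subadditivity, reducing everything to the geometric series governed by the sign of $1-\overline{\zeta}(p)$. The one genuine flaw is your treatment of the ``remaining range'': the lemma asserts finiteness for \emph{all} $p<\tfrac{2}{\gamma^2}$, including $p<0$, and the inequality $\mathbb{E}[Y^{p}]\leq 1+\mathbb{E}[Y^{p'}]$ is false for $p<0$ (take $Y$ small: $Y^{p}$ blows up while $1+Y^{p'}$ stays bounded), so your monotonicity trick only covers $0\leq p\leq\tfrac12$. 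For negative moments the correct (and easy) route, which is the one the paper takes, is monotonicity of the measure rather than of the exponent: $\mu_{\text{H}}(Q)\geq\mu_{\text{H}}(Q^{\text{U}})$ gives $\mu_{\text{H}}(Q)^{p}\leq\mu_{\text{H}}(Q^{\text{U}})^{p}$ for $p<0$, and the classical chaos $\mu_{\text{H}}(Q^{\text{U}})$ has finite negative moments of every order.
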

\begin{proof}
    Without loss of generality, take $Q=Q_{r}=Q_{[-r,r]}$ and consider the Whitney-type decomposition explained in Section~\ref{sec:Strategy}, with $Q^{\text{L}}_r, Q^{\text{R}}_r$ the smaller boundary Carleson cubes and $Q^{\text{U}}_r$ their complement in the upper half of $Q_{r}$.

    The statement for the negative moments $p<0$ is classical: it suffices to notice that, since the measure $\mu_{\text{H}}(Q^{\text{U}}_r)$ is a classical Gaussian multiplicative chaos and $\gamma<2$, it has negative moments of all order~\cite[Theorem~2.12]{Rhodes_2014}. It follows then $\mu_{\text{H}}(Q_r)$ has negative moments of all order.

    We next consider positive moments of $\mu_{\text{H}}(Q_r)$ for $p$ close to $\frac{2}{\gamma^2}$. So let $\frac{1}{2}<p<\frac{2}{\gamma^2}$. We can keep on iterating the Whitney decomposition for the smaller boundary Carleson cubes and their children: we obtain a partition of $Q_r$ by a family of rectangles of the same shape as $Q^{\text{U}}_r$, but with different sizes. More precisely, there will be $2^{n}$ such rectangles of size $2^{-n}Q^{\text{U}}_r$ in the above partition. See Figure~\ref{fig:InfiniteWhitney} for a graphical representation.

\begin{figure}[h]
\centering
\includegraphics[height=15em]{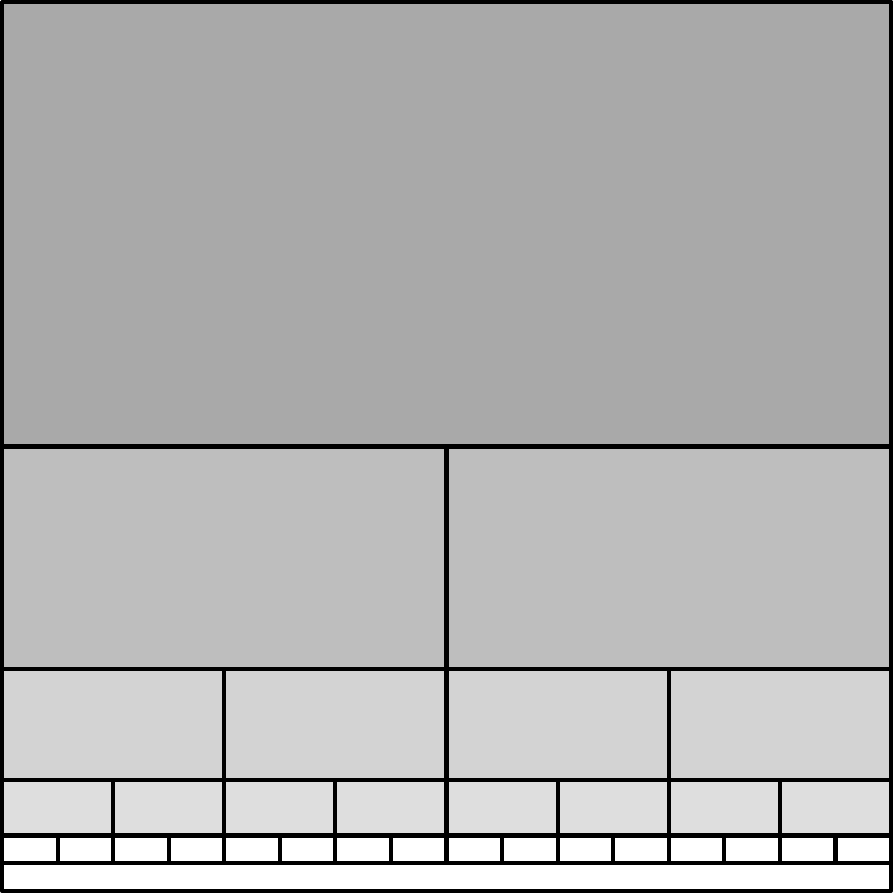}
\caption{Dividing a Carleson cube into infinitely many smaller rectangles.}
\label{fig:InfiniteWhitney}
\end{figure}

    By the boundary scaling relation of Lemma~\ref{lem:HyperbolicScaling}, such a rectangle of size $2^{-n}Q^{\text{U}}_r$ has $p$-th moment equal to $2^{-n\overline{\zeta}(p)}\mathbb{E}[\mu_{\text{H}}(Q^{\text{U}}_r)^{p}]$. Therefore, by subadditivity, we can control the $p$-th moment of $\mu_{\text{H}}(Q_r)$ by the sum of those moments over the rectangles in the above partition:
    \begin{equation*}
        \mathbb{E}[\mu_{\text{H}}(Q_r)^{p}]\leq\sum\limits_{n=0}^{\infty}2^{n}\cdot2^{-n\overline{\zeta}(p)}\mathbb{E}[\mu_{\text{H}}(Q^{\text{U}}_r)^{p}]=\frac{1}{1-2^{1-\overline{\zeta}(p)}}\mathbb{E}[\mu_{\text{H}}(Q^{\text{U}}_r)^{p}].
    \end{equation*}
    The last term is finite when $\frac{1}{2}<p<\frac{2}{\gamma^2}$ by Proposition~\ref{prop:TrivialProposition}. We also used the fact that the random variable $\mu_{\text{H}}(Q^{\text{U}}_r)$ has positive moments up to order just below $\frac{4}{\gamma^2}$, as a classical Gaussian multiplicative chaos measure~\cite[Theorem~2.11]{Rhodes_2014}. This finishes the proof.
\end{proof}

\begin{coro}[Finiteness of half moment]\label{coro:SmallerThanHalfMoments}
    For any Carleson cube $Q\subset\mathbb{H}$ and any $\gamma\in(0,2)$, the half moment $\mathbb{E}[\mu_{\text{H}}(Q)^{1/2}]$ is always finite.
\end{coro}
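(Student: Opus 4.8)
The plan is to exploit that $p=\frac12$ sits \emph{strictly} below the upper threshold $\frac{2}{\gamma^2}$ for every $\gamma\in(0,2)$ (indeed $\frac12<\frac{2}{\gamma^2}$ is equivalent to $\gamma^2<4$), and to obtain the endpoint exponent $p=\frac12$ by interpolating between a negative moment and a positive moment taken \emph{strictly} above $\frac12$, rather than by running the subadditive Whitney iteration of Lemma~\ref{lem:ExistenceHalfMoment} directly at $p=\frac12$. As usual, Kahane's convexity inequality lets us work with the exact boundary scaling kernel, and by translation it suffices to treat a cube $Q=Q_{[a,b]}\in\mathcal{Q}^{*}_{\mathbb{R}}$.

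First I would dispose of the regime $\gamma\in(0,\sqrt2)$, where the computation is immediate. Here the deterministic integral $\mathbb{E}[\mu_{\text{H}}(Q)]=\int_{Q}\text{Im}(z)^{-\frac{\gamma^2}{2}}\,d^2z$ converges, since $\frac{\gamma^2}{2}<1$, so the first moment is finite. Jensen's inequality applied to the concave map $x\mapsto x^{1/2}$ then yields $\mathbb{E}[\mu_{\text{H}}(Q)^{1/2}]\leq(\mathbb{E}[\mu_{\text{H}}(Q)])^{1/2}<\infty$.

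The substantive case is $\gamma\in[\sqrt2,2)$, where the first moment is infinite by~\eqref{eq:ExplosionFirstMoment}. Since $\frac12<\frac{2}{\gamma^2}$ the open interval $(\frac12,\frac{2}{\gamma^2})$ is nonempty, so I would fix some $p_0$ inside it. By Lemma~\ref{lem:ExistenceHalfMoment} the moment $\mathbb{E}[\mu_{\text{H}}(Q)^{p_0}]$ is finite, and the same lemma (through the classical upper cube $Q^{\text{U}}_r$) gives finiteness of a negative moment, say $\mathbb{E}[\mu_{\text{H}}(Q)^{-1}]$. The endpoint then follows from the elementary pointwise bound $y^{1/2}\leq y^{-1}+y^{p_0}$ valid for all $y>0$ (split according to $y\leq1$ or $y>1$), which after taking expectations gives $\mathbb{E}[\mu_{\text{H}}(Q)^{1/2}]\leq\mathbb{E}[\mu_{\text{H}}(Q)^{-1}]+\mathbb{E}[\mu_{\text{H}}(Q)^{p_0}]<\infty$.

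The one point requiring care, and the reason this is stated as a separate corollary rather than absorbed into Lemma~\ref{lem:ExistenceHalfMoment}, is that the subadditive estimate of that lemma degenerates exactly at $p=\frac12$: by Proposition~\ref{prop:TrivialProposition} one has $1-\overline{\zeta}(\frac12)=0$, so the geometric ratio $2^{1-\overline{\zeta}(p)}$ equals $1$ and the summation $\frac{1}{1-2^{1-\overline{\zeta}(p)}}$ diverges. The interpolation above sidesteps this degeneracy cleanly, since it only ever invokes moments at exponents bounded away from the threshold; this is exactly why no direct Whitney iteration at the endpoint is needed, and it is the main (if modest) obstacle to address.
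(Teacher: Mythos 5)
Your proposal is correct and follows essentially the same route as the paper, which deduces the corollary directly from Lemma~\ref{lem:ExistenceHalfMoment} (whose written proof covers negative moments and exponents in $(\tfrac12,\tfrac{2}{\gamma^2})$) together with the finiteness of the first moment~\eqref{eq:FiniteFirstMoment} when $\gamma\in(0,\sqrt2)$. Your explicit interpolation $y^{1/2}\leq y^{-1}+y^{p_0}$ to reach the endpoint $p=\tfrac12$, where the geometric ratio $2^{1-\overline{\zeta}(p)}$ in the Whitney iteration degenerates to $1$, is exactly the elementary step the paper leaves implicit.
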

\begin{coro}[Non-trivialness of the boundary measure]\label{coro:NonDegenerateMeasure}
    For any Carleson cube $Q\subset\mathbb{H}$ and any $\gamma\in(0,2)$, the random measure $\mu_{\text{H}}(Q)$ is almost surely well-defined and non-trivial, i.e. $0<\mu_{\text{H}}(Q)<\infty$ almost surely.
\end{coro}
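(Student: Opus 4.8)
The plan is to establish the two one-sided bounds $\mu_{\text{H}}(Q)<\infty$ and $\mu_{\text{H}}(Q)>0$ separately, since they rely on different inputs. The upper bound is essentially a soft consequence of the moment estimate already in hand, while the lower bound is a localization of the classical non-degeneracy statement to a region away from the boundary.

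For the finiteness $\mu_{\text{H}}(Q)<\infty$, I would simply invoke Corollary~\ref{coro:SmallerThanHalfMoments}, which gives $\mathbb{E}[\mu_{\text{H}}(Q)^{1/2}]<\infty$ for every Carleson cube and every $\gamma\in(0,2)$. Since $\mu_{\text{H}}(Q)\geq 0$ is a nonnegative random variable with a finite fractional moment, it is almost surely finite (a variable with a finite moment cannot be infinite on a set of positive probability). This also makes precise the claim that $\mu_{\text{H}}(Q)$ is \emph{well-defined}: the weighted integral defining it converges almost surely, in spite of the blow-up of the weight $\text{Im}(z)^{-\gamma^2/2}$ as $z$ approaches $\partial\mathbb{H}$.

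For the positivity $\mu_{\text{H}}(Q)>0$, the idea is to restrict attention to a sub-region bounded away from the boundary $\mathbb{R}=\partial\mathbb{H}$, where the hyperbolic reweighting is harmless, and then appeal to non-degeneracy of the subcritical classical chaos. Taking $Q=Q_r$ and its upper part $Q^{\text{U}}_r=[-r,r]\times[r,2r]$ from the Whitney decomposition of Section~\ref{sec:Strategy}, one has $\text{Im}(z)\in[r,2r]$ throughout $Q^{\text{U}}_r$, so that $\text{Im}(z)^{-\gamma^2/2}\geq(2r)^{-\gamma^2/2}>0$ there. By positivity and monotonicity of the measure,
\[
\mu_{\text{H}}(Q)\;\geq\;\mu_{\text{H}}(Q^{\text{U}}_r)\;\geq\;(2r)^{-\frac{\gamma^2}{2}}\,\mu^{\gamma}(Q^{\text{U}}_r),
\]
so it suffices to show $\mu^{\gamma}(Q^{\text{U}}_r)>0$ almost surely. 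This is precisely the standard non-degeneracy of the classical Gaussian multiplicative chaos in the subcritical regime $\gamma<\sqrt{2d}=2$ (here $d=2$): for such $\gamma$, the measure $\mu^{\gamma}$ assigns strictly positive mass to every open set of positive Lebesgue measure (see e.g.~\cite{Rhodes_2014,Berestycki_2017}). A general Carleson cube $Q_{[a,b]}$ is handled identically by localizing to its upper half $[a,b]\times[\tfrac{|b-a|}{2},|b-a|]$, which is again bounded away from the real axis.

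The point to get right — though it is not really an obstacle — is that the positivity argument must be carried out away from the real axis: near the boundary the weight $\text{Im}(z)^{-\gamma^2/2}$ diverges and the comparison with classical chaos breaks down, which is exactly the mechanism behind the explosion of the first moment when $\gamma\in[\sqrt{2},2)$. Restricting to $Q^{\text{U}}_r$ cleanly sidesteps this, so that the only genuine input for positivity is the classical non-degeneracy statement. Combining the two bounds then yields $0<\mu_{\text{H}}(Q)<\infty$ almost surely for every $\gamma\in(0,2)$.
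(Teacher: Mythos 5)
Your proposal is correct and follows essentially the same route the paper intends: finiteness comes from the half-moment bound of Corollary~\ref{coro:SmallerThanHalfMoments}, and positivity is obtained by localizing to the upper Whitney piece $Q^{\text{U}}_r$, where the weight $\text{Im}(z)^{-\gamma^2/2}$ is bounded and the measure reduces to a classical subcritical chaos (the paper phrases this via its finite negative moments, you via positive mass on open sets, but these are the same classical non-degeneracy input).
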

By Remark~\ref{rem:CoveringDisk}, the above is true if we replace the Carleson cube $Q$ by any non-empty measurable set of $\overline{\mathbb{H}}$ (or in the disk model, of $\overline{\mathbb{D}}$).

\subsection{Determining the exact moment bound}\label{sec:HardPart}
Without loss of generality, fix $r>0$ and let $Q_r=Q_{[-r,r]}$. We know from \eqref{eq:ExplosionFirstMoment} and Lemma~\ref{lem:ExistenceHalfMoment} that in the $\gamma\in[\sqrt{2},2)$ regime, the following threshold exists:
\begin{equation}
    p_c(\gamma)\coloneqq\sup\left\{p\geq\frac{1}{2}~;~\mathbb{E}[\mu_{\text{H}}(Q_r)^{p}]<\infty\right\}=\inf\{p\leq 1~;~\mathbb{E}[\mu_{\text{H}}(Q_r)^{p}]=\infty\}\in\left[\frac{2}{\gamma^2},1\right].
\end{equation}

We first give a crude estimate to improve the upper bound on $p_c(\gamma)$.
\begin{lemm}\label{lem:BadEstimate}
The threshold $p_c(\gamma)$ is bounded away from $1$. More precisely,
\begin{equation*}
    \forall \gamma\in[\sqrt{2},2),\quad p_c(\gamma)\leq\frac{1}{2}+\frac{1}{\gamma^2}.
\end{equation*}
In particular, for $\gamma\in(\sqrt{2},2)$, $p_c(\gamma)<1$.
\end{lemm}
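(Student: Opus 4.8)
The plan is to prove the contrapositive: if the $p$-th moment $\mathbb{E}[\mu_{\text{H}}(Q_r)^p]$ is finite (hence strictly positive, by Corollary~\ref{coro:NonDegenerateMeasure}), then $p$ must satisfy $\overline{\zeta}(p)\geq p$, and a one-line computation then forces $p\leq\frac12+\frac1{\gamma^2}$. No decorrelation, tail estimate, or independence is needed; the argument is a soft combination of the trivial subset bound, the concavity of $t\mapsto t^p$ on $(0,1)$, and the exact scaling relation of Lemma~\ref{lem:HyperbolicScaling}.

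First I would discard the upper block. Since $Q^{\text{L}}_r$ and $Q^{\text{R}}_r$ are disjoint subsets of $Q_r$ and $\mu_{\text{H}}(Q^{\text{U}}_r)\geq 0$, positivity of the measure gives the pointwise bound $\mu_{\text{H}}(Q_r)\geq \mu_{\text{H}}(Q^{\text{L}}_r)+\mu_{\text{H}}(Q^{\text{R}}_r)$. For $p\in(0,1)$ the map $t\mapsto t^p$ is concave, so Jensen's inequality applied to two equally weighted points yields the pointwise reverse power-mean inequality $(a+b)^p\geq 2^{p-1}(a^p+b^p)$ for all $a,b\geq 0$. Combining these and taking expectations gives
\[
\mathbb{E}[\mu_{\text{H}}(Q_r)^p]\geq 2^{p-1}\left(\mathbb{E}[\mu_{\text{H}}(Q^{\text{L}}_r)^p]+\mathbb{E}[\mu_{\text{H}}(Q^{\text{R}}_r)^p]\right).
\]

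The key input is that each child carries the same $p$-th moment as a $\tfrac12$-scaled full cube. Indeed, the exact boundary kernel~\eqref{eq:ExactScalingBoundaryKernel} and the weight $\text{Im}(z)^{-\gamma^2/2}$ are invariant under real (horizontal) translations, so $\mu_{\text{H}}(Q^{\text{R}}_r)=\mu_{\text{H}}(Q_{[0,r]})$ is equal in law to $\mu_{\text{H}}(Q_{[-r/2,r/2]})=\mu_{\text{H}}(\tfrac12 Q_r)$, and likewise for $Q^{\text{L}}_r$. Under the standing assumption that $\mathbb{E}[\mu_{\text{H}}(Q_r)^p]$ is finite, Lemma~\ref{lem:HyperbolicScaling} applies and gives $\mathbb{E}[\mu_{\text{H}}(Q^{\text{L}}_r)^p]=\mathbb{E}[\mu_{\text{H}}(Q^{\text{R}}_r)^p]=2^{-\overline{\zeta}(p)}\mathbb{E}[\mu_{\text{H}}(Q_r)^p]$. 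Substituting, the finite positive quantity $\mathbb{E}[\mu_{\text{H}}(Q_r)^p]$ cancels and leaves $1\geq 2^{\,p-\overline{\zeta}(p)}$, that is, $\overline{\zeta}(p)\geq p$.

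It then remains to read off the constraint. Since $\overline{\zeta}(p)-p=(1+\tfrac{\gamma^2}2)p-\gamma^2 p^2=p\big((1+\tfrac{\gamma^2}2)-\gamma^2 p\big)$, the inequality $\overline{\zeta}(p)\geq p$ fails for every $p>\tfrac12+\tfrac1{\gamma^2}$. Hence for such $p$ the moment cannot be finite; being nonzero by Corollary~\ref{coro:NonDegenerateMeasure}, it must be infinite, which gives $p_c(\gamma)\leq\tfrac12+\tfrac1{\gamma^2}$, and in particular $p_c(\gamma)<1$ when $\gamma\in(\sqrt2,2)$. I expect no serious obstacle: the only points requiring care are the stationarity of the exact kernel, used to identify the laws of the two children, and the bookkeeping ensuring that Lemma~\ref{lem:HyperbolicScaling} is invoked only under the provisional finiteness hypothesis. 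The estimate is deliberately lossy---the concavity step cannot detect the near-independence of the two halves, which is why it stops at $\tfrac12+\tfrac1{\gamma^2}$ rather than the sharp $\tfrac2{\gamma^2}$; recovering the exact threshold is precisely the task deferred to the later sections.
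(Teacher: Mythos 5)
Your proposal is correct and follows essentially the same route as the paper's own proof: discard the upper block, apply the concavity bound $(a+b)^p\geq 2^{p-1}(a^p+b^p)$, invoke the boundary scaling relation of Lemma~\ref{lem:HyperbolicScaling} to express the moments of the two children as $2^{-\overline{\zeta}(p)}$ times the parent's moment, cancel the finite positive moment, and read off $p\leq\frac12+\frac1{\gamma^2}$ from $\overline{\zeta}(p)\geq p$. The only cosmetic differences are that you make the translation-invariance step explicit before applying the scaling lemma and treat $\gamma=\sqrt2$ uniformly rather than as a separate trivial case.
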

\begin{proof}
The case for $\gamma=\sqrt{2}$ is clear: by~\eqref{eq:ExplosionFirstMoment}, $p_c(\sqrt{2})=1$ and the moment at $p_c(\sqrt{2})$ explodes.

Therefore, take $\gamma\in(\sqrt{2},2)$ and suppose that $\mathbb{E}[\mu_{\text{H}}(Q_r)^{p}]<\infty$ for some $\frac{1}{2}<p<1$. Consider the Whitney decomposition as in Section~\ref{sec:Strategy} (also see Figure~\ref{fig:LowerParts}). By the boundary scaling relation of Lemma~\ref{lem:HyperbolicScaling}, we have
\begin{equation*}
    \mathbb{E}[\mu_{\text{H}}(Q^{\text{L}}_r)^{p}]=\mathbb{E}[\mu_{\text{H}}(Q^{\text{R}}_r)^{p}]=2^{-\overline{\zeta}(p)}\mathbb{E}[\mu_{\text{H}}(Q_r)^{p}]<\infty.
\end{equation*}
Therefore, by Jensen's inequality, we have
\begin{equation*}
    \mathbb{E}[\mu_{\text{H}}(Q_r)^{p}]\geq \mathbb{E}[(\mu_{\text{H}}(Q^{\text{L}}_r)+\mu_{\text{H}}(Q^{\text{R}}_r))^{p}]\geq 2^{p-1}(\mathbb{E}[\mu_{\text{H}}(Q^{\text{L}}_r)^{p}]+\mathbb{E}[\mu_{\text{H}}(Q^{\text{R}}_r)^{p}]),
\end{equation*}
and combining these observations we have
\begin{equation*}
    \mathbb{E}[\mu_{\text{H}}(Q_r)^{p}]\geq 2^{p-\overline{\zeta}(p)}\mathbb{E}[\mu_{\text{H}}(Q_r)^{p}].
\end{equation*}
This implies $p-\overline{\zeta}(p)<0$. A quick calculation with~\eqref{eq:ScalingZetaFactor} shows that $p\leq\frac{1}{2}+\frac{1}{\gamma^2}$.

Therefore, $\mathbb{E}[\mu_{\text{H}}(Q_r)^{p}]<\infty$ implies $p\leq\frac{1}{2}+\frac{1}{\gamma^2}$, thus $p_c(\gamma)\leq\frac{1}{2}+\frac{1}{\gamma^2}$.
\end{proof}

The extended Seiberg bound for the boundary Liouville conformal field theory (with $\gamma\in[\sqrt{2},2)$) is equivalent to the following statement:
\begin{theo}[The extended Seiberg bound for large coupling constants]\label{th:SeibergSmallNecessary}
\begin{equation}
    \forall \gamma\in[\sqrt{2},2),\quad p_c(\gamma)=\frac{2}{\gamma^2}.
\end{equation}
Furthermore, the $p_c(\gamma)$-moment explodes, i.e. $\mathbb{E}[\mu_{\text{H}}(Q_r)^{\frac{2}{\gamma^2}}]=\infty$.
\end{theo}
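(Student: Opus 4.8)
The plan is to reduce both assertions of Theorem~\ref{th:SeibergSmallNecessary} to a single tail lower bound: there are constants $c>0$ and $t_0$ with
\[
\mathbb{P}\big(\mu_{\text{H}}(Q_r)>t\big)\ge c\,t^{-2/\gamma^2}\qquad\text{for all }t\ge t_0 .
\]
Granting this, the layer-cake formula gives, for every $p\ge 2/\gamma^2$,
\[
\mathbb{E}[\mu_{\text{H}}(Q_r)^{p}]\ge\int_{t_0}^{\infty}p\,t^{p-1}\,\mathbb{P}(\mu_{\text{H}}(Q_r)>t)\,dt\ge cp\int_{t_0}^{\infty}t^{\,p-1-2/\gamma^2}\,dt=\infty ,
\]
since $p-2/\gamma^2\ge 0$. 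Combined with Lemma~\ref{lem:ExistenceHalfMoment}, which gives $p_c(\gamma)\ge 2/\gamma^2$, this yields simultaneously $p_c(\gamma)=2/\gamma^2$ and the explosion of the critical moment. All the difficulty is therefore concentrated in the tail bound, which is precisely the quantitative content of the ``no exponential decay'' statement: the tail of $\mu_{\text{H}}(Q_r)$ is no lighter than the critical power law.

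To build mass of order $t$ I would exploit the multiscale structure. Iterating the Whitney decomposition from the proof of Lemma~\ref{lem:ExistenceHalfMoment}, the cube $Q_r$ is partitioned into rectangles congruent to $2^{-n}Q^{\text{U}}_r$, with exactly $2^{n}$ of them at scale $2^{-n}$. By the distributional form of the scaling relation of Lemma~\ref{lem:HyperbolicScaling}, together with the horizontal stationarity of the exact kernel~\eqref{eq:ExactScalingBoundaryKernel}, each such rectangle $R$ has the common law
\[
\mu_{\text{H}}(R)\overset{\text{(law)}}{=}2^{-n(2+\gamma^2/2)}\,e^{\gamma\sqrt{2\ln 2}\,G_n}\,W ,
\]
where $G_n\sim\mathcal{N}(0,n)$ is the Gaussian accumulated from the $n$ lognormal scaling factors and $W$ is a copy of the classical chaos $\mu_{\text{H}}(Q^{\text{U}}_r)$, which has density bounded away from $0$ and finite moments of all orders below $4/\gamma^2$. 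Bounding $W$ below by a constant, a single rectangle exceeds level $t$ with probability of order $e^{-f(n)}$, where $f(n)=(\ln t+n(2+\tfrac{\gamma^2}{2})\ln 2)^2/(4\gamma^2 n\ln 2)$ comes from the Gaussian deviation $\mathbb{P}(G_n>y)\approx e^{-y^2/2n}$. Weighing this against the entropy $2^{n}$ of cube choices and maximising $n\ln 2-f(n)$ over the scale $n$ is a Legendre-type computation whose maximiser is $n^{*}\asymp\ln t$; using the algebraic identity $(2-\tfrac{\gamma^2}{2})+(2+\tfrac{\gamma^2}{2})=4$ one finds the maximum equals exactly $-\tfrac{2}{\gamma^2}\ln t$. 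This pins the tail exponent at the critical value $2/\gamma^2$, and explains why the sharp threshold is $2/\gamma^2$ rather than the cruder value $\tfrac12+\tfrac1{\gamma^2}$ of Lemma~\ref{lem:BadEstimate}.

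To convert this heuristic into a rigorous bound I would run a second-moment (Paley--Zygmund) argument at the optimal scale $n^{*}$. Setting $Z=\sum_i\mathbf{1}_{\{\mu_{\text{H}}(R_i)>t\}}$ over the scale-$2^{-n^{*}}$ rectangles $R_i$ (or over a well-separated subfamily, at the cost of only a constant factor), the indicators are bounded, so the infinite first moment of $\mu_{\text{H}}$ itself poses no obstruction here. Since $\{Z>0\}\subset\{\mu_{\text{H}}(Q_r)>t\}$ and $\mathbb{E}[Z]\asymp t^{-2/\gamma^2}$ by the computation above, Paley--Zygmund reduces the tail bound to the estimate $\mathbb{E}[Z^2]\le C\,\mathbb{E}[Z]$, uniformly in $t$. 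I expect the control of
\[
\mathbb{E}[Z^2]=\sum_{i,j}\mathbb{P}\big(\mu_{\text{H}}(R_i)>t,\ \mu_{\text{H}}(R_j)>t\big)
\]
to be the main obstacle: since the field is only log-correlated, nearby rectangles are positively correlated, and a crude bound would let close pairs dominate the second moment and spoil the exponent. I would tame this with the Gaussian decorrelation estimates of Section~\ref{sec:TechnicalEstimates}, comparing the joint law on separated rectangles to an independent model, and---following the standard strategy for lower bounds on log-correlated fields---restricting each event to a ``good'' event on which the field stays below a linear barrier along its descent, so that the near-diagonal contributions to $\mathbb{E}[Z^2]$ become summable. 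Organising the pairs by their separation scale is exactly where a Sokoban-type counting lemma enters. Once $\mathbb{E}[Z^2]\le C\,\mathbb{E}[Z]$ is secured, the Paley--Zygmund inequality closes, the tail estimate follows, and with it Theorem~\ref{th:SeibergSmallNecessary}.
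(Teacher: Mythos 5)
Your route is genuinely different from the paper's. The paper never touches the tail of $\mu_{\text{H}}(Q_r)$: it slices $Q_r$ into horizontal layers $L_{r,n}$, proves a soft ``no exponential decay'' statement for $\mathbb{E}[\mu_{\text{H}}(L_{r,n})^{p}]$ as $p\uparrow p_c(\gamma)$ (Lemma~\ref{lem:NoExponentailDecay}, an interpolation argument using only the convexity of $p\mapsto\ln\mathbb{E}[\,\cdot\,^{p}]$ and the explicit first moment $\mathbb{E}[\mu_{\text{H}}(L_{r,n})]\asymp 2^{n(\gamma^2/2-1)}$ of a slice), and then derives the recursion $\mathbb{E}[\mu_{\text{H}}(L_{r,n+1})^{p}]\geq u\,\mathbb{E}[\mu_{\text{H}}(L_{r,n})^{p}]-C2^{-\kappa n}$ with $u>1$ from the converse subadditivity inequality and a single-scale decorrelation bound (Corollary~\ref{coro:AlmostFactorization}); a scale-dependent cutoff $\delta_n$ then handles $p=2/\gamma^{2}$ exactly. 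Your reduction of both assertions to the tail bound $\mathbb{P}(\mu_{\text{H}}(Q_r)>t)\geq c\,t^{-2/\gamma^{2}}$ is valid, your distributional scaling identity for a scale-$2^{-n}$ rectangle is correct (the lognormal normalization is correctly absorbed into the prefactor $2^{-n(2+\gamma^2/2)}$), and your Legendre computation does land on $-\tfrac{2}{\gamma^{2}}\ln t$ at $n^{*}\asymp\ln t/((2-\tfrac{\gamma^2}{2})\ln 2)$; the level required of $G_{n^*}$ is strictly above the typical maximum over the $2^{n^*}$ rectangles since $\gamma<2$, so you are in the supercritical regime where a second-moment method is in principle sharp up to constants. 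If completed, your argument would prove strictly more than the theorem, namely a power-law tail lower bound in the spirit of~\cite{Rhodes_2019,wong2020universal}.

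The gap is that the entire difficulty is concentrated in $\mathbb{E}[Z^{2}]\leq C\,\mathbb{E}[Z]$ uniformly in $t$, and this step is only described, not carried out. Two concrete obstacles. First, the decorrelation estimates of Section~\ref{sec:TechnicalEstimates} and Corollary~\ref{coro:AlmostFactorization} compare \emph{product moments} $\mathbb{E}[\mu_{\text{H}}(A)^{a}\mu_{\text{H}}(B)^{b}]$; they do not directly control joint exceedance probabilities $\mathbb{P}(\mu_{\text{H}}(R_i)>t,\ \mu_{\text{H}}(R_j)>t)$, and Slepian's Lemma~\ref{lem:Slepian} cannot be invoked verbatim for the functional $\mathbf{1}_{\{\cdot>t\}}\mathbf{1}_{\{\cdot>t\}}$ without verifying the sign conditions on the mixed derivatives for that specific $F$. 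You would either need a Markov bound $\mathbb{P}(\mu_{\text{H}}(R_i)>t,\mu_{\text{H}}(R_j)>t)\leq t^{-a-b}\,\mathbb{E}[\mu_{\text{H}}(R_i)^{a}\mu_{\text{H}}(R_j)^{b}]$ with exponents $a,b$ optimized over the separation scale of the pair $(i,j)$, or a genuine barrier argument; either way this is a substantial multi-scale computation, arguably heavier than the paper's single-scale recursion. Second, there is a quantitative subtlety affecting the ``furthermore'' clause: the precise Gaussian tail already puts a factor $1/\sqrt{n^{*}}\asymp(\ln t)^{-1/2}$ into $\mathbb{E}[Z]$ at a single scale, and barrier truncations typically cost further powers of $n^{*}$. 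A tail bound of the form $c\,t^{-2/\gamma^{2}}(\ln t)^{-\theta}$ still yields $p_c(\gamma)=2/\gamma^{2}$ for any $\theta$, but the explosion of the critical moment $\mathbb{E}[\mu_{\text{H}}(Q_r)^{2/\gamma^{2}}]$ survives only for $\theta\leq 1$; you must therefore track these logarithmic losses explicitly, which your sketch does not do. (Minor point: the assertion that $W$ has a density bounded away from $0$ is unjustified and unnecessary; $\mathbb{P}(W\geq w_0)>0$ for some $w_0>0$, which follows from Corollary~\ref{coro:NonDegenerateMeasure}, is all you use.)
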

\begin{rema}\label{rem:NotTheTrivialOne}
The case $\gamma=\sqrt{2}$ is already contained in the observation~\eqref{eq:ExplosionFirstMoment}. We will suppose that $\gamma\in(\sqrt{2},2)$ in the rest of this section. Notice that in this case, Lemma~\ref{lem:BadEstimate} already yields $p_c(\gamma)<1$.
\end{rema}

Before going into the proof, we introduce some relevant quantities that we shall examine. Without loss of generality, consider $Q_r=Q_{[-r,r]}$ and define
\begin{equation*}
    L_{r,n}\coloneqq[-r,r]\times[2^{-n-1}r, 2^{-n}r]\subset Q_r
\end{equation*}
so that $\{L_{r,n}\}_{n\geq 0}$ is a partition of $Q_r$ into horizontal slices, see Figure~\ref{fig:HorizontalRectangles} (again, we ignore boundary where slices intersect). By dominated convergence, to show $\mathbb{E}[\mu_{\text{H}}(Q_r)^{p}]=\infty$, it is sufficient to show that
\begin{equation*}
    \limsup_{n\to\infty}\mathbb{E}[\mu_{\text{H}}(L_{r,n})^{p}]>0.
\end{equation*}

\begin{figure}[h]
\centering
\includegraphics[height=15em]{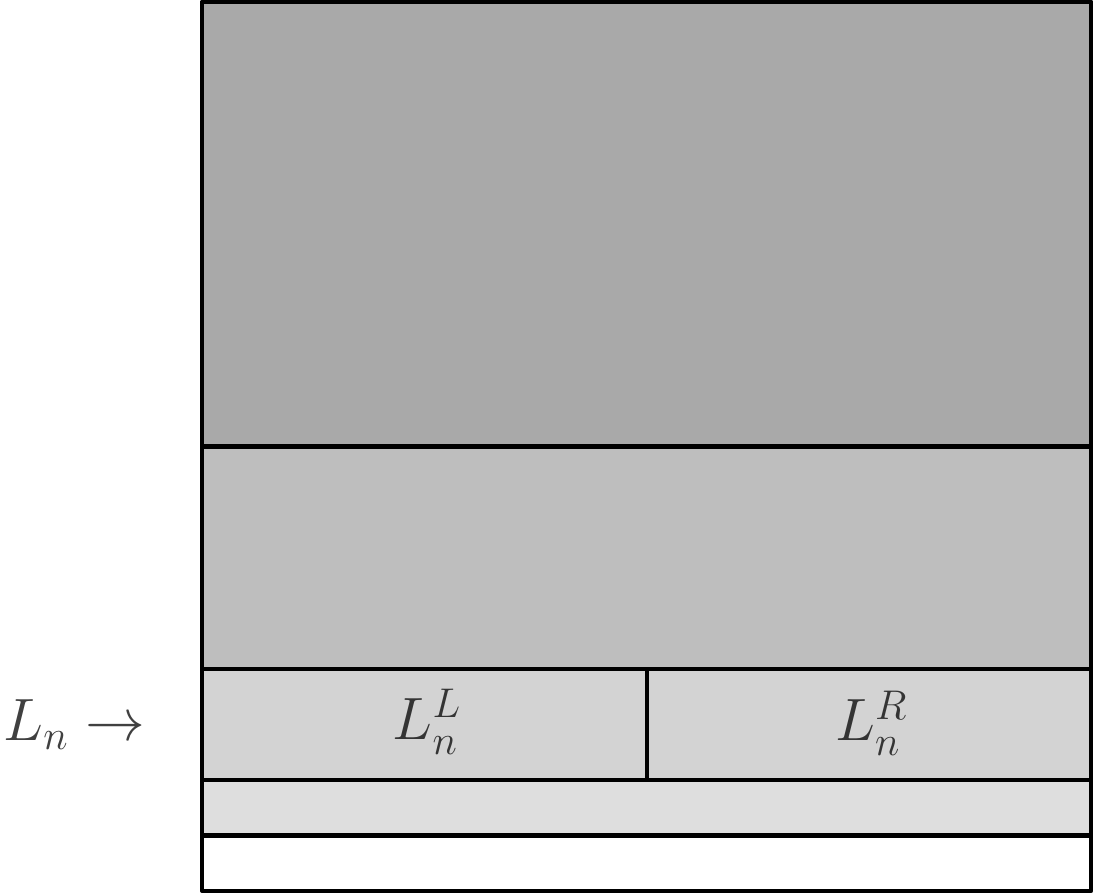}
\caption{Dividing a Carleson cube into horizontal slices.}
\label{fig:HorizontalRectangles}
\end{figure}

We first show the following key lemma, that if $\mathbb{E}[\mu_{\text{H}}(L_{r,n})^{p}]$ exhibits exponential decay in $n$ with strictly positive parameter, then we are away from $p_c(\gamma)$ in the sense that $p_c(\gamma)-p$ is bounded from below by a positive value depending only on this parameter and $\gamma$.
\begin{lemm}[No exponential decay property at $p_c(\gamma)$]\label{lem:NoExponentailDecay}
For any $\kappa>0$, there exists some $p\in(\frac{1}{2},p_c(\gamma))$ close enough to $p_c(\gamma)$ such that
\begin{equation}\label{eq:NoExponentialDecay}
    \limsup_{n\to\infty}2^{\frac{\kappa}{2}n}\mathbb{E}[\mu_{\text{H}}(L_{r,n})^{p}]\geq 1.
\end{equation}
In other words, $\mathbb{E}[\mu_{\text{H}}(L_{r,n})^{p}]$ cannot decay exponentially fast in $n$ with uniformly positive parameter in $p$, when $p$ is approaching the critical parameter $p_c(\gamma)$ from below. 
\end{lemm}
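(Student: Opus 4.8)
The plan is to prove the \emph{contrapositive} in quantitative form: writing $a_n(p)\coloneqq\mathbb{E}[\mu_{\text{H}}(L_{r,n})^{p}]$, I will show that if the slice moments decay exponentially, meaning $\limsup_n 2^{\kappa n/2}a_n(p)<1$ for some $\kappa>0$ and some $p\in(\frac12,p_c(\gamma))$, then $p$ is pushed away from the threshold by a definite amount $p_c(\gamma)-p\ge g(\kappa,\gamma)>0$, with $g(\kappa,\gamma)\to 0$ as $\kappa\to 0$. Since the statement only asks for \emph{some} $p$ arbitrarily close to $p_c(\gamma)$ with no such decay, this is more than enough: it gives the conclusion for \emph{all} $p$ in the left-neighborhood $(p_c(\gamma)-g(\kappa,\gamma),p_c(\gamma))$. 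The engine is an interpolation (log-convexity of moments) that upgrades decay at the exponent $p$ into finiteness of the \emph{full} cube moment at a strictly larger exponent $p'>p$, which then forces $p'\le p_c(\gamma)$.

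First I would record a crude upper bound on $a_n(q)$, valid for every $q\in(\frac12,1)$ \emph{regardless} of any decay hypothesis. Tile the slice $L_{r,n}$ by $\sim 2^{n+2}$ dyadic boxes of side $2^{-n-1}r$, each a horizontal translate of $2^{-n-1}B_0$ for the fixed reference box $B_0=[0,r]\times[r,2r]$ at unit scale. Because the exact boundary kernel~\eqref{eq:ExactScalingBoundaryKernel} and the weight $\mathrm{Im}(z)^{-\gamma^2/2}$ are invariant under real (horizontal) translations, all these boxes carry the same law of mass, and by the scaling relation of Lemma~\ref{lem:HyperbolicScaling} each has $q$-th moment $2^{-(n+1)\overline{\zeta}(q)}\mathbb{E}[\mu_{\text{H}}(B_0)^q]$. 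Since $B_0$ is bounded away from $\partial\mathbb{H}$, the measure $\mu_{\text{H}}$ restricted to $B_0$ is comparable to an ordinary Gaussian multiplicative chaos, so $\mathbb{E}[\mu_{\text{H}}(B_0)^q]<\infty$ for all $q<4/\gamma^2$. Subadditivity $(\sum_j y_j)^q\le\sum_j y_j^q$ for $q\le 1$ then yields $a_n(q)\le C_q\,2^{n(1-\overline{\zeta}(q))}$.

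Next I would interpolate. For $q\in(p,1)$ and $t\in(0,1)$ set $p'=(1-t)p+tq$; log-convexity of $s\mapsto\mathbb{E}[\mu_{\text{H}}(L_{r,n})^{s}]$ (Lyapunov's inequality) gives $a_n(p')\le a_n(p)^{1-t}a_n(q)^{t}$. Feeding in the decay hypothesis $a_n(p)\le 2^{-\kappa n/2}$ (valid for $n$ large) and the crude bound from the previous step, the exponential rate of $a_n(p')$ is at most $E=-(1-t)\frac{\kappa}{2}+t\,(1-\overline{\zeta}(q))$. Using the uniform bound $1-\overline{\zeta}(q)\le\Lambda\coloneqq\max_{[1/2,1]}(1-\overline{\zeta})=\frac{\gamma^2}{2}-1$ on $[\tfrac12,1]$ (read off from Proposition~\ref{prop:TrivialProposition}), the choice $t=\frac{\kappa}{2(2\Lambda+\kappa)}$ makes $E\le-\kappa/4<0$, so $\sum_n a_n(p')<\infty$. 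By subadditivity across the slices partitioning $Q_r$ this gives $\mathbb{E}[\mu_{\text{H}}(Q_r)^{p'}]\le\sum_n a_n(p')<\infty$, hence $p'\le p_c(\gamma)$. Taking $q-p=\frac{1-p_c(\gamma)}{2}$ (admissible since $p_c(\gamma)<1$ by Lemma~\ref{lem:BadEstimate}, and keeping $q,p'<1$) produces the explicit gap $p_c(\gamma)-p\ge p'-p=t(q-p)=\frac{\kappa\,(1-p_c(\gamma))}{4(2\Lambda+\kappa)}\eqqcolon g(\kappa,\gamma)>0$, which completes the contrapositive.

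The conceptual heart — and the step I expect to require the most care — is the crude bound of the second paragraph combined with the observation that makes interpolation effective: the reference-box moment $\mathbb{E}[\mu_{\text{H}}(B_0)^q]$ stays finite for $q$ up to $4/\gamma^2>1$, hence well past $p_c(\gamma)$, because the blow-up of the full moment comes only from the accumulation of slices at the boundary and never from a single slice. One is therefore genuinely permitted to interpolate against a \emph{super-critical} exponent $q>p_c(\gamma)$, and it is precisely this freedom that lets the assumed decay at $p$ dominate the growth at $q$. By contrast, nothing delicate is needed here — no Gaussian decorrelation and no independence — only translation invariance, the scaling relation, subadditivity and log-convexity. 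The remaining bookkeeping (checking $q<\min(1,4/\gamma^2)$ and $p'<1$ so all subadditivity and moment-finiteness steps apply, and verifying $g(\kappa,\gamma)\to 0$ as $\kappa\to 0$ so the contrapositive yields exponents arbitrarily close to $p_c(\gamma)$) is routine.
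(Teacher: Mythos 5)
Your proposal is correct and follows essentially the same route as the paper: both argue by contrapositive, use log-convexity (Lyapunov) of $s\mapsto\ln\mathbb{E}[\mu_{\text{H}}(L_{r,n})^{s}]$ to interpolate the assumed exponential decay at $p$ against controlled exponential growth of the slice moments at a higher exponent, and then sum over the slices partitioning $Q_r$ to conclude $\mathbb{E}[\mu_{\text{H}}(Q_r)^{p'}]<\infty$ for some $p'$ exceeding $p$ by a margin depending only on $\kappa$ and $\gamma$ (using $p\le\frac12+\frac1{\gamma^2}<1$ from Lemma~\ref{lem:BadEstimate} to make the margin uniform). The only divergence is the upper interpolation endpoint: the paper takes it to be $1$ and uses the exact computation $\mathbb{E}[\mu_{\text{H}}(L_{r,n})]=C(\gamma,r)\,2^{n(\gamma^{2}/2-1)}$ --- legitimate because each individual slice sits at distance $\asymp 2^{-n}$ from the boundary and hence has a finite first moment even though $\mathbb{E}[\mu_{\text{H}}(Q_r)]=\infty$ --- whereas you stop at some $q<1$ and derive the growth bound $a_n(q)\le C_q\,2^{n(1-\overline{\zeta}(q))}$ by tiling the slice into $\sim 2^{n+2}$ translates of a rescaled reference box and invoking horizontal translation invariance of the exact kernel together with Lemma~\ref{lem:HyperbolicScaling} and subadditivity. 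Since $1-\overline{\zeta}(1)=\frac{\gamma^2}{2}-1$, your worst-case rate $\Lambda$ coincides with the paper's first-moment growth rate, so the tiling step is a correct but strictly more laborious substitute for a one-line integral; on the other hand your version makes explicit that nothing beyond translation invariance, scaling and subadditivity is needed, and it would survive in situations where the first moment of a slice were unavailable.
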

\begin{proof}
We proceed by contradiction, that if~\eqref{eq:NoExponentialDecay} does not hold and $\mathbb{E}[\mu_{\text{H}}(Q_r)^{p}]<\infty$, then for some $\epsilon=\epsilon(\kappa,\gamma)$ independent of $p$, we have $\mathbb{E}[\mu_{\text{H}}(Q_r)^{p+\epsilon}]<\infty$.

For fixed $n$, notice that the first moment of $\mu_{\text{H}}(L_{r,n})$ exists and
\begin{equation}\label{eq:ExponentialFirstMoment}
    \mathbb{E}[\mu_{\text{H}}(L_{r,n})]=\int_{[-r,r]\times[2^{-n-1}r,2^{-n}r]}y^{-\frac{\gamma^2}{2}}dxdy=C(\gamma,r)2^{n(\frac{\gamma^2}{2}-1)},
\end{equation}
where by assumption we are in the $\gamma\in[\sqrt{2},2)$ regime. If~\eqref{eq:NoExponentialDecay} does not hold, then
\begin{equation}\label{eq:DoesNotHold}
     \limsup_{n\to\infty}2^{\frac{\kappa}{4}n}\mathbb{E}[\mu_{\text{H}}(L_{r,n})^{p}]=0.
\end{equation}
By convexity of the function $p\mapsto\ln\mathbb{E}[\mu_{\text{H}}(L_{r,n})^{p}]$, for any $0<\epsilon<1-p$,
\begin{equation*}
    \ln\mathbb{E}[(\mu_{\text{H}}(L_{r,n}))^{p+\epsilon}]\leq\frac{\epsilon}{1-p}\ln\mathbb{E}[\mu_{\text{H}}(L_{r,n})]+\frac{1-p-\epsilon}{1-p}\ln\mathbb{E}[(\mu_{\text{H}}(L_{r,n}))^{p}].
\end{equation*}
Choosing $\epsilon$ small enough such that
\begin{equation}\label{eq:ChoiceEpsilon}
    \eta\coloneqq\frac{\kappa}{4}\left(\frac{1-p-\epsilon}{1-p}\right)-\left(\frac{\gamma^2}{2}-1\right)\frac{\epsilon}{1-p}>0,
\end{equation}
we get from~\eqref{eq:ExponentialFirstMoment} and~\eqref{eq:DoesNotHold} that
\begin{equation*}
    \limsup_{n\to\infty}2^{\eta n}\mathbb{E}[(\mu_{\text{H}}(L_{r,n}))^{p+\epsilon}]=0.
\end{equation*}
Together with subadditivity (recall that $\{L_{r,n}\}_{n}$ forms a partition of $Q_r$), this implies that
\begin{equation*}
    \mathbb{E}[\mu_{\text{H}}(Q_r)^{p+\epsilon}]\leq\sum_{n}\mathbb{E}[(\mu_{\text{H}}(L_{r,n}))^{p+\epsilon}]<\infty.
\end{equation*}

Finally, notice that the choice of $\epsilon$ in~\eqref{eq:ChoiceEpsilon} can be made to depend only on $\gamma$ and $\kappa$. Indeed, since we can assume that $p\leq \frac{1}{2}+\frac{1}{\gamma^2}<1$ by Lemma~\ref{lem:BadEstimate}, a possible choice is (also recall that we assume $\gamma\in(\sqrt{2},2)$ by Remark~\ref{rem:NotTheTrivialOne})
\begin{equation*}
    2\epsilon=\frac{\frac{\kappa}{4}\left(\frac{1}{2}-\frac{1}{\gamma^2}\right)}{\frac{\kappa}{4}+\frac{\gamma^2}{2}-1}>0.
\end{equation*}
This means that for $p<p_c(\gamma)$ close enough to $p_c(\gamma)$, namely when $p\in(p_c(\gamma)-\epsilon,p_c(\gamma))$, we must have~\eqref{eq:NoExponentialDecay}. This finishes the proof of the lemma.
\end{proof}

We are now in a position to conclude the proof of the Seiberg bound in the large coupling constant regime $\gamma\in[\sqrt{2},2)$. We will make use of a Gaussian decorrelation inequality recorded in Corollary~\ref{coro:AlmostFactorization}, the proof of which demands some rather lengthy development and will be detailed in Section~\ref{sec:GaussianEstimates}.

\begin{proof}[Proof of Theorem~\ref{th:SeibergSmallNecessary}]
We proceed in two steps.

\medskip

$\bullet$ We first show the weaker statement that $p_c(\gamma)=\frac{2}{\gamma^2}$ by showing that
\begin{equation*}
    \forall p>\frac{2}{\gamma^2},\quad \limsup_{n\to\infty}\mathbb{E}[\mu_{\text{H}}(L_{r,n})^{p}]>0.
\end{equation*}
To this end, we use the boundary scaling relation of Lemma~\ref{lem:HyperbolicScaling} and Gaussian decorrelation inequalities to compare $\mathbb{E}[\mu_{\text{H}}(L_{r,n})^{p}]$ and $\mathbb{E}[\mu_{\text{H}}(L_{r,n+1})^{p}]$. First, we divide $L_{r,n+1}$ into its left and right parts (see Figure~\ref{fig:HorizontalRectangles}),
\begin{equation*}
    L_{r,n+1}=[-r,0]\times[2^{-n-2}r, 2^{-n-1}r]\cup [0,r]\times[2^{-n-2}r, 2^{-n-1}r]\eqqcolon L^{\text{L}}_{r,n+1}\cup L^{\text{R}}_{r,n+1}.
\end{equation*}
The boundary scaling relation of Lemma~\ref{lem:HyperbolicScaling} then implies
\begin{equation*}
    \mathbb{E}[\mu_{\text{H}}(L^{L}_{r,n+1})^{p}]=\mathbb{E}[\mu_{\text{H}}(L^{R}_{r,n+1})^{p}]=2^{-\overline{\zeta}(p)}\mathbb{E}[\mu_{\text{H}}(L_{r,n})^{p}].
\end{equation*}
An elementary equality recalled in Appendix~\ref{sec:Elementary}, namely
\begin{equation*}
    \forall 0<p'<p<1,\quad x^{p}+y^{p}\leq (x+y)^{p}+2(x^{p'}y^{p-p'}+x^{p-p'}y^{p'}),
\end{equation*}
implies that
\begin{equation*}
    \mathbb{E}[\mu_{\text{H}}(L_{r,n+1})^{p}]\geq 2^{1-\overline{\zeta}(p)}\mathbb{E}[\mu_{\text{H}}(L_{r,n})^{p}]-4\mathbb{E}[\mu_{\text{H}}(L^{\text{L}}_{r,n+1})^{p'}\mu_{\text{H}}(L^{\text{R}}_{r,n+1})^{p-p'}],
\end{equation*}
where the factor $4$ in the last term comes from the symmetry between $L^{\text{L}}_{r,n+1}$ and $L^{\text{R}}_{r,n+1}$.

By the Gaussian decorrelation inequality of Corollary~\ref{coro:AlmostFactorization}, for any $\delta>0$,
\begin{equation*}
\begin{split}
    &\mathbb{E}[\mu_{\text{H}}(L^{\text{L}}_{r,n+1})^{p'}\mu_{\text{H}}(L^{\text{R}}_{r,n+1})^{p-p'}]\\
    \leq{}&C_{p}\delta^{p-p'}\mathbb{E}[\mu_{\text{H}}(L^{L}_{r,n+1})^{p}]+\delta^{-2\gamma^2p'(p-p')}\mathbb{E}[\mu_{\text{H}}(L^{\text{L}}_{r,n+1})^{p'}]\mathbb{E}[\mu_{\text{H}}(L^{\text{R}}_{r,n+1})^{p-p'}]\\
    ={}&C_{p}\delta^{p-p'} 2^{-\overline{\zeta}(p)}\mathbb{E}[\mu_{\text{H}}(L_{r,n})^{p}]+\delta^{-2\gamma^2p'(p-p')}\mathbb{E}[\mu_{\text{H}}(L^{\text{L}}_{r,n+1})^{p'}]\mathbb{E}[\mu_{\text{H}}(L^{\text{R}}_{r,n+1})^{p-p'}].
\end{split}
\end{equation*}
By assumption, $p>\frac{2}{\gamma^2}$, so that by Proposition~\ref{prop:TrivialProposition} we can choose some $\delta>0$ such that $u\coloneqq 2^{1-\overline{\zeta}(p)}-4C_p\delta^{p-p'}2^{-\overline{\zeta}(p)}>1$, independent of $n$. By the same argument as in the proof of Lemma~\ref{lem:ExistenceHalfMoment}, we also know that if we choose $p'<p$ such that $\frac{1}{2}<p'<\frac{2}{\gamma^2}$
\begin{equation*}
    v_n\coloneqq\mathbb{E}[\mu_{\text{H}}(L^{\text{L}}_{r,n+1})^{p'}]\leq \frac{1}{2}2^{(n+1)(1-\overline{\zeta}(p'))}\mathbb{E}[\mu_{\text{H}}(Q^{\text{U}}_{r})^{p'}]\leq C2^{-\kappa n},
\end{equation*}
where $\kappa=\kappa(p')=-(1-\overline{\zeta}(p'))>0$ by Proposition~\ref{prop:TrivialProposition}, independent of $n$. Observe that $\kappa$ can be chosen independent of $\frac{2}{\gamma^2}<p$, since it only depends on our choice of $p'$. Notice also that necessarily $0<p-p'<\frac{1}{2}$, so that
\begin{equation*}
    \mathbb{E}[\mu_{\text{H}}(L^{\text{R}}_{r,n+1})^{p-p'}]\leq \mathbb{E}[\mu_{\text{H}}(Q)^{p-p'}]<\infty
\end{equation*}
is bounded uniformly in $n$ by Corollary~\ref{coro:SmallerThanHalfMoments}.

Summerizing, for some fixed $\delta>0$ and some constants $C>0, \kappa>0, u>1$ all independent of $n$, we have
\begin{equation}\label{eq:IterationRelation}
    \mathbb{E}[\mu_{\text{H}}(L_{r,n+1})^{p}]\geq u\mathbb{E}[\mu_{\text{H}}(L_{r,n})^{p}]-C2^{-\kappa n}.
\end{equation}
By Lemma~\ref{lem:NoExponentailDecay} and the observation that $\kappa$ can be chosen independent of $p$, if $p$ is close enough to $p_c(\gamma)$, the factor $Ce^{-\kappa n}$ is negligible in front of $\mathbb{E}[\mu_{\text{H}}(L_{r,n})^{p}]$ for large enough $n$. Since we have chosen $\delta>0$ small enough such that $u>1$, a simple iteration of relation~\eqref{eq:IterationRelation} yields
\begin{equation*}
    \limsup_{n\to\infty}\mathbb{E}[\mu_{\text{H}}(L_{r,n})^{p}]>0,
\end{equation*}
which is the contradiction that we are after. This completes the proof of $p_c(\gamma)=\frac{2}{\gamma^2}$.

\medskip

$\bullet$ We now consider the case of the critical threshold $p=p_c=\frac{2}{\gamma^2}$, where the above proof does not apply directly. Indeed, there is not enough space for the choice of a uniform cutoff $\delta>0$ as $1-\overline{\zeta}(\frac{2}{\gamma^2})=0$ by Proposition~\ref{prop:TrivialProposition}. Suitable changes are to be made and we now explain how.

The fact that $v_n\leq C2^{-\kappa n}$ remains intact, where we stress that $\kappa>0$ can be chosen independently of $p,n$ as long as $\gamma\in(\sqrt{2},2)$. Repeating the argument above, for any sequence $\delta_n>0$, define the associated sequence $u_n\coloneqq 2^{1-\overline{\zeta}(p_c)}-4C_p\delta^{p-p'}2^{-\overline{\zeta}(p)}=1-C_p\delta_n^{p-p'}$ (where we redefined $C_p$ for simplicity) and we have
\begin{equation*}
    \mathbb{E}[\mu_{\text{H}}(L_{r,n+1})^{p}]\geq u_n\mathbb{E}[\mu_{\text{H}}(L_{r,n})^{p}]-C\delta_n^{-2\gamma^2p'(p-p')}2^{-\kappa n}.
\end{equation*}
The argument for the no exponential decay property of~\eqref{eq:NoExponentialDecay} still holds for $p=p_c$, so
\begin{equation*}
    \limsup_{n\to\infty}2^{\frac{\kappa}{2}n}\mathbb{E}[\mu_{\text{H}}(L_{r,n})^{p}]\geq 1.
\end{equation*}
Therefore, we can fix $2p'=\frac{1}{2}+\frac{2}{\gamma^2}$ so that if we choose $\delta_n^{-2\gamma^2p'(p-p')}=O(2^{\frac{\kappa}{4}n})$, for example $\delta_n=2^{-\eta n}$ with $\eta=\frac{\kappa}{8\gamma^2p'(p-p')}>0$, then for large enough $n$, 
\begin{equation}\label{eq:FinalIteration}
    \mathbb{E}[\mu_{\text{H}}(L_{r,n+1})^{p}]\geq u_n\mathbb{E}[\mu_{\text{H}}(L_{r,n})^{p}]-\frac{1}{2}2^{-\frac{2\kappa}{3}n}.
\end{equation}
The factor $\frac{1}{2}$ in the last item above is only added for convenience in the sequel, it can be absorbed in the factor $2^{-\frac{2\kappa}{3}n}$ when $n$ is large enough.

Again by Lemma~\ref{lem:NoExponentailDecay}, for some large $n_0$, $\mathbb{E}[\mu_{\text{H}}(L_{r,n_0})^{p}]\geq 2^{-\frac{\kappa}{2}n_0}$. Notice also that with our choice $2p'=\frac{1}{2}+\frac{2}{\gamma^2}$ and $\gamma\in(\sqrt{2},2)$, $p-p'=\frac{2}{\gamma^2}-p'>0$. Then
\begin{equation*}
    \mathbb{E}[\mu_{\text{H}}(L_{r,n_{0}+1})^{p}]\geq (1-C_p\delta_n^{p-p'})2^{-\frac{\kappa}{2}n_0}-\frac{1}{2}2^{-\frac{2\kappa}{3}n_0}\geq (1-2^{-\frac{1}{2}\eta(p-p')n_0}-2^{-\frac{\kappa}{6}n_0})2^{-\frac{\kappa}{2}n_0}
\end{equation*}
by~\eqref{eq:FinalIteration}, if $n_0$ is large enough so that we absorb the constant $C_p$ inside the exponential factor for simplicity. The product
\begin{equation*}
    \prod_{n=n_0}^{\infty}(1-2^{-\frac{1}{2}\eta(p-p')n}-2^{-\frac{\kappa}{6}n})
\end{equation*}
converges to some positive limit, so that by choosing $n_0$ large enough we can suppose that it is always greater than $\frac{1}{2}$. Then by induction on~\eqref{eq:FinalIteration} as above, we claim that for any $n\geq n_0$, we have
\begin{equation*}
    \mathbb{E}[\mu_{\text{H}}(L_{r,n+1})^{p}]\geq 2^{-\frac{\kappa}{2}n_0}\prod_{k=n_0}^{n}(1-2^{-\frac{1}{2}\eta(p-p')k}-2^{-\frac{\kappa}{6}k}).
\end{equation*}
Indeed, we have just seen that this is true for $n=n_0$. If this is true for some $n>n_0$, then at rank $n+1$ we have
\begin{equation*}
    u_{n+1}\mathbb{E}[\mu_{\text{H}}(L_{r,n+1})^{p}]\geq (1-2^{-\frac{1}{2}\eta(p-p')(n+1)})\cdot 2^{-\frac{\kappa}{2}n_0}\prod_{k=n_0}^{n}(1-2^{-\frac{1}{2}\eta(p-p')k}-2^{-\frac{\kappa}{6}k})
\end{equation*}
and
\begin{equation*}
    \frac{1}{2}2^{-\frac{2\kappa}{3}(n+1)}=\frac{1}{2}2^{-\frac{\kappa}{6}(n+1)}\cdot 2^{-\frac{\kappa}{2}(n+1)}\leq 2^{-\frac{\kappa}{6}(n+1)}\cdot 2^{-\frac{\kappa}{2}n_0}\prod_{k=n_0}^{n}(1-2^{-\frac{1}{2}\eta(p-p')k}-2^{-\frac{\kappa}{6}k})
\end{equation*}
Together with~\eqref{eq:FinalIteration}, this yields the above inequality for rank $n+1$.

In particular, for some large enough $n_0$ and any $n\geq n_0$,
\begin{equation*}
    \mathbb{E}[\mu_{\text{H}}(L_{r,n+1})^{p}]\geq 2^{-\frac{\kappa}{2}n_0}\prod_{k=n_0}^{n}(1-2^{-\frac{1}{2}\eta(p-p')k}-2^{-\frac{\kappa}{6}k})\geq \frac{1}{2}2^{-\frac{\kappa}{2}n_0}.
\end{equation*}
This shows that $\limsup_{n\to\infty}\mathbb{E}[\mu_{\text{H}}(L_{r,n})^{p}]>0$, so $\mathbb{E}[\mu_{\text{H}}(Q)^{p}]$ cannot be finite. Therefore,
\begin{equation*}
    \mathbb{E}[\mu_{\text{H}}(Q)^{\frac{2}{\gamma^2}}]=\infty.
\end{equation*}
This concludes the proof of Theorem~\ref{th:SeibergSmallNecessary}.
\end{proof}

Finally, for negative moments, it suffices to use the fact that classical Gaussian multiplicative chaos, such as $\mu_{\text{H}}(Q_r^{\text{U}})$, has finite negative moments of all order. Theorem~\ref{th:Main1} for $\gamma\in[\sqrt{2},2)$ follows from Lemma~\ref{lem:ExistenceHalfMoment} and Theorem~\ref{th:SeibergSmallNecessary}.

\section{Proof of the main theorem for large moments}
We continue with the case of small coupling constants $\gamma\in(0,\sqrt{2})$, where the first moment of the hyperbolic Gaussian multiplicative chaos measure is now finite:
\begin{equation}\label{eq:FiniteFirstMoment}
    \forall Q\in\mathcal{Q}^{*}_{\mathbb{R}}, \quad \mathbb{E}\left[\mu_{\text{H}}(Q)\right]=\mathbb{E}\left[\int_{Q}\text{Im}(z)^{-\frac{\gamma^2}{2}}d^2z\right]<\infty.
\end{equation}
The finiteness of the first moment allows us to implement the scheme of~\cite{kahane1976certaines}, originally designed for Gaussian multiplicative cascades models. More precisely, we will prove that, for any $Q\in\mathcal{Q}^{*}_{\mathbb{R}}$,
\begin{enumerate}
    \item $\mathbb{E}\left[\mu_{\text{H}}(Q)^{p}\right]$ explodes for $p\geq\frac{2}{\gamma^2}$;
    \item $\mathbb{E}\left[\mu_{\text{H}}(Q)^{p}\right]$ is finite for $p<\frac{2}{\gamma^2}$, by an extension of the strategy of~\cite{kahane1976certaines}.
\end{enumerate}
Together, these statements prove the extended Seiberg bounds of boundary Liouville conformal field theory of Theorem~\ref{th:Main1} in the regime $\gamma\in(0,\sqrt{2})$. Combined with Theorem~\ref{th:SeibergSmallNecessary}, this completes the proof of~Theorem~\ref{th:Main1}.

\subsection{Explosion of large positive moments}\label{sec:Superadditivity}
We start with the simpler direction, that moments of $\mu_{\text{H}}(Q)$ with $p\geq\frac{2}{\gamma^2}$ explode.
\begin{lemm}[Explosion of large positive moments]
Let $\gamma\in(0,\sqrt{2})$ and $Q\in\mathcal{Q}^{*}_{\mathbb{R}}$. Then for any $p\geq\frac{2}{\gamma^2}$,
\begin{equation*}
    \mathbb{E}[\mu_{\text{H}}(Q)^{p}]=\infty.
\end{equation*}
\end{lemm}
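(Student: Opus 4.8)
The statement to prove is that for $\gamma\in(0,\sqrt2)$ and any Carleson cube $Q\in\mathcal{Q}^{*}_{\mathbb{R}}$, the moment $\mathbb{E}[\mu_{\text{H}}(Q)^{p}]=\infty$ whenever $p\geq\frac{2}{\gamma^2}$. This is the ``easy direction'' in the small-$\gamma$ regime, dual to the subadditive argument of Lemma~\ref{lem:ExistenceHalfMoment}. In that regime the first moment is finite by~\eqref{eq:FiniteFirstMoment}, so the phase transition at $p=\frac{2}{\gamma^2}$ now lies \emph{above} $1$, and the natural tool is a \emph{superadditive} inequality obtained from Jensen, exactly mirroring the subadditive summation used before. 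The plan is to exploit the same infinite Whitney decomposition of $Q_r$ into the family of rectangles $\{2^{-n}Q^{\text{U}}_r\}$, where at each dyadic scale there are $2^{n}$ disjoint copies, and then sum the contributions from below rather than bounding from above.

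\begin{proof}
Without loss of generality take $Q=Q_r=Q_{[-r,r]}$. Since the hyperbolic measure is nonnegative and the rectangles of the infinite Whitney decomposition (Figure~\ref{fig:InfiniteWhitney}) are disjoint up to null sets, we have $\mu_{\text{H}}(Q_r)\geq\sum_{j}\mu_{\text{H}}(R_j)$ where $\{R_j\}$ ranges over all Whitney rectangles. Because $p\geq\frac{2}{\gamma^2}>1$ in this regime, the map $t\mapsto t^{p}$ is convex, and superadditivity of $(\,\cdot\,)^{p}$ on nonnegative summands gives
\begin{equation*}
    \mathbb{E}[\mu_{\text{H}}(Q_r)^{p}]\geq\sum_{n=0}^{\infty}2^{n}\,\mathbb{E}[\mu_{\text{H}}(2^{-n}Q^{\text{U}}_r)^{p}]
    =\mathbb{E}[\mu_{\text{H}}(Q^{\text{U}}_r)^{p}]\sum_{n=0}^{\infty}2^{\,n(1-\overline{\zeta}(p))},
\end{equation*}
where the equality uses the boundary scaling relation of Lemma~\ref{lem:HyperbolicScaling}, which gives $\mathbb{E}[\mu_{\text{H}}(2^{-n}Q^{\text{U}}_r)^{p}]=2^{-n\overline{\zeta}(p)}\mathbb{E}[\mu_{\text{H}}(Q^{\text{U}}_r)^{p}]$. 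By Proposition~\ref{prop:TrivialProposition}, for $p>\frac{2}{\gamma^2}$ we have $1-\overline{\zeta}(p)>0$, so the geometric series diverges; since $\mu_{\text{H}}(Q^{\text{U}}_r)$ is a classical nondegenerate Gaussian multiplicative chaos and hence $\mathbb{E}[\mu_{\text{H}}(Q^{\text{U}}_r)^{p}]>0$, the right-hand side is infinite. This proves explosion for all $p>\frac{2}{\gamma^2}$.
\end{proof}

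The one genuinely delicate point is the boundary case $p=\frac{2}{\gamma^2}$, where $1-\overline{\zeta}(p)=0$ and the geometric series above degenerates to $\sum_n 1\cdot\mathbb{E}[\mu_{\text{H}}(Q^{\text{U}}_r)^{p}]$, which still diverges since each summand is a fixed positive constant — so the same display closes the critical case directly, provided the superadditivity step is valid at $p>1$. The subtlety I would watch for is the rigorous justification of the superadditive passage $\mathbb{E}[(\sum_j \mu_{\text{H}}(R_j))^{p}]\geq\sum_j\mathbb{E}[\mu_{\text{H}}(R_j)^{p}]$ for an \emph{infinite} family: one should either truncate to finitely many rectangles, apply the elementary inequality $(\sum a_j)^{p}\geq\sum a_j^{p}$ for $p\geq1$ on the finite sum, and then let the truncation exhaust $Q_r$ via monotone convergence, or invoke it termwise. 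This is the main (though minor) obstacle, and it is handled cleanly by monotonicity; no decorrelation machinery is needed here, which is precisely why this direction is the simple one in the small-$\gamma$ regime.
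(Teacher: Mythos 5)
Your proof is correct and rests on the same ingredients as the paper's own argument: the Whitney decomposition, the boundary scaling relation of Lemma~\ref{lem:HyperbolicScaling}, superadditivity of $t\mapsto t^{p}$ for $p\geq 1$, and the sign of $1-\overline{\zeta}(p)$ from Proposition~\ref{prop:TrivialProposition}. The only difference is presentational: the paper runs a one-step contradiction on the three-piece decomposition (finiteness would give $\mathbb{E}[\mu_{\text{H}}(Q)^{p}]>2^{1-\overline{\zeta}(p)}\mathbb{E}[\mu_{\text{H}}(Q)^{p}]$, forcing $1-\overline{\zeta}(p)<0$), whereas you unroll the same iteration into a directly divergent series over the full infinite decomposition; both versions cover the critical case $p=\frac{2}{\gamma^2}$.
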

\begin{proof}
    Suppose the contrary, that for some $Q\in\mathcal{Q}^{*}_{\mathbb{R}}$ and $1<\frac{2}{\gamma^2}\leq p<\frac{4}{\gamma^2}$, we have
    \begin{equation*}
        \mathbb{E}[\mu_{\text{H}}(Q)^{p}]<\infty.
    \end{equation*}
    There is no loss in generality by translation invariance, and we can always pass to a smaller Carleson cube if necessary, so we take $Q=Q_r=Q_{[-r,r]}$ in the rest of the proof.

    Decompose $Q_r$ into three parts $Q_r^{\text{L}}, Q_r^{\text{R}}$ and $Q_r^{\text{U}}$ as explained in Section~\ref{sec:Strategy}. By the boundary scaling relation of Lemma~\ref{lem:HyperbolicScaling}, we have
    \begin{equation*}
        \mathbb{E}[\mu_{\text{H}}(Q_r^{\text{L}})^{p}]=\mathbb{E}[\mu_{\text{H}}(Q_r^{\text{R}})^{p}]=2^{-\overline{\zeta}(p)}\mathbb{E}[\mu_{\text{H}}(Q)^{p}],
    \end{equation*}
    while the upper part $Q_r^{\text{U}}$ behaves like a classical Gaussian multiplicative chaos and
    \begin{equation*}
        \mathbb{E}[\mu_{\text{H}}(Q_r^{\text{U}})^{p}]<\infty,
    \end{equation*}
    since we assumed that $p<\frac{4}{\gamma^2}$ (without this assumption, the above moment explodes already). It follows from the superadditivity inequality that
    \begin{equation*}
        \mathbb{E}[\mu_{\text{H}}(Q)^{p}]\geq \mathbb{E}[\mu_{\text{H}}(Q_r^{\text{L}})^{p}]+\mathbb{E}[\mu_{\text{H}}(Q_r^{\text{R}})^{p}]+\mathbb{E}[\mu_{\text{H}}(Q_r^{\text{U}})^{p}]>2^{1-\overline{\zeta}(p)}\mathbb{E}[\mu_{\text{H}}(Q)^{p}],
    \end{equation*}
    which implies that $\mathbb{E}[\mu_{\text{H}}(Q)^{p}]>2^{1-\overline{\zeta}(p)}\mathbb{E}[\mu_{\text{H}}(Q)^{p}]$, so that $1-\overline{\zeta}(p)<0$. But this contradicts Proposition~\ref{prop:TrivialProposition}, since we assumed that $\frac{2}{\gamma^2}\leq p$.
\end{proof}

\subsection{Optimality of the positive moment bound}\label{sec:DecorrelationPlusSokoban}
We now show the other direction. By~\eqref{eq:FiniteFirstMoment} and the fact that the classical Gaussian multiplicative chaos has finite negative moments of all order, we restrict to the moments with $p>1$.
\begin{theo}[The extended Seiberg bound for small coupling constants]\label{th:SeibergLargeCoupling}
Let $\gamma\in(0,\sqrt{2})$ and $Q\in\mathcal{Q}^{*}_{\mathbb{R}}$. Then for any $1<p<\frac{2}{\gamma^2}$,
\begin{equation*}
    \mathbb{E}[\mu_{\text{H}}(Q)^{p}]<\infty.
\end{equation*}
\end{theo}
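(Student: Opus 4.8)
The plan is to extend the recursion of Kahane--Peyri\`ere~\cite{kahane1976certaines} to the boundary setting, substituting the exact independence of the cascade construction with the approximate factorization supplied by the Gaussian decorrelation inequality of Corollary~\ref{coro:AlmostFactorization}. By translation invariance it suffices to treat $Q=Q_r=Q_{[-r,r]}$; since $\gamma\in(0,\sqrt{2})$ the first moment is finite by~\eqref{eq:FiniteFirstMoment}, and moments of order $\leq 1$ (in particular all negative and sub-unit moments) are then automatically finite, so I focus on $1<p<\frac{2}{\gamma^2}$. To manipulate the $p$-th moment without presupposing its finiteness, I would run the argument at the level of the regularized measures $\mu_{\text{H},\epsilon}$, obtain a bound uniform in the cutoff $\epsilon$, and pass to the limit by Fatou's lemma.

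The core is an induction over the integer intervals $p\in(k,k+1]$, the base case $p\leq 1$ being the finiteness of the first moment. For the inductive step I apply the Whitney decomposition $Q_r=Q_r^{\text{L}}\cup Q_r^{\text{R}}\cup Q_r^{\text{U}}$ of Section~\ref{sec:Strategy} together with an elementary Kahane--Peyri\`ere type inequality bounding $(\mu_{\text{H}}(Q_r^{\text{L}})+\mu_{\text{H}}(Q_r^{\text{R}})+\mu_{\text{H}}(Q_r^{\text{U}}))^p$ by the pure powers $\mu_{\text{H}}(Q_r^{\text{L}})^p+\mu_{\text{H}}(Q_r^{\text{R}})^p$ plus a finite sum of cross terms, each a product of the three pieces raised to exponents strictly below $p$. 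The two pure lower terms are evaluated by the boundary scaling relation of Lemma~\ref{lem:HyperbolicScaling}, which gives $\mathbb{E}[\mu_{\text{H}}(Q_r^{\text{L}})^p]=\mathbb{E}[\mu_{\text{H}}(Q_r^{\text{R}})^p]=2^{-\overline{\zeta}(p)}\mathbb{E}[\mu_{\text{H}}(Q_r)^p]$, so their joint contribution is the self-referential factor $2^{1-\overline{\zeta}(p)}\mathbb{E}[\mu_{\text{H}}(Q_r)^p]$. The upper piece $Q_r^{\text{U}}$ is a classical Gaussian multiplicative chaos with moments up to order $\frac{4}{\gamma^2}>\frac{2}{\gamma^2}$, so it only adds a finite quantity.

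The decisive step is the control of the cross terms $\mathbb{E}[\mu_{\text{H}}(Q_r^{\text{L}})^{a}\mu_{\text{H}}(Q_r^{\text{R}})^{b}]$ with $a+b=p$ and $a,b<p$. Here the Gaussian decorrelation inequality of Corollary~\ref{coro:AlmostFactorization} replaces each such quantity, up to a tunable error, by the product $\mathbb{E}[\mu_{\text{H}}(Q_r^{\text{L}})^{a}]\,\mathbb{E}[\mu_{\text{H}}(Q_r^{\text{R}})^{b}]$; since $a$ lies in a strictly lower integer interval and $b$ is likewise an integer or sub-unit exponent below $p$, both factors are finite by the induction hypothesis and by scaling. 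Keeping track of which factors remain coupled and which have been decorrelated across the multinomial expansion---and, once the decomposition is iterated over dyadic scales, ensuring that the resulting contributions are summable and strictly controlled by the contraction factor---is precisely the role of the combinatorial Sokoban lemma.

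Assembling these pieces produces an inequality of the shape $\mathbb{E}[\mu_{\text{H}}(Q_r)^p]\leq 2^{1-\overline{\zeta}(p)}\mathbb{E}[\mu_{\text{H}}(Q_r)^p]+C$ with a finite constant $C=C(p,\gamma)$. By Proposition~\ref{prop:TrivialProposition} one has $1-\overline{\zeta}(p)<0$ whenever $\frac{1}{2}<p<\frac{2}{\gamma^2}$, so $2^{1-\overline{\zeta}(p)}<1$ and the self-referential term can be absorbed, yielding $\mathbb{E}[\mu_{\text{H}}(Q_r)^p]<\infty$. I expect the principal obstacle to be the quantitative balance in this absorption: the decorrelation inequality only factorizes approximately, with an error that must be kept below the contraction margin $1-2^{1-\overline{\zeta}(p)}$, and the Sokoban bookkeeping has to guarantee that, after summing the cross-term errors over all scales and all cross terms in the expansion, the accumulated remainder stays finite and does not overwhelm the absorption step.
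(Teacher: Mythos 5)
Your proposal is correct and follows essentially the same route as the paper: induction over $p\in(k,k+1]$ \`a la Kahane--Peyri\`ere, the Whitney decomposition with the boundary scaling relation producing the contraction factor $2^{1-\overline{\zeta}(p)}<1$, and the cross term split into a $\delta$-separated part (Slepian factorization, bounded via the induction hypothesis) and an adjacent part (Sokoban estimate, absorbed into the contraction margin), all run at the level of regularized measures before passing to the limit. The only cosmetic imprecision is your description of the Sokoban lemma as multinomial bookkeeping across scales, whereas in the paper it is the box-moving argument that bounds the adjacent cross term $\mathbb{E}[\mu_{\text{H}}(Q^{\text{L}})^{k}\mu_{\text{H}}(Q^{\text{R},\delta})^{p-k}]$ by $C_p\delta^{p-k}\mathbb{E}[\mu_{\text{H}}(Q^{\text{L}})^{p}]$; this does not affect the validity of your argument.
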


We recall the arguments for the case $\gamma\in(0,\sqrt{2})$ based on an unpublished appendix of~\cite{huang2018liouville}. The general structure of the proof is inspired by that of~\cite{kahane1976certaines}, and is supplemented with more advanced techniques involving Gaussian decorrelation inequalities together with a combinatorial manipulation, for which we refer to as the Sokoban lemma.

\medskip

We already know by~\eqref{eq:FiniteFirstMoment} that the first moment of $\mu_{\text{H}}(Q)$ is finite, so we can proceed by induction on integers $k\geq 1$ such that $k<p\leq k+1$. We work with $Q=Q_r=Q_{[-r,r]}$, and several intuitions are worth being spotlighted at this point:
\begin{itemize}
    \item By Minkowski's inequality, for $1\leq p<\frac{2}{\gamma^2}$,
    \begin{equation}\label{eq:Minkowski}
    \begin{split}
        \left(\mathbb{E}[\mu_{\text{H}}(Q_r)^{p}]\right)^{1/p}&\leq \left(\mathbb{E}[\mu_{\text{H}}(Q^{\text{L}}_r+Q^{\text{R}}_r)^{p}]\right)^{1/p}+\mathbb{E}[\mu_{\text{H}}(Q^{\text{U}}_r)^{p}]^{1/p}\\
        &\leq \left(\mathbb{E}[\mu_{\text{H}}(Q^{\text{L}}_r+Q^{\text{R}}_r)^{p}]\right)^{1/p}+C_{r,p},
    \end{split}
    \end{equation}
    where $C_{r,p}$ is some constant depending on $r$ and $p$, it is finite since $p<\frac{4}{\gamma^2}$, the latter being the moment bound for the classical Gaussian multiplicative chaos $\mu_{\text{H}}(Q^{\text{U}}_r)$.
    \item If $p=k+1$ were an integer, then by the binomial formula, $\mathbb{E}[\left(\mu_{\text{H}}(Q^{\text{L}}_r)+\mu_{\text{H}}(Q^{\text{R}}_r)\right)^{p}]$ when $p$ is close to $\frac{2}{\gamma^2}$, shoud behave like $\mathbb{E}[\mu_{\text{H}}(Q^{\text{L}}_r)^{p}]+\mathbb{E}[\mu_{\text{H}}(Q^{\text{R}}_r)^{p}]$, up to some cross terms of the form $\mathbb{E}[\mu_{\text{H}}(Q^{\text{L}}_r)^{k}\mu_{\text{H}}(Q^{\text{R}}_r)^{p-k}]$. A more general form of this observation for non-integer $p$ is recalled in Appendix~\ref{sec:Elementary}.
    \item The cross terms of type $\mathbb{E}[\mu_{\text{H}}(Q^{\text{L}}_r)^{k}\mu_{\text{H}}(Q^{\text{R}}_r)^{p-k}]$, when we suppose independence of $\mu_{\text{H}}(Q^{\text{L}}_r)$ and $\mu_{\text{H}}(Q^{\text{R}}_r)$, can be factorized. Each factor is then bounded since the exponents $k$ or $p-k$ are away from the critical moment bound $\frac{2}{\gamma^2}$ by the induction assumption. In reality some dependence exists, and we will use Gaussian decorrelation techniques to give a precise form of this type of bounds.
    \item Finally, by the boundary scaling relation of Lemma~\ref{lem:HyperbolicScaling}, each term $\mathbb{E}[\mu_{\text{H}}(Q^{\text{L}}_r)^{p}]$ is comparable to the original term $\mathbb{E}[\mu_{\text{H}}(Q_r)^{p}]$. We are left to check the sign of the exponent as we have done in the previous proofs.
\end{itemize}

The proof of Theorem~\ref{th:SeibergLargeCoupling} is built on rigorous versions of the above observations, especially on the item where Gaussian decorrelation should be treated carefully. One extra difficulty is due to the fact that $\mathbb{E}[\mu_{\text{H}}(Q_r)^{p}]$ might be infinite to begin with, so we need to go through the regularization procedure for the Gaussian multiplicative chaos measure from the very start. For our tranquillity in reading the rather lengthy proof, it might be useful to point out that the regularization does not reflect the idea of the proof and is only a technical requirement for mathematical rigor. For better readability, some technical details in the following proof are postponed to Section~\ref{sec:GaussianEstimates}.

\begin{proof}[Proof of Theorem~\ref{th:SeibergLargeCoupling}]
The proof is divide into several steps.

\medskip

$\bullet$ \textbf{Step 1: a refined Whitney decomposition.} Without loss of generality, consider $Q=Q_r=Q_{[-r,r]}$ and its Whitney-type decomposition as in Section~\ref{sec:Strategy}. Due to the notationally heavy indices that will appear in the proof, we drop the $r$ index whenever we can in this proof. In view of applying Gaussian decorrelation inequalities, we slightly modify the lower parts of the decomposition. Intuitively, $Q^{\text{L}}$ and $Q^{\text{R}}$ should behave like independent random variables up to a certain multiplicative constant, except for the region near the vertical line where meet. We thus single out this region and treat it separately.

Take a large integer $N\geq 1$ that we fix later, and let $\delta=\frac{1}{N}$ be a small cut-off parameter. Denote by
\begin{equation}
    Q^{\text{L},\delta}\coloneqq [-\delta,0]\times[0,r],\quad Q^{\text{R},\delta}\coloneqq [0,\delta]\times[0,r],
\end{equation}
the $\delta$-slices near the boundary between $Q^{\text{L}}$ and $Q^{\text{R}}$. We also denote, for $1\leq j\leq N$,
\begin{equation*}
    Q^{\text{L},j}\coloneqq [-j\delta,-(j-1)\delta]\times [0,r],\quad Q^{\text{R}, j}\coloneqq [(j-1)\delta,j\delta]\times [0,r],
\end{equation*}
their horizontal translations. See Figure~\ref{fig:RefinedRectangles} for an illustration.

\begin{figure}[h]
\centering
\includegraphics[height=15em]{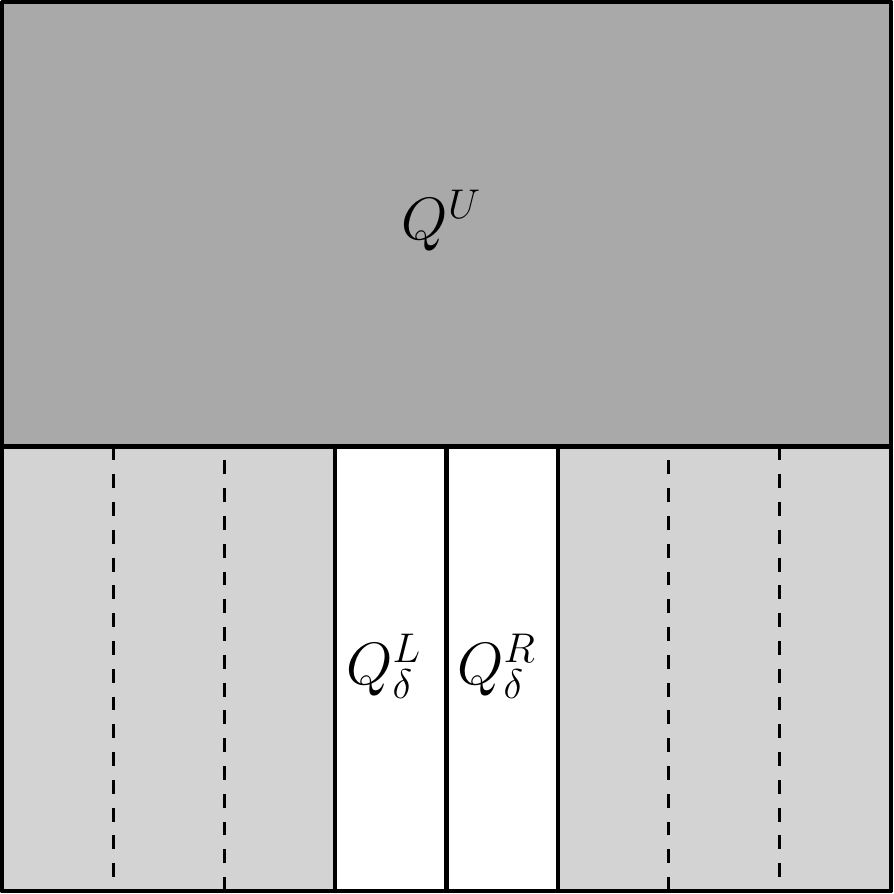}
\caption{Refined Whitney decomposition with slices in the lower parts.}
\label{fig:RefinedRectangles}
\end{figure}

It follows that, modulo some lines where the above-defined rectangles meet (on which the Gaussian multiplicative chaos mass is almost surely null), we have the following partition:
\begin{equation*}
    Q^{\text{L}}=\coprod_{j=1}^{N}Q^{\text{L},j}\eqqcolon Q^{\text{L},\delta}\coprod Q^{\text{L},r-\delta},\quad Q^{\text{R}}=\coprod_{j=1}^{N}Q^{\text{R},j}\eqqcolon Q^{\text{R},\delta}\coprod Q^{\text{R},r-\delta},
\end{equation*}
where we denoted by $Q^{\text{L},r-\delta}$ the complement of $Q^{\text{L},\delta}$ in $Q^{\text{L}}$ (similarly for $Q^{\text{R},r-\delta}$).

\medskip

$\bullet$ \textbf{Step 2: induction scheme and cross-term decomposition.}
The proof goes by induction: we show that if $k\geq 1$, $p\in(k,k+1]$ and $p<\frac{2}{\gamma^2}$, then $\mathbb{E}[\mu_{\text{H}}(Q)^{k}]<\infty$ implies that $\mathbb{E}[\mu_{\text{H}}(Q)^{p}]<\infty$. The initialization $k=1$ is already known by~\eqref{eq:FiniteFirstMoment}.

As is alluded prior to this proof, we should do everything in this proof with regularized Gaussian multiplicative chaos measures. We denote by $\mu_{\text{H},n}$ the $2^{-n}$-regularized Gaussian multiplicative chaos measure as recalled in Lemma~\ref{lem:HyperbolicScaling}. We also denote by $Q_n$ (or $Q^{\text{L}}_n, Q^{\text{R}}_n$ etc.) as $Q\cap\{\text{Im}(y)>2^{-n}\mathrm{d}\}$ for some fixed constant $\mathrm{d}>0$, so that we stay slightly away from the boundary $\mathbb{R}=\partial\mathbb{H}$. The choice of $\mathrm{d}>0$ plays no essential role in the following, to fix ideas one can use $\mathrm{d}=r/2$ for example. These regularizations are only needed for properly initiating the induction scheme, and dropping them (i.e. the index $n$) will not affect the key ideas in the estimates.

The key to the induction is to use the elementary inequality of Proposition~\ref{lem:Muirhead}: in particular,
\begin{equation*}
\begin{split}
    \mathbb{E}[\mu_{\text{H},n}(Q^{\text{L}}_n+Q^{\text{R}}_n)^{p}]\leq 2\mathbb{E}[\mu_{\text{H},n}(Q^{\text{L}}_n)^{p}]+C_p\mathbb{E}[\mu_{\text{H},n}(Q^{\text{L}}_n)^{p}\mu_{\text{H},n}(Q^{\text{R}}_n)^{p-k}]
\end{split}
\end{equation*}
where we used the symmetry between $Q^{\text{L}}$ and $Q^{\text{R}}$ and the constant $C_p$ depends only on $p$.

By the same argument as the boundary scaling relation of Lemma~\ref{lem:HyperbolicScaling}, we rewrite this as
\begin{equation*}
    \mathbb{E}[\mu_{\text{H},n}(Q^{\text{L}}_n+Q^{\text{R}}_n)^{p}]\leq 2^{1-\overline{\zeta}(p)}\mathbb{E}[\mu_{\text{H},n-1}(Q_{n-1})^{p}]+C_p\mathbb{E}[\mu_{\text{H},n}(Q^{\text{L}}_n)^{p}\mu_{\text{H},n}(Q^{\text{R}}_n)^{p-k}].
\end{equation*}
Together with Minkowski's inequality as in~\eqref{eq:Minkowski} and the subadditivity inequality applied to $x\mapsto x^{1/p}$ (now that $p>1$), we arrive at
\begin{equation}\label{eq:Upshot0}
\begin{split}
    &\left(\mathbb{E}[\mu_{\text{H},n}(Q_n)^{p}]\right)^{1/p}\\
    \leq{}&\left(2^{1-\overline{\zeta}(p)}\mathbb{E}[\mu_{\text{H},n-1}(Q_{n-1})^{p}]\right)^{1/p}+\left(C_p\mathbb{E}[\mu_{\text{H},n}(Q^{\text{L}}_n)^{p}\mu_{\text{H},n}(Q^{\text{R}}_n)^{p-k}]\right)^{1/p}+C_{r,p}.
\end{split}
\end{equation}
Notice that by Proposition~\ref{prop:TrivialProposition}, the factor $2^{1-\overline{\zeta}(p)}<1$ since $1<p<\frac{2}{\gamma^2}$. It remains to see that the contribution from the cross term, i.e. middle term in the last equation above, should be small: this is the goal of the next step.

\medskip

$\bullet$ \textbf{Step 3: decorrelation and Sokoban estimates.}
Always under the assumption that for some integer $k$, $1\leq k<p\leq k+1$ and $p<\frac{2}{\gamma^2}$, we now control the cross term
\begin{equation*}
    \mathbb{E}[\mu_{\text{H},n}(Q^{\text{L}}_n)^{k}\mu_{\text{H},n}(Q^{\text{R}}_n)^{p-k}].
\end{equation*}
The idea is to apply the subadditivity inequality to the term $\mu_{\text{H}}(Q^{\text{R}})^{p-k}$, given that $p-k\leq 1$ by definition of $k$. Since 
\begin{equation*}
    Q^{\text{R}}=Q^{\text{R},\delta}\coprod Q^{\text{R},r-\delta},
\end{equation*}
where the notations for this partition are introduced in Step~1, we have
\begin{equation*}
    \mathbb{E}[\mu_{\text{H},n}(Q^{\text{L}}_n)^{k}\mu_{\text{H},n}(Q^{\text{R}}_n)^{p-k}]\leq \mathbb{E}[\mu_{\text{H},n}(Q^{\text{L}}_n)^{k}\mu_{\text{H},n}(Q^{\text{R},r-\delta}_n)^{p-k}]+\mathbb{E}[\mu_{\text{H},n}(Q^{\text{L}}_n)^{k}\mu_{\text{H},n}(Q^{\text{R},\delta}_n)^{p-k}].
\end{equation*}

We have two very different kinds of term above. The first term involves two regions $Q^{\text{L}}$ and $Q^{\text{R},\delta}$ that are separated by some positive cut-off distance $\delta>0$ (see Figure~\ref{fig:RefinedRectangles}). Therefore, we can use Gaussian decorrelation techniques directly and expect a factorization, modulo some multiplicative constant depending on $\delta$. The upshot is
\begin{equation}\label{eq:Upshot1}
\begin{split}
    \mathbb{E}[\mu_{\text{H},n}(Q^{\text{L}})^{k}\mu_{\text{H},n}(Q^{\text{R},r-\delta}_n)^{p-k}]&\leq \delta^{-2k(p-k)\gamma^2}\mathbb{E}[\mu_{\text{H},n}(Q^{\text{L}}_n)^{k}]\mathbb{E}[\mu_{\text{H},n}(Q^{\text{R},r-\delta}_n)^{p-k}]\\
    &\leq C_{p}\delta^{-2k(p-k)\gamma^2},
\end{split}
\end{equation}
where we used the induction hypothesis to bound the factorized expectations. A detailed proof of~\eqref{eq:Upshot1} is presented in Section~\ref{sec:TechnicalEstimates}.

The second term involves two regions $Q^{\text{L}}$ and $Q^{\text{R},\delta}$ that are adjacent (see Figure~\ref{fig:RefinedRectangles}). Therefore, no factorization as in the case of the first term can be expected. Instead, we claim that by choosing $\delta$ small enough, we can make it as small as possible in front of $\mathbb{E}[\mu_{\text{H}}(Q^{\text{L}})^{p}]$. The idea, very informally, is that in average, product of adjacent boxes cannot be much larger than the product of the same box. To do this, we rely on quite heavy manipulations of Gaussian decorrelation inequalities to move the rectangle $Q^{\text{R},\delta}$ around each $\delta$-slices $Q^{\text{L},j}$ (see Step~1 for the definition of the latter). We refer to the moving operations as the Sokoban lemma. The upshot is
\begin{equation}\label{eq:Upshot2}
    \mathbb{E}[\mu_{\text{H},n}(Q^{\text{L}}_n)^{k}\mu_{\text{H},n}(Q^{\text{R},\delta}_n)^{p-k}]\leq C_p\delta^{p-k}\mathbb{E}[\mu_{\text{H},n}(Q^{\text{L}}_n)^{p}].
\end{equation}
The detailed proof of~\eqref{eq:Upshot2} is presented in Section~\ref{sec:SokobanEstimates}.

\medskip

$\bullet$ \textbf{Step 4: putting things together and conclusion.}
It remains to choose the correct cut-off $\delta>0$ to conclude the proof. By combining~\eqref{eq:Upshot0}, \eqref{eq:Upshot1} and \eqref{eq:Upshot2}, we have
\begin{equation*}
\begin{split}
    &\left(\mathbb{E}[\mu_{\text{H},n}(Q_n)^{p}]\right)^{1/p}\\
    \leq{}&\left(2^{1-\overline{\zeta}(p)}\mathbb{E}[\mu_{\text{H},n-1}(Q_{n-1})^{p}]\right)^{1/p}+(C_p\delta^{p-k}\mathbb{E}[\mu_{\text{H},n}(Q^{\text{L}}_n)^{p}])^{1/p}+(C_{p}\delta^{-2k(p-k)\gamma^2})^{1/p}+C_{p}
\end{split}
\end{equation*}
where subadditivity inequality is used. We now choose $\delta>0$ such that
\begin{equation*}
    \frac{2^{(1-\overline{\zeta}(p))\frac{1}{p}}}{1-C_p^{1/p}\delta^{\frac{p-k}{p}}}\eqqcolon\alpha_p<1.
\end{equation*}
This is possible by Proposition~\ref{prop:TrivialProposition} and the assumption that $1\leq p<\frac{2}{\gamma^2}$. With this choice of $\delta$, we have (where $C_{p,\delta}$ changes from line to line)
\begin{equation*}
    \left(\mathbb{E}[\mu_{\text{H},n}(Q_n)^{p}]\right)^{1/p}\leq \alpha_p\left(\mathbb{E}[\mu_{\text{H},n-1}(Q_{n-1})^{p}]\right)^{1/p}+C_{p,\delta}.
\end{equation*}
It is plain to check that by iteration, this implies
\begin{equation*}
    \limsup_{n}\mathbb{E}[\mu_{\text{H},n}(Q_n)^{p}]<\infty.
\end{equation*}
This finishes our induction scheme. It remains to observe that the above equation implies that for all $1\leq p<\frac{2}{\gamma^2}$,
\begin{equation*}
    \mathbb{E}[\mu_{\text{H}}(Q)^{p}]<\infty
\end{equation*}
This completes the proof.
\end{proof}

\section{Gaussian decorrelation techniques and the Sokoban lemma}\label{sec:GaussianEstimates}
We gather in this section some techniques related to Gaussian decorrelation inequalities.

\subsection{Some technical estimates}\label{sec:TechnicalEstimates}
The goal here is to prove Equation~\eqref{eq:Upshot1} using the Gaussian decorrelation inequality of Lemma~\ref{lem:Slepian}. For simplicity we drop the index $n$ below since the proof works independently of this regularization. Let us recast~\eqref{eq:Upshot1} in the form of a proposition (notice that we relax the requirement that $k$ is integer, since we need this in the general case later).
\begin{prop}
Let $k>0$ and $p\in[k,k+1]$. Consider $Q^{\text{L}}$ and $Q^{\text{R},r-\delta}$, two boundary boxes separated by some positive distance $\delta>0$. Then we have the following estimate on their cross moment:
\begin{equation*}
    \mathbb{E}[\mu_{\text{H}}(Q^{\text{L}})^{k}\mu_{\text{H}}(Q^{\text{R},r-\delta})^{p-k}]\leq \delta^{-2k(p-k)\gamma^2}\mathbb{E}[\mu_{\text{H}}(Q^{\text{L}})^{k}]\mathbb{E}[\mu_{\text{H}}(Q^{\text{R},r-\delta})^{p-k}].
\end{equation*}
\end{prop}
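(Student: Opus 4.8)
The plan is to realize the inequality as a quantitative Gaussian decorrelation estimate, obtained by interpolating the cross-covariance between the two separated boxes and applying the decorrelation inequality of Lemma~\ref{lem:Slepian}. The single geometric input is that $Q^{\text{L}}$ and $Q^{\text{R},r-\delta}$ lie at horizontal distance at least $\delta$: for $z\in Q^{\text{L}}$ (so $\operatorname{Re}z\le 0$) and $w\in Q^{\text{R},r-\delta}$ (so $\operatorname{Re}w\ge\delta$) one has $|z-w|\ge\delta$, and since $\operatorname{Re}\overline{w}=\operatorname{Re}w\ge\delta$ also $|z-\overline{w}|\ge\delta$. Hence the cross-covariance of the exact boundary scaling kernel is uniformly bounded,
\[
K_{\mathbb{H}}(z,w)=-\ln|z-w|-\ln|z-\overline{w}|\le 2\ln\tfrac{1}{\delta},\qquad z\in Q^{\text{L}},\ w\in Q^{\text{R},r-\delta}.
\]

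First I would set up the interpolation at the regularized level. Working with the $2^{-n}$-regularized field, let $X_t$, $t\in[0,1]$, be the Gaussian field on $Q^{\text{L}}\cup Q^{\text{R},r-\delta}$ whose within-box covariances coincide with $K_{\mathbb{H}}$ but whose cross entries $K_{\mathbb{H}}(z,w)$ (with $z,w$ in different boxes) are multiplied by $t$. At $t=1$ this is the true coupled field and at $t=0$ the two boxes are independent. Because the within-box covariances, and in particular the variances entering the Wick normalization, do not depend on $t$, the two marginal masses retain their laws, so if $F(t)\coloneqq\mathbb{E}[\mu_{\text{H}}(Q^{\text{L}})^{k}\mu_{\text{H}}(Q^{\text{R},r-\delta})^{p-k}]$ is computed with $X_t$, then $F(0)=\mathbb{E}[\mu_{\text{H}}(Q^{\text{L}})^{k}]\,\mathbb{E}[\mu_{\text{H}}(Q^{\text{R},r-\delta})^{p-k}]$.

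Then I would differentiate $F$ in $t$. Gaussian integration by parts against $\frac{d}{dt}(tK_{\mathbb{H}}(z,w))=K_{\mathbb{H}}(z,w)$, together with the cancellation of the variance counterterm coming from the Wick normalization, yields
\[
F'(t)=\gamma^{2}k(p-k)\,\mathbb{E}\!\left[\mu_{\text{H}}(Q^{\text{L}})^{k-1}\mu_{\text{H}}(Q^{\text{R},r-\delta})^{p-k-1}\!\int_{Q^{\text{L}}}\!\int_{Q^{\text{R},r-\delta}}\!K_{\mathbb{H}}(z,w)\,\nu_t(dz)\,\nu_t(dw)\right],
\]
where $\nu_t$ denotes the hyperbolic GMC measure of $X_t$. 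Since $k,p-k>0$ and $K_{\mathbb{H}}\ge 0$ on the domain, $F'(t)\ge 0$; bounding $K_{\mathbb{H}}(z,w)\le 2\ln\frac1\delta$ then gives $F'(t)\le 2\gamma^{2}k(p-k)\ln\tfrac1\delta\,F(t)$, and Gronwall's inequality produces $F(1)\le\delta^{-2\gamma^{2}k(p-k)}F(0)$, which is exactly the asserted bound.

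The main obstacle is the rigorous justification of the differentiation and the integration-by-parts step when $k-1$ or $p-k-1$ is negative (as happens once $k<1$ or $p-k<1$), since the integrand then involves negative powers of the GMC masses. I would carry this out at the regularized level $n<\infty$, where $X_t$ is smooth and the masses are bounded below by positive random variables so that $F$ is genuinely $C^1$, controlling the singular factors by the finiteness of all negative moments of the classical Gaussian multiplicative chaos (splitting the two boxes with H\"older's inequality). Finally I would pass to the $n\to\infty$ limit, the constant $\delta^{-2\gamma^{2}k(p-k)}$ being uniform in $n$. As an alternative that sidesteps differentiating non-integer powers, one may instead invoke Kahane's convexity inequality (Lemma~\ref{lem:KahaneConvexity}) applied to the supermodular functional $(x,y)\mapsto x^{k}y^{p-k}$, comparing the coupled field to the independent one and absorbing the bounded cross-covariance into the multiplicative constant.
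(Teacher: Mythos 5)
Your argument is correct and rests on the same two pillars as the paper's proof: the geometric observation that the $\delta$-separation forces the cross-covariance to satisfy $K_{\mathbb{H}}(z,w)=-\ln|z-w||z-\overline{w}|\le -2\ln\delta$ on $Q^{\text{L}}\times Q^{\text{R},r-\delta}$, and a Gaussian comparison exploiting the supermodularity of $(x,y)\mapsto x^{k}y^{p-k}$ for $k,p-k>0$. The execution differs: you run the covariance interpolation by hand and close the estimate with Gronwall, whereas the paper applies Lemma~\ref{lem:Slepian} as a black box to two auxiliary fields --- the decoupled field plus a \emph{common} Gaussian $\sqrt{-2\ln\delta}\,N$ (whose cross-covariance $-2\ln\delta$ dominates the true one) versus the true field plus \emph{independent} Gaussians of the same variance in each box --- so that the constant $\delta^{-2\gamma^{2}k(p-k)}$ emerges as a ratio of exponential moments of these auxiliary Gaussians rather than from a differential inequality. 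The two routes are equivalent in substance (Slepian's lemma is itself proved by your interpolation), and your version has the merit of making the sign of $F'(t)$ and the origin of the exponent completely explicit; the paper's version avoids having to justify differentiating non-integer powers of the masses, which is precisely the technical point you correctly flag and handle at the regularized level. One small correction to your closing remark: Lemma~\ref{lem:KahaneConvexity} as stated applies to \emph{convex} functionals of a single weighted sum and does not directly cover the product functional $x^{k}y^{p-k}$; the appropriate black box for a supermodular functional of two masses is Lemma~\ref{lem:Slepian} with $A=(Q^{\text{L}}\times Q^{\text{R},r-\delta})\cup(Q^{\text{R},r-\delta}\times Q^{\text{L}})$ and $B=\emptyset$, which is what the paper uses.
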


\begin{proof}
Make two independent copies of the fields $X$ defined on $Q^{\text{L}}$ and $Q^{\text{R},r-\delta}$, and denote them by $X^{\text{L}}$ and $X^{\text{R},r-\delta}$. Consider also some independent standard Gaussian random variables $N, N^{\text{L}}$ and $N^{\text{R},r-\delta}$. Consider the following Gaussian fields defined on $T=Q^{\text{L}}\cup Q^{\text{R},r-\delta}$:
\begin{equation*}
    X^{\text{L}}\mathbf{1}_{Q^{\text{L}}}+X^{\text{R},r-\delta}\mathbf{1}_{Q^{\text{R},r-\delta}}+\sqrt{-2\ln\delta}N \quad\text{and}\quad X+\sqrt{-2\ln\delta}N^{\text{L}}\mathbf{1}_{Q^{\text{L}}}+\sqrt{-2\ln\delta}N^{\text{R},r-\delta}\mathbf{1}_{Q^{\text{R},r-\delta}}.
\end{equation*}
The first field dominates the second field in covariance on $Q^{\text{L}}\times Q^{\text{R},r-\delta}\subset T^2$ (see Appendix~\ref{app:Decorrelation} and Lemma~\ref{lem:Slepian} for the definition), and their covariances are equal elsewhere on $T^2$. Writing a discretized version of the product $\mu_{\text{H}}(Q^{\text{L}})^{k}\mu_{\text{H}}(Q^{\text{R},r-\delta})^{p-k}$ as functional of the Gaussian field $X$, it is also standard to verify the partial derivative condition of Lemma~\ref{lem:Slepian}. Therefore, Lemma~\ref{lem:Slepian} yields
\begin{equation*}
\begin{split}
    &\mathbb{E}[e^{\gamma k \sqrt{-2\ln\delta}N^{\text{L}}}]\mathbb{E}[e^{\gamma (p-k) \sqrt{-2\ln\delta}N^{\text{R},r-\delta}}]\mathbb{E}[\mu_{\text{H}}(Q^{\text{L}})^{k}\mu_{\text{H}}(Q^{\text{R},r-\delta})^{p-k}]\\
    \leq{}&\mathbb{E}[e^{\gamma k \sqrt{-2\ln\delta}N} e^{\gamma (p-k) \sqrt{-2\ln\delta}N}]\mathbb{E}[\mu_{\text{H}}(Q^{\text{L}})^{k}]\mathbb{E}[\mu_{\text{H}}(Q^{\text{R},r-\delta})^{p-k}].
\end{split}
\end{equation*}
Rearranging we get
\begin{equation*}
    \mathbb{E}[\mu_{\text{H}}(Q^{\text{L}})^{k}\mu_{\text{H}}(Q^{\text{R},r-\delta})^{p-k}]\leq e^{2\gamma^2k(p-k)(-\ln\delta)}\mathbb{E}[\mu_{\text{H}}(Q^{\text{L}})^{k}]\mathbb{E}[\mu_{\text{H}}(Q^{\text{R},r-\delta})^{p-k}].
\end{equation*}
This yields the proposition.
\end{proof}

\subsection{A Sokoban lemma}\label{sec:SokobanEstimates}
The goal here is to prove Equation~\eqref{eq:Upshot2}. Again, we drop the regularization index $n$ since it is of no importance here. We recast~\eqref{eq:Upshot2} in the form of a lemma (again, notice that we relax the requirement that $k$ is integer).
\begin{lemm}[The Sokoban lemma]
Let $k>0$ and $p\in[k,k+1]$. Consider $Q^{\text{L}}$ and $Q^{\text{R},\delta}$, two adjacent boundary boxes. Then we have the following estimate on their cross moment:
\begin{equation*}
    \mathbb{E}[\mu_{\text{H}}(Q^{\text{L}})^{k}\mu_{\text{H},n}(Q^{\text{R},\delta})^{p-k}]\leq C_p\delta^{p-k}\mathbb{E}[\mu_{\text{H}}(Q^{\text{L}})^{p}].
\end{equation*}
\end{lemm}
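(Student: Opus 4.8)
The plan is to realize the Sokoban metaphor literally: bound the cross moment of the exterior slice $Q^{\text{R},\delta}$ against $\mu_{\text{H}}(Q^{\text{L}})^k$ by the \emph{average} of the corresponding cross moments obtained after pushing the slice onto each of the interior $\delta$-slices $Q^{\text{L},j}$, $1\le j\le N$, introduced in Step~1 of the proof of Theorem~\ref{th:SeibergLargeCoupling}, and then to extract the gain $\delta^{p-k}$ from a deterministic concavity inequality. Throughout, the relevant functional $\mu_{\text{H}}(Q^{\text{L}})^k\mu_{\text{H}}(\,\cdot\,)^{p-k}$ is increasing in the covariance of the underlying field (as a product of exponential moments with nonnegative exponents), so that Gaussian comparison of Kahane--Slepian type (Lemma~\ref{lem:Slepian}, Lemma~\ref{lem:KahaneConvexity}) applies.

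First I would establish the \emph{elementary} endgame, assuming the comparison step. Since $N$ congruent slices $\{Q^{\text{L},j}\}_{j=1}^N$ tile $Q^{\text{L}}$ (modulo null lines) and $\delta\asymp 1/N$, the concavity of $x\mapsto x^{p-k}$ on $[0,\infty)$ (valid as $0\le p-k\le 1$) gives the pointwise bound
\[
\sum_{j=1}^N \mu_{\text{H}}(Q^{\text{L},j})^{p-k}\le N^{1-(p-k)}\Big(\sum_{j=1}^N \mu_{\text{H}}(Q^{\text{L},j})\Big)^{p-k}=N^{1-(p-k)}\mu_{\text{H}}(Q^{\text{L}})^{p-k}.
\]
Multiplying by $\mu_{\text{H}}(Q^{\text{L}})^k$ and taking expectations yields $\sum_{j=1}^N\mathbb{E}[\mu_{\text{H}}(Q^{\text{L}})^k\mu_{\text{H}}(Q^{\text{L},j})^{p-k}]\le N^{1-(p-k)}\mathbb{E}[\mu_{\text{H}}(Q^{\text{L}})^p]$. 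Hence, if the comparison step furnishes a constant $C_p$, uniform in $j$ and $N$, with $\mathbb{E}[\mu_{\text{H}}(Q^{\text{L}})^k\mu_{\text{H}}(Q^{\text{R},\delta})^{p-k}]\le C_p\,\mathbb{E}[\mu_{\text{H}}(Q^{\text{L}})^k\mu_{\text{H}}(Q^{\text{L},j})^{p-k}]$ for every $j$, then averaging over $j$ and inserting the displayed bound gives
\[
\mathbb{E}[\mu_{\text{H}}(Q^{\text{L}})^k\mu_{\text{H}}(Q^{\text{R},\delta})^{p-k}]\le \frac{C_p}{N}\,N^{1-(p-k)}\mathbb{E}[\mu_{\text{H}}(Q^{\text{L}})^p]=C_pN^{-(p-k)}\mathbb{E}[\mu_{\text{H}}(Q^{\text{L}})^p],
\]
which is exactly~\eqref{eq:Upshot2} up to renaming $C_p$, since $\delta\asymp 1/N$. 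The appearance of precisely the exponent $p-k$ is thus dictated by the concavity gain $N^{1-(p-k)}$.

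The crux is the per-slice comparison $\mathbb{E}[\mu_{\text{H}}(Q^{\text{L}})^k\mu_{\text{H}}(Q^{\text{R},\delta})^{p-k}]\le C_p\,\mathbb{E}[\mu_{\text{H}}(Q^{\text{L}})^k\mu_{\text{H}}(Q^{\text{L},j})^{p-k}]$ with a constant \emph{uniform in both $j$ and $N$}. Using independent copies of $X$ over $Q^{\text{L}}$ and over a generic $\delta$-strip, I would compare the exterior placement of the strip with its interior placement at $Q^{\text{L},j}$, keeping the self-covariance of $Q^{\text{L}}$ fixed and noting that both placements are width-$\delta$ strips of the same height range, so their self-covariances agree up to an $O(1)$ correction uniform in $j,N$. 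The decisive point is that an interior slice meets $Q^{\text{L}}$ on both sides, so its singular near-field cross-covariance with $Q^{\text{L}}$ dominates that of the exterior slice (which touches $Q^{\text{L}}$ only on the left), again up to a bounded discrepancy; the comparison is then closed in the direction ``interior dominates exterior'' by absorbing the $O(1)$ discrepancies into an auxiliary independent Gaussian shift $\sqrt{c}\,N$, exactly as in the proof of Corollary~\ref{coro:AlmostFactorization}, which contributes the multiplicative constant $C_p=e^{\gamma^2(p-k)^2c/2}$.

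I expect this uniform-in-$N$ covariance bookkeeping to be the main obstacle: one must check that the non-monotone part of the difference of the two logarithmic kernels, integrated against a $\delta$-strip and the fixed region $Q^{\text{L}}$, stays bounded as $\delta\to 0$ for \emph{every} target position $j$ (including the far slice $j=N$), so that the shift constant $c$, and hence $C_p$, does not degenerate. Once this is secured, the combination of the Kahane--Slepian move, the averaging over $j$, and the concavity inequality closes the proof of~\eqref{eq:Upshot2}.
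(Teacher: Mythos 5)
Your overall architecture is the same as the paper's: reduce the lemma to the per-slice comparison $\mathbb{E}[\mu_{\text{H}}(Q^{\text{L}})^{k}\mu_{\text{H}}(Q^{\text{R},\delta})^{p-k}]\leq C_{p}\,\mathbb{E}[\mu_{\text{H}}(Q^{\text{L}})^{k}\mu_{\text{H}}(Q^{\text{L},j})^{p-k}]$ with $C_p$ uniform in $j$, then average over $j$ and use the concavity bound $\sum_j x_j^{p-k}\le N^{1-(p-k)}\bigl(\sum_j x_j\bigr)^{p-k}$ to extract the factor $\delta^{p-k}$. That endgame is exactly Equation~\eqref{eq:Sokoban} and the Jensen step preceding it in the paper, and it is correct.

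The gap is in your proof of the per-slice comparison. You propose a single direct Slepian/Kahane move from the exterior placement $Q^{\text{R},\delta}$ to the interior placement $Q^{\text{L},j}$, absorbing the covariance discrepancy into an independent Gaussian shift $\sqrt{c}\,N$ with $c=O(1)$ uniform in $j$ and $N$. This cannot work for $j\geq 2$: Lemma~\ref{lem:Slepian} requires \emph{pointwise} domination of the cross-covariances, and the discrepancy here is unbounded. Take $w=(-\epsilon,y_0)\in Q^{\text{L}}$ and $z'=(\epsilon',y_0)\in Q^{\text{R},\delta}$: then $K_{\mathbb{H}}(z',w)\sim-\ln(\epsilon+\epsilon')\to+\infty$ as $\epsilon,\epsilon'\to0$, while the corresponding point $z\in Q^{\text{L},j}$ stays at distance at least $(j-1)\delta$ from $w$, so $K_{\mathbb{H}}(z,w)$ remains bounded. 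Hence $\sup\bigl(K_{\mathbb{H}}(z',w)-K_{\mathbb{H}}(z,w)\bigr)=+\infty$ near the common corner of $Q^{\text{L}}$ and $Q^{\text{R},\delta}$, and no finite shift constant $c$ (hence no finite $C_p=e^{\gamma^2(p-k)^2c/2}$) closes the comparison; an integrated, rather than pointwise, bound on the kernel difference is of no help for Slepian or Kahane. You flag this ``bookkeeping'' as the main obstacle but supply no mechanism to overcome it. The paper's mechanism is the two-step Sokoban maneuver: split $Q^{\text{L}}$ at the target slice into $Q^{\text{L},\leq j}$ and $Q^{\text{L},>j}$; against the far piece the move from $Q^{\text{R},\delta}$ to $Q^{\text{L},j}$ only \emph{increases} the cross-covariance pointwise (the ``moving away'' estimate~\eqref{eq:MovingAway} read in reverse), while against the near piece one uses the exact reflection symmetry of the kernel (the estimate~\eqref{eq:MovingReflect}), which matches the logarithmic near-field singularity exactly instead of trying to absorb it. You would need to add this (or an equivalent) splitting for your argument to go through.
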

We prove this in the case of $Q^{\text{L}}$, since the proof for the other case is almost identical.

\begin{proof}
By assumption $p-k\leq 1$, we can use Jensen's inequality to write (recall that $\delta=\frac{1}{N}$)
\begin{equation*}
    \mathbb{E}[\mu_{\text{H}}(Q^{\text{L}})^{p}]\geq \delta^{1-(p-k)}\sum_{j=1}^{N}\mathbb{E}[\mu_{\text{H}}(Q^{\text{L}})^{k}\mu_{\text{H}}(Q^{\text{L},j})^{p-k}],
\end{equation*}
where $Q^{\text{L},j}$ is defined in Step~1 of the proof of Theorem~\ref{th:SeibergLargeCoupling} as translations of $Q^{\text{L},\delta}$. Therefore, it suffices to show
\begin{equation}\label{eq:Sokoban}
    \mathbb{E}[\mu_{\text{H}}(Q^{\text{L}})^{k}\mu_{\text{H}}(Q^{\text{R},\delta})^{p-k}]\leq C_{p}\mathbb{E}[\mu_{\text{H}}(Q^{\text{L}})^{k}\mu_{\text{H}}(Q^{\text{L},j})^{p-k}]
\end{equation}
with a constant $C_p$ independent of $j\in\{1,\dots,N\}$.

This translates to the following heuristic statement, ``moving the box $Q^{\text{R},\delta}$ inside of $Q^{\text{L}}$ will not increase the order of the cross term estimate''. To prove this statement, we need several preliminaries around moving boxes and cross term estimates. These estimates can all be shown in a similar manner as in the proof of Equation~\eqref{eq:Upshot1} of Section~\ref{sec:TechnicalEstimates}, so we only give below the Gaussian fields on which to apply Lemma~\ref{lem:Slepian}.

\medskip

The first observation is the following statement, ``moving the box $Q^{\text{R},\delta}$ away from $Q^{\text{L}}$ will only decrease the cross term estimate''. More precisely, for all $j\in\{1,\dots,N\}$,
\begin{equation}\label{eq:MovingAway}
    \mathbb{E}[\mu_{\text{H}}(Q^{\text{L}})^{k}\mu_{\text{H}}(Q^{\text{R},\delta})^{p-k}]\geq \mathbb{E}[\mu_{\text{H}}(Q^{\text{L}})^{k}\mu_{\text{H}}(Q^{\text{R},j})^{p-k}].
\end{equation}
This can be shown by applying Lemma~\ref{lem:Slepian} to the following Gaussian fields with $T=Q^{\text{L}}\cup Q^{\text{R},\delta}$ and $A=(Q^{\text{L}}\times Q^{\text{R},\delta})\cup (Q^{\text{R},\delta}\times Q^{\text{L}})$, $B=\emptyset$:
\begin{equation*}
    X^{\text{L}}\mathbf{1}_{Q^{\text{L}}}+X^{\text{R},\delta}\mathbf{1}_{Q^{\text{R},\delta}} \quad\text{and}\quad X^{\text{L}}\mathbf{1}_{Q^{\text{L}}}+X^{\text{R},j}(\cdot+(j-1)(\delta,0))\mathbf{1}_{Q^{\text{R},\delta}},
\end{equation*}
where $X^{\text{R},j}(\cdot+(j-1)(\delta,0))\mathbf{1}_{Q^{\text{R},\delta}}$ denotes the horizontal translation of the box. See Figure~\ref{fig:MovingAway} for a sketch.

\begin{figure}[h]
\centering
\includegraphics[height=8em]{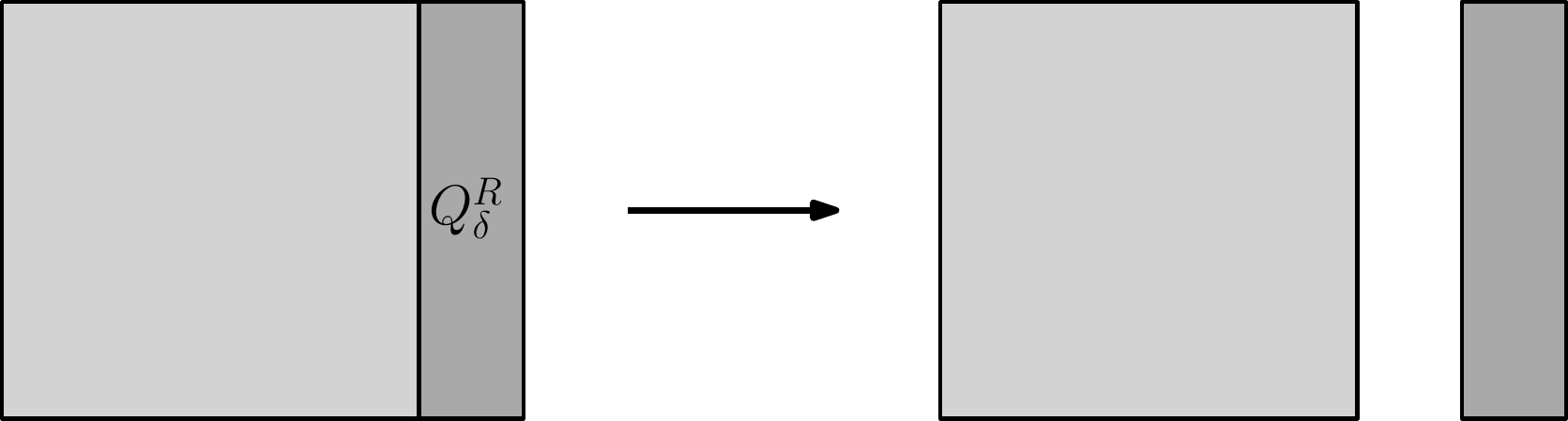}
\caption{Moving a box away from another box.}
\label{fig:MovingAway}
\end{figure}

\medskip

The second observation is the following statement, ``moving the box $Q^{\text{R},\delta}$ to $Q^{\text{L},\delta}$ will only increase the cross term estimate''. Another way of putting it, we reflect the box $Q^{\text{R},\delta}$ along the boundary between $Q^{\text{L}}$ and $Q^{\text{R}}$. More precisely,
\begin{equation}\label{eq:MovingReflect}
    \mathbb{E}[\mu_{\text{H}}(Q^{\text{L}})^{k}\mu_{\text{H}}(Q^{\text{R},\delta})^{p-k}]\leq \mathbb{E}[\mu_{\text{H}}(Q^{\text{L}})^{k}\mu_{\text{H}}(Q^{\text{L},\delta})^{p-k}].
\end{equation}
This can be shown by applying Lemma~\ref{lem:Slepian} to the following Gaussian fields with $T=Q^{\text{L}}\cup Q^{\text{R},\delta}$ and $A=(Q^{\text{L}}\times Q^{\text{R},\delta})\cup (Q^{\text{R},\delta}\times Q^{\text{L}})$, $B=\emptyset$:
\begin{equation*}
    X^{\text{L}}\mathbf{1}_{Q^{\text{L}}}+X^{\text{R},\delta}\mathbf{1}_{Q^{\text{R},\delta}} \quad\text{and}\quad X^{\text{L}}\mathbf{1}_{Q^{\text{L}}}+X^{\text{L},\delta}(-1 \times \overline{\cdot})\mathbf{1}_{Q^{\text{R},\delta}},
\end{equation*}
where $X^{\text{L},\delta}(-1 \times \overline{\cdot})\mathbf{1}_{Q^{\text{R},\delta}}$ denotes the reflection of the box with respect to the axis $x=0$. See Figure~\ref{fig:Reflection} for a sketch.

\begin{figure}[h]
\centering
\includegraphics[height=8em]{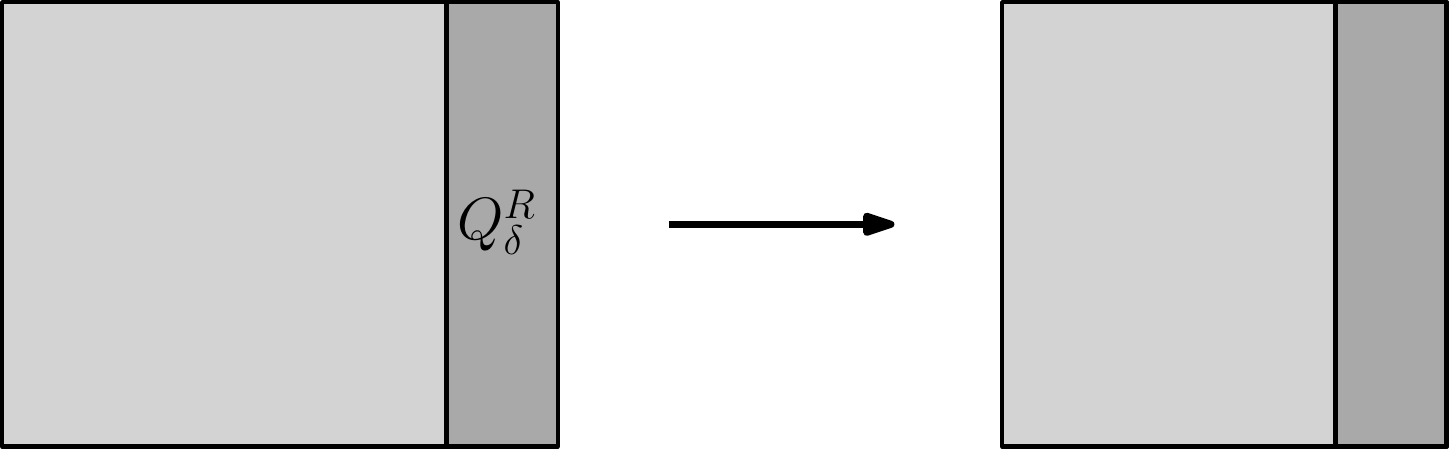}
\caption{Reflecting a box with respect to the common boundary.}
\label{fig:Reflection}
\end{figure}

We are now in a position to show Equation~\eqref{eq:Sokoban} using these box-moving estimates. The case $j=1$ is the reflecting case of the second observation above. For general $j\in\{1,\dots,n\}$, we cut $Q^{\text{L}}$ into two parts,
\begin{equation*}
    Q^{\text{L},\leq j}\coloneqq \bigcup_{i=1}^{j}Q^{\text{L},i}\quad\text{and}\quad Q^{\text{L},>j}\coloneqq \bigcup_{i=j+1}^{N}Q^{\text{L},i}
\end{equation*}
Then by Jensen's inequality, to show Equation~\eqref{eq:Sokoban} it suffices to show that
\begin{equation*}
    \mathbb{E}[\mu_{\text{H}}(Q^{\text{L},>j})^{k}\mu_{\text{H}}(Q^{\text{R},\delta})^{p-k}]\leq C_{p}\mathbb{E}[\mu_{\text{H}}(Q^{\text{L},>j})^{k}\mu_{\text{H}}(Q^{\text{L},j})^{p-k}]
\end{equation*}
and
\begin{equation*}
    \mathbb{E}[\mu_{\text{H}}(Q^{\text{L},\leq j})^{k}\mu_{\text{H}}(Q^{\text{R},\delta})^{p-k}]\leq C_{p}\mathbb{E}[\mu_{\text{H}}(Q^{\text{L},\leq j})^{k}\mu_{\text{H}}(Q^{\text{L},j})^{p-k}].
\end{equation*}
See Figure~\ref{fig:CuttingBoxes} for a sketch.

\begin{figure}[h]
\centering
\includegraphics[height=8em]{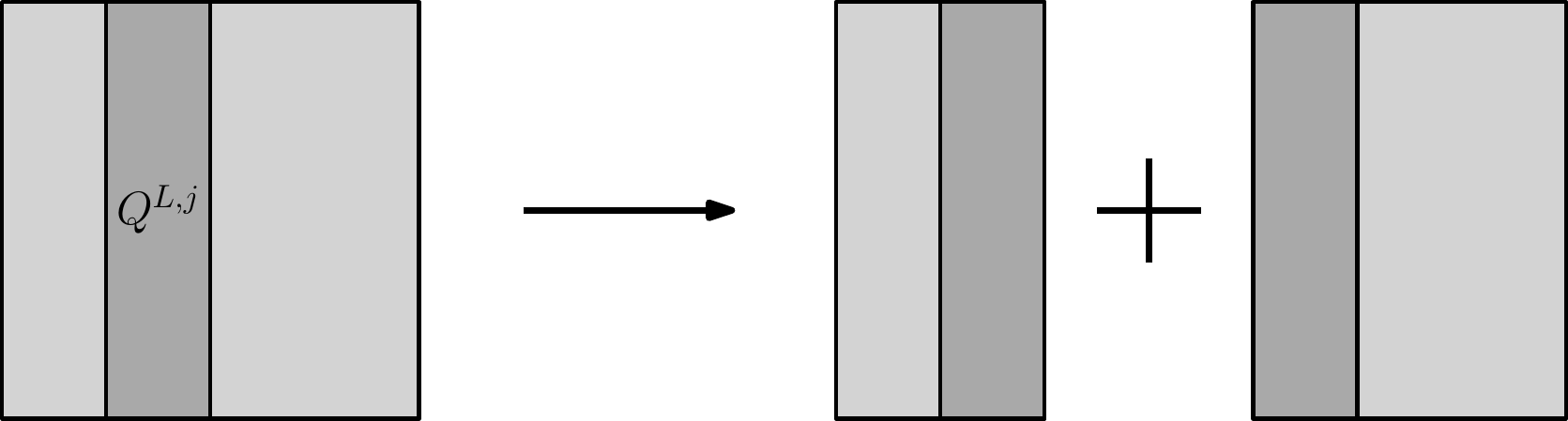}
\caption{Cutting the box $Q^{\text{L}}$ with respect to $Q^{\text{L},j}$.}
\label{fig:CuttingBoxes}
\end{figure}

The first inequality results from the first observation above (the ``moving away'' estimate), and the second inequality results from the second observation above (the ``reflecting'' estimate), both by replacing $Q^{\text{L}}$ in the above argument with suitable boxes $Q^{\text{L},\leq j}$ or $Q^{\text{L},>j}$. This finishes the proof of the lemma. 
\end{proof}

Finally, we record a consequence of this section that is of general interest. The following corollary quantifies the almost ``factorization'' property of the cross-terms involving two adjacent boxes. This estimate is used in the proof of Theorem~\ref{th:SeibergSmallNecessary}.

\begin{coro}\label{coro:AlmostFactorization}
Let $p>0$ and $0\leq q\leq 1$. Consider the two rectangles $L^{\text{L}}\subset Q^{\text{L}}$ and $L^{\text{R}}\subset Q^{\text{R}}$ in Figure~\ref{fig:HorizontalRectangles}, contained in the adjacent boxes $Q^{\text{L}}$ and $Q^{\text{R}}$. Then the following cross moment estimate holds for any $\delta=\frac{1}{N}$ with large enough integer $N$:
\begin{equation*}
\begin{split}
    &\mathbb{E}[\mu_{\text{H}}(L^{\text{L}})^{p}\mu_{\text{H}}(L^{\text{R}})^{q}]\\
    \leq{}&C_{p+q}\delta^{q}\mathbb{E}[\mu_{\text{H}}(L^{L})^{p+q}]+\delta^{-2\gamma^2pq}\mathbb{E}[\mu_{\text{H}}(L^{\text{L}})^{p}]\mathbb{E}[\mu_{\text{H}}(L^{\text{R}})^{q}].
\end{split}
\end{equation*}
\end{coro}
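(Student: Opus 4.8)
The plan is to split the target expectation according to whether the two adjacent rectangles $L^{\text{L}}$ and $L^{\text{R}}$ are treated as ``close'' or ``far'', exactly mirroring the two-term decomposition of the cross moment in Step~3 of the proof of Theorem~\ref{th:SeibergLargeCoupling}. The difference from that proof is only that the roles of the exponents $k,p-k$ are now played by the free parameters $p,q$, with the constraint $0\leq q\leq 1$ replacing $p-k\leq 1$. Concretely, I would first subdivide $L^{\text{R}}$ into its $\delta$-slice $L^{\text{R},\delta}$ adjacent to the common boundary $\{x=0\}$ and its complement $L^{\text{R},r-\delta}$, so that
\begin{equation*}
    \mathbb{E}[\mu_{\text{H}}(L^{\text{L}})^{p}\mu_{\text{H}}(L^{\text{R}})^{q}]\leq \mathbb{E}[\mu_{\text{H}}(L^{\text{L}})^{p}\mu_{\text{H}}(L^{\text{R},r-\delta})^{q}]+\mathbb{E}[\mu_{\text{H}}(L^{\text{L}})^{p}\mu_{\text{H}}(L^{\text{R},\delta})^{q}],
\end{equation*}
using subadditivity of $x\mapsto x^{q}$ since $q\leq 1$. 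This reduces the corollary to bounding the two resulting terms.

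For the far term I would apply the decorrelation estimate of Section~\ref{sec:TechnicalEstimates} (the Proposition proving~\eqref{eq:Upshot1}), with the substitution $k\mapsto p$ and $p-k\mapsto q$, which is legitimate since that proposition is stated for general $k>0$ and $p\in[k,k+1]$ and its proof only uses Lemma~\ref{lem:Slepian} with the two exponents. This yields
\begin{equation*}
    \mathbb{E}[\mu_{\text{H}}(L^{\text{L}})^{p}\mu_{\text{H}}(L^{\text{R},r-\delta})^{q}]\leq \delta^{-2\gamma^2pq}\,\mathbb{E}[\mu_{\text{H}}(L^{\text{L}})^{p}]\,\mathbb{E}[\mu_{\text{H}}(L^{\text{R},r-\delta})^{q}],
\end{equation*}
and since $\mu_{\text{H}}(L^{\text{R},r-\delta})\leq\mu_{\text{H}}(L^{\text{R}})$ monotonically this bounds the far term by the second summand in the statement (after replacing $L^{\text{R},r-\delta}$ by $L^{\text{R}}$, which only increases the factorized expectation). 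For the near term I would invoke the Sokoban lemma of Section~\ref{sec:SokobanEstimates}, again read with $k\mapsto p$, $p-k\mapsto q$, which gives
\begin{equation*}
    \mathbb{E}[\mu_{\text{H}}(L^{\text{L}})^{p}\mu_{\text{H}}(L^{\text{R},\delta})^{q}]\leq C_{p+q}\,\delta^{q}\,\mathbb{E}[\mu_{\text{H}}(L^{\text{L}})^{p+q}],
\end{equation*}
producing the first summand. Adding the two bounds gives the claimed inequality.

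The only genuine point requiring care, and the step I expect to be the main obstacle, is verifying that the Sokoban lemma applies verbatim to the horizontal rectangles $L^{\text{L}},L^{\text{R}}$ of Figure~\ref{fig:HorizontalRectangles} rather than to the full boundary boxes $Q^{\text{L}},Q^{\text{R}}$ for which it was stated; in particular one must check that the box-moving comparisons~\eqref{eq:MovingAway} and~\eqref{eq:MovingReflect} still hold after restricting to these thin horizontal slices, and that the Jensen step partitioning $L^{\text{L}}$ into its $N$ sub-slices goes through with the exponent pair $(p,q)$ in place of $(k,p-k)$. Since those comparisons rest only on Slepian-type covariance domination between translated/reflected copies of the field—a property insensitive to the vertical extent of the boxes and to whether the large exponent is an integer—I expect this verification to be routine but worth stating explicitly. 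Once this compatibility is recorded, the corollary follows immediately by combining the far and near estimates as above.
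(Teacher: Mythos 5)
Your proposal is correct and follows essentially the same route as the paper: cut $L^{\text{R}}$ into the adjacent $\delta$-slice and its complement, use subadditivity of $x\mapsto x^{q}$, then apply the Sokoban lemma to the near piece and the Slepian-type factorization of Section~\ref{sec:TechnicalEstimates} to the far piece, finishing with monotonicity of $\mathbb{E}[\mu_{\text{H}}(\cdot)^{q}]$. Your explicit remark that the box-moving comparisons must be checked for the thin horizontal slices $L^{\text{L}},L^{\text{R}}$ is a point the paper passes over with ``by the same argument,'' and it is indeed routine for the reason you give.
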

\begin{proof}
Cut the rectangle $L^{\text{R}}$ into two parts: $L^{\text{R},\delta}\coloneqq L^{\text{R}}\cap Q^{\text{R},\delta}$ which is adjacent to $Q^{\text{L}}$, and $L^{\text{R},r-\delta}\coloneqq L^{\text{R}}\cap Q^{\text{R},r-\delta}$ which is $\delta$-away from $Q^{\text{L}}$. Since $0\leq q\leq 1$, via subadditivity it suffices to see that, by the same argument as the Sokoban lemma in Section~\ref{sec:SokobanEstimates},
\begin{equation*}
    \mathbb{E}[\mu_{\text{H}}(L^{\text{L}})^{p}\mu_{\text{H}}(L^{\text{R},\delta})^{q}]\leq C_{p+q}\delta^{q}\mathbb{E}[\mu_{\text{H}}(L^{L})^{p+q}],
\end{equation*}
and also by the same arguemnt in Section~\ref{sec:TechnicalEstimates},
\begin{equation*}
    \mathbb{E}[\mu_{\text{H}}(L^{\text{L}})^{p}\mu_{\text{H}}(L^{\text{R},r-\delta})^{q}]\leq \delta^{-2\gamma^2pq}\mathbb{E}[\mu_{\text{H}}(L^{\text{L}})^{p}]\mathbb{E}[\mu_{\text{H}}(L^{\text{R},r-\delta})^{q}].
\end{equation*}
It remains to use the basic fact that $\mathbb{E}[\mu_{\text{H}}(L^{\text{R},r-\delta})^{q}]\leq \mathbb{E}[\mu_{\text{H}}(L^{\text{R}})^{q}]$.
\end{proof}

\appendix
\section{Some elementary inequalities}\label{sec:Elementary}
We gather some elementary inequalities used in the paper. They serve mainly as converses to the superadditive/subadditive-type inequalities.

\begin{prop}[Converse of the superadditivity inequality]\label{lem:Muirhead}
Let $x,y\geq 0$ be real numbers. Let $k\geq 1$ be a positive integer and $0\leq q\leq 1$. Then
\begin{equation*}
    (x+y)^{k+q}\leq x^{k+q}+y^{k+q}+C_{k}(x^{k}y^{q}+x^{q}y^{k}),
\end{equation*}
where $C_{k}$ is some constant depending only on $k$.
\end{prop}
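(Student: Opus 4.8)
The plan is to reduce the statement to a bound on individual monomials obtained from a binomial-type expansion. Since both sides are homogeneous of degree $k+q$, and the cases $x=0$ or $y=0$ are immediate (the left-hand side then coincides with one of the free terms $x^{k+q}$ or $y^{k+q}$), I may assume $x,y>0$.

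First I would split the exponent as $(x+y)^{k+q}=(x+y)^k(x+y)^q$. Because $q\in[0,1]$, the map $t\mapsto t^q$ is subadditive on $[0,\infty)$, so $(x+y)^q\leq x^q+y^q$; combined with the ordinary binomial expansion of $(x+y)^k$ (legitimate since $k$ is a positive integer), this gives
\[
(x+y)^{k+q}\leq\Bigl(\sum_{i=0}^k\binom{k}{i}x^i y^{k-i}\Bigr)(x^q+y^q)=\sum_{i=0}^k\binom{k}{i}x^{i+q}y^{k-i}+\sum_{i=0}^k\binom{k}{i}x^i y^{k-i+q}.
\]
Next I would isolate the two extreme monomials: the $i=k$ term of the first sum is exactly $x^{k+q}$ and the $i=0$ term of the second sum is exactly $y^{k+q}$, each with coefficient $1$, accounting for the two free terms. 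Every other monomial $x^a y^b$ occurring satisfies $a+b=k+q$ with $x$-exponent $a$ obeying $q\leq a\leq k$: in the first sum $a=i+q$ with $0\leq i\leq k-1$, so $q\leq a\leq(k-1)+q\leq k$ using $q\leq1$; in the second sum $a=i$ with $1\leq i\leq k$, so $q\leq1\leq i=a\leq k$.

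The crux is the monomial domination: whenever $q\leq a\leq k$ and $a+b=k+q$, I claim $x^a y^b\leq x^k y^q+x^q y^k$. Setting $t=x/y$ and dividing by $y^{k+q}$, this reduces to the elementary one-variable inequality $t^a\leq t^k+t^q$ for all $t>0$, which holds because $t^a\leq t^k$ for $t\geq1$ (as $a\leq k$) and $t^a\leq t^q$ for $t\leq1$ (as $a\geq q$). Summing this estimate over all non-extreme terms, the total coefficient is
\[
\sum_{i=0}^{k-1}\binom{k}{i}+\sum_{i=1}^{k}\binom{k}{i}=2(2^k-1),
\]
so the desired inequality holds with $C_k=2^{k+1}-2$. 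I do not expect a genuine obstacle here; the only point demanding care is bookkeeping which monomials are free versus cross and verifying the exponent bounds $q\leq a\leq k$, and this is precisely where the hypotheses that $k$ is a positive integer and $q\in[0,1]$ are used.
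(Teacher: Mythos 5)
Your proof is correct. It shares the paper's overall strategy---use subadditivity to handle the fractional part of the exponent, expand binomially, and dominate every cross monomial by the extremal pair $x^{k}y^{q}+x^{q}y^{k}$---but the implementation differs in two respects. The paper writes $(x+y)^{k+q}=\bigl((x+y)^{\frac{k+q}{k+1}}\bigr)^{k+1}$, applies subadditivity to the fractional power $\frac{k+q}{k+1}\leq 1$, expands the $(k+1)$-st power, and then invokes its Proposition~\ref{prop:Muirhead} (Muirhead's inequality) to bound the $2^{k+1}-2$ cross terms in symmetric pairs, arriving at $C_{k}=2^{k}-1$. You instead factor $(x+y)^{k+q}=(x+y)^{k}(x+y)^{q}$, apply subadditivity only to the $q$-factor, and bound each individual monomial $x^{a}y^{b}$ with $q\leq a\leq k$ and $a+b=k+q$ by the self-contained one-variable inequality $t^{a}\leq t^{k}+t^{q}$; this is exactly the content of the Muirhead step, verified from scratch rather than quoted, and it yields the slightly larger but equally serviceable constant $C_{k}=2^{k+1}-2$. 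Since the proposition only requires some constant depending on $k$, both routes are interchangeable; yours has the minor advantage of not relying on the auxiliary Muirhead proposition, while the paper's pairing of symmetric cross terms halves the constant. Your bookkeeping of which exponents are ``free'' versus ``cross'' and the verification $q\leq a\leq k$ are exactly where the hypotheses $k\in\mathbb{Z}_{\geq 1}$ and $q\in[0,1]$ enter, just as in the paper.
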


\begin{prop}[Converse of the subadditivity inequality]\label{lem:ConverseSubadditivity}
Let $x,y\geq 0$ be real numbers. Let $p,q>0$ such that $\frac{1}{2}\leq p+q\leq 1$. Then
\begin{equation*}
    (x+y)^{p+q}\geq x^{p+q}+y^{p+q}-2(x^{p}y^{q}+x^{q}y^{p}).
\end{equation*}
\end{prop}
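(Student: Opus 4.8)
The plan is to reduce the two-variable inequality to a single-variable one via homogeneity and symmetry, and then dispatch it with two elementary observations. Writing $s \coloneqq p+q$, note first that both sides are homogeneous of degree $s$ in $(x,y)$ (indeed $x^p y^q$ has degree $p+q=s$) and symmetric under exchanging $x$ and $y$; moreover the case $xy=0$ gives equality. Thus I may assume $0 < x \le y$, rescale so that $y=1$, and set $t \coloneqq x \in (0,1]$. After this normalization the claim becomes
\[
  t^s + 1 - (1+t)^s \le 2\bigl(t^p + t^q\bigr).
\]

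Next I would bound the left-hand side from above in two steps. Since $1+t \ge 1$ and $s>0$, we have $(1+t)^s \ge 1$, so the term $1-(1+t)^s$ is nonpositive and may simply be discarded:
\[
  t^s + 1 - (1+t)^s \le t^s.
\]
Then, because $t \in (0,1]$ and $s = p+q \ge p$ (as $q>0$), the exponent $s$ is the largest one in play, so $t^s$ is the smallest power and
\[
  t^s \le t^p \le t^p + t^q \le 2\bigl(t^p + t^q\bigr).
\]
Chaining these two displays yields the normalized inequality, and undoing the reduction gives the proposition.

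The argument is entirely elementary and I do not expect a genuine obstacle; the main points requiring a little care are organizational rather than technical. One must invoke the symmetry of both sides in order to legitimately order $x \le y$, and one must get the direction of the concavity step right — it is precisely the subadditivity $(x+y)^s \le x^s + y^s$ for $s \le 1$ that makes the discarded term carry the favorable sign. I would also remark, as a sanity check, that the proof in fact establishes the slightly stronger bound with constant $1$ in place of $2$, since $t^s \le t^p + t^q$ already, and that it uses neither the upper bound $p+q \le 1$ nor the lower bound $\tfrac{1}{2} \le p+q$; these hypotheses merely delimit the regime of application elsewhere in the paper.
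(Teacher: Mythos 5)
Your proof is correct, and it takes a genuinely different --- and more elementary --- route than the paper's. The paper first invokes Muirhead's inequality (Proposition~\ref{prop:Muirhead}) to reduce to the symmetric case $p=q$, and then applies a squaring trick, $x^{2p}+y^{2p}=(x^{4p}+y^{4p}+2x^{2p}y^{2p})^{1/2}\le (x^{4p}+y^{4p})^{1/2}+\sqrt{2}\,x^{p}y^{p}$, followed by the superadditivity $x^{4p}+y^{4p}\le (x+y)^{4p}$; this last step is exactly where the hypothesis $p+q\ge\tfrac12$ (i.e.\ $4p\ge 1$) enters. Your normalization argument bypasses all of this: after reducing by homogeneity and symmetry to $y=1$ and $t=x\in(0,1]$, you only need $(1+t)^{s}\ge 1$ and $t^{s}\le t^{p}$, so you obtain the stronger conclusion with constant $1$ in place of $2$ and with no restriction on $p+q$ beyond $p,q>0$, whereas the paper's route yields the constant $\sqrt{2}$ in the reduced symmetric case and genuinely uses $p+q\ge\tfrac12$. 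One small correction to your commentary: the discarded term $1-(1+t)^{s}$ is nonpositive simply because $(1+t)^{s}\ge 1^{s}=1$ by monotonicity of $u\mapsto u^{s}$ on $[1,\infty)$; the subadditivity of $u\mapsto u^{s}$ for $s\le 1$ plays no role in your argument, contrary to what your closing remark suggests.
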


The proofs of these inequalities are elementary. They can be seen as variants of the following classical lemma:
\begin{prop}[Muirhead's inequality]\label{prop:Muirhead}
Let $0\leq p_1\leq q_1$, $0\leq p_2\leq q_2$ such that $p_1+q_1=p_2+q_2$. Then for any $x,y\geq 0$,
\begin{equation*}
    x^{p_1}y^{q_1}+x^{q_1}y^{p_1}\geq x^{p_2}y^{q_2}+x^{q_2}y^{p_2}
\end{equation*}
if and only if $|p_1-q_1|\geq |p_2-q_2|$.
\end{prop}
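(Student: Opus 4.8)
The plan is to reduce this two-variable Muirhead inequality to a one-line comparison of hyperbolic cosines. Write $s\coloneqq p_1+q_1=p_2+q_2$ for the common sum and introduce the gaps $a\coloneqq q_1-p_1\geq 0$ and $b\coloneqq q_2-p_2\geq 0$; since $p_i\leq q_i$ we have $|p_1-q_1|=a$ and $|p_2-q_2|=b$, so the assertion is precisely that the inequality $x^{p_1}y^{q_1}+x^{q_1}y^{p_1}\geq x^{p_2}y^{q_2}+x^{q_2}y^{p_2}$ holds for all $x,y\geq 0$ if and only if $a\geq b$. Because both sides are continuous functions of $(x,y)\in[0,\infty)^2$, it suffices to treat the interior $x,y>0$: the ``if'' direction then extends to the boundary by continuity, while for the ``only if'' direction any counterexample may be taken with $x,y>0$.

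The key step is a symmetrization that removes the common sum. For $x,y>0$, using $p_1=\tfrac{s-a}{2}$ and $q_1=\tfrac{s+a}{2}$, I would factor out $(xy)^{s/2}$ to obtain
\[
    x^{p_1}y^{q_1}+x^{q_1}y^{p_1}=(xy)^{s/2}\left((x/y)^{a/2}+(x/y)^{-a/2}\right),
\]
and the analogous identity for the pair $(p_2,q_2)$ with $b$ replacing $a$. Setting $u\coloneqq\ln(x/y)\in\mathbb{R}$, each bracket becomes $2\cosh(au/2)$ (resp. $2\cosh(bu/2)$), so after dividing by the positive factor $(xy)^{s/2}$ the target inequality is equivalent to $\cosh(au/2)\geq\cosh(bu/2)$ for all $u\in\mathbb{R}$.

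The conclusion is then immediate from the elementary fact that $\cosh$ is even and strictly increasing on $[0,\infty)$: for each fixed $u\neq 0$ one has $\cosh(au/2)\geq\cosh(bu/2)$ if and only if $a|u|\geq b|u|$, i.e.\ $a\geq b$. This disposes of both directions simultaneously, since $a\geq b$ makes the cosh inequality hold for every $u$, whereas a single choice $u\neq 0$ (equivalently $x\neq y$) already forces $a\geq b$. I do not anticipate a genuine obstacle in this argument; the only points requiring minor care are the strictness of the monotonicity needed for the converse and the routine boundary bookkeeping handled by the continuity reduction above. Finally, I would note that the companion inequalities of Proposition~\ref{lem:Muirhead} and Proposition~\ref{lem:ConverseSubadditivity} fall out of the same circle of ideas, by comparing the intermediate exponent pairs produced in a binomial-type expansion against the extremal pair, which is exactly the regime governed by the $\cosh$ monotonicity established here.
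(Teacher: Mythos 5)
Your proof is correct, and it takes a genuinely different route from the paper's. The paper argues purely algebraically: by symmetry and after dividing through by $(xy)^{p_1}$ it reduces to the case $p_1=0$, where the inequality $x^{q_1}+y^{q_1}\geq x^{p_2}y^{q_2}+x^{q_2}y^{p_2}$ (with $q_1=p_2+q_2$) is exactly the expansion of $(x^{p_2}-y^{p_2})(x^{q_2}-y^{q_2})\geq 0$, the two factors having the same sign. Your symmetrization around the common sum $s$, which turns each side into $(xy)^{s/2}\cdot 2\cosh(\cdot)$ in the variable $u=\ln(x/y)$, trades that algebraic identity for the evenness and strict monotonicity of $\cosh$ on $[0,\infty)$. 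What your version buys is a cleaner treatment of the equivalence: both implications, including the strictness needed for the ``only if'' direction, fall out of a single monotonicity statement, whereas the paper's write-up is terse on the converse (its ``by symmetry'' only yields the reverse weak inequality and one still needs a strict instance to rule out $|p_1-q_1|<|p_2-q_2|$). What the paper's version buys is that it is entirely algebraic and valid verbatim at $x=0$ or $y=0$, so no continuity reduction to the interior is needed; your boundary bookkeeping is routine but does rely on the convention $0^0=1$ to make $x\mapsto x^{p}$ continuous at $0$ when $p=0$. Both proofs are complete and elementary.
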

\begin{proof}
By symmetry we can suppose $|p_1-q_1|\geq |p_2-q_2|$. By symmetry, we can further suppose $0\leq p_1\leq p_2\leq q_2\leq q_1$. Dividing by $x^{p_1}y^{p_1}\geq 0$, we can suppose $p_1=0$. The inequality is reduced to
\begin{equation*}
    y^{q_1}+x^{q_1}\geq x^{p_2}y^{q_2}+x^{q_2}y^{p_2}.
\end{equation*}
for $0\leq p_2\leq q_2\leq q_1=p_2+q_2$. This follows then from developing the product
\begin{equation*}
    (x^{p_2}-y^{p_2})(x^{q_2}-y^{q_2})\geq 0.
\end{equation*}
Muirhead's inequality then follows.
\end{proof}

Now we can show the other inequalities above.
\begin{proof}[Proof of Proposition~\ref{lem:Muirhead}]
Write $(x+y)^{k+q}=((x+y)^{\frac{k+q}{k+1}})^{k+1}\leq (x^{\frac{k+q}{k+1}}+y^{\frac{k+q}{k+1}})^{k+1}$ by subadditivity, since $k+q\leq k+1$. Developing the integer power by binomial expansion, we have
\begin{equation*}
    (x^{\frac{k+q}{k+1}}+y^{\frac{k+q}{k+1}})^{k+1}=x^{k+q}+y^{k+q}+\dots
\end{equation*}
where in the $\dots$ are $2^{k+1}-2$ cross terms of the form $x^{p}y^{k+q-p}$, with $\frac{k+q}{k+1}\leq p\leq\frac{k(k+q)}{k+1}$. Since $q\leq\frac{k+q}{k+1}$, the maximum of the difference of powers $|(k+q-p)-p|$ in the cross terms is less than $|k-q|$. Therefore by Muirhead's inequality of Proposition~\ref{prop:Muirhead}, the sum of the cross terms is less than
\begin{equation*}
    (2^{k}-1)(x^{k}y^{q}+x^{q}y^{k}).
\end{equation*}
Therefore Proposition~\ref{lem:Muirhead} follows with $C_{k}=2^{k}-1$.
\end{proof}

\begin{proof}[Proof of Proposition~\ref{lem:ConverseSubadditivity}]
By Proposition~\ref{prop:Muirhead}, $x^{p}y^{q}+x^{q}y^{p}\geq 2x^{\frac{p+q}{2}}y^{\frac{p+q}{2}}$. Therefore, it suffices to show the case with $p=q$. But
\begin{equation*}
\begin{split}
    x^{2p}+y^{2p}&=((x^{2p}+y^{2p})^2)^{\frac{1}{2}}\\
    &=(x^{4p}+y^{4p}+2x^{2p}y^{2p})^{\frac{1}{2}}\\
    &\leq (x^{4p}+y^{4p})^{\frac{1}{2}}+\sqrt{2}x^{p}y^{p}
\end{split}
\end{equation*}
by subadditivity. Furthermore, since $4p\geq 1$, superadditivity yields
\begin{equation*}
    x^{4p}+y^{4p}\leq (x+y)^{4p},
\end{equation*}
so that we get
\begin{equation*}
    x^{2p}+y^{2p}\leq ((x+y)^{4p})^{\frac{1}{2}}+\sqrt{2}x^{p}y^{p}=(x+y)^{2p}+\sqrt{2}x^{p}y^{p}.
\end{equation*}
This finishes the proof (with a better constant $\sqrt{2}$ then $2$, but this is of no importance for our purpose).
\end{proof}

\section{Gaussian decorrelation inequalities}\label{app:Decorrelation}
We gather some Gaussian decorrelation inequalities used in the paper. For simplicity, we record the discrete sum version of these inequalities below, and for most applications in this paper, it suffices to use a Riemann sum approximation to get the integral versions.

Given two \emph{centered} Gaussian vectors $\bm{X}=\{X_i\}_{i\in T}$ and $\bm{Y}=\{Y_i\}_{i\in T}$ where $T$ is some discrete index space, we say that $Y$ dominates $X$ in covariance on $A\subset T^2$ if for any $(i,j)\in A$, $\mathbb{E}[X_iX_j]\leq\mathbb{E}[Y_iY_j]$. We say that a function $f$ on $\mathbb{R}^{n}$ has subgaussian growth if for any $\epsilon\geq 0$, there is some constant $C$ such that $|f(x)|\leq Ce^{\epsilon|x|^2}$ for all $x\in\mathbb{R}^{n}$.

\begin{lemm}[Slepian's lemma]\label{lem:Slepian}
Let $\bm{X}$, $\bm{Y}$ be two centered Gaussian vectors indexed by $T$. Let $A,B$ be disjoint subsets of $T^2$ and suppose that $\bm{Y}$ dominates $\bm{X}$ in covariance on $T^2\setminus B$ and $\bm{X}$ dominates $\bm{Y}$ in covariance on $T^2\setminus A$. Suppose that $F$ is some smooth functional with subgaussian growth at infinity as well as in its first and second derivative. If furthermore $\partial F_{ij}\geq 0$ for $(i,j)\in A$ and $\partial F_{ij}\leq 0$ for $(i,j)\in B$, then
\begin{equation*}
    \mathbb{E}[F(\bm{X})]\leq\mathbb{E}[F(\bm{Y})].
\end{equation*}
\end{lemm}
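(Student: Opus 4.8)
The plan is to prove Slepian's lemma by the classical Gaussian interpolation (Kahane) argument. Since the conclusion only involves the laws of $F(\bm{X})$ and $F(\bm{Y})$ separately, I may assume without loss of generality that $\bm{X}$ and $\bm{Y}$ are independent; if they are not, I replace them by independent copies on a product space, which changes neither $\mathbb{E}[F(\bm{X})]$ nor $\mathbb{E}[F(\bm{Y})]$. First I would introduce the interpolating family
\begin{equation*}
    \bm{Z}_t \coloneqq \sqrt{t}\,\bm{X} + \sqrt{1-t}\,\bm{Y}, \qquad t\in[0,1],
\end{equation*}
so that $\bm{Z}_1 = \bm{X}$ and $\bm{Z}_0 = \bm{Y}$, and set $\phi(t)\coloneqq \mathbb{E}[F(\bm{Z}_t)]$. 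The goal then becomes to show $\phi(1)\le\phi(0)$.

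The heart of the argument is an explicit formula for $\phi'(t)$. Differentiating under the expectation (legitimate by the subgaussian growth assumption on $F$ and its derivatives, via dominated convergence) gives
\begin{equation*}
    \phi'(t) = \sum_{i} \mathbb{E}\!\left[\partial_i F(\bm{Z}_t)\left(\tfrac{1}{2\sqrt{t}}X_i - \tfrac{1}{2\sqrt{1-t}}Y_i\right)\right].
\end{equation*}
I then apply Gaussian integration by parts (Stein's identity) to each term, using the independence of $\bm{X}$ and $\bm{Y}$, which yields $\mathbb{E}[X_i Z_{t,j}] = \sqrt{t}\,\mathbb{E}[X_iX_j]$ and $\mathbb{E}[Y_i Z_{t,j}] = \sqrt{1-t}\,\mathbb{E}[Y_iY_j]$. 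The apparent singular prefactors $1/\sqrt{t}$ and $1/\sqrt{1-t}$ are exactly cancelled by these square roots, producing the clean identity
\begin{equation*}
    \phi'(t) = \frac{1}{2}\sum_{i,j}\bigl(\mathbb{E}[X_iX_j] - \mathbb{E}[Y_iY_j]\bigr)\,\mathbb{E}\!\left[\partial_{ij}F(\bm{Z}_t)\right],
\end{equation*}
where $\partial_{ij}F$ denotes the Hessian entry $\partial F_{ij}$ of the statement.

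The final step is a sign analysis of this sum, partitioning $T^2 = A \cup B \cup C$ into disjoint pieces with $C \coloneqq T^2\setminus(A\cup B)$. Since $A$ and $B$ are disjoint, $A\subset T^2\setminus B$ and $B\subset T^2\setminus A$; hence the covariance difference $\mathbb{E}[X_iX_j]-\mathbb{E}[Y_iY_j]$ is $\le 0$ on $A$, is $\ge 0$ on $B$, and vanishes on $C$ (where both domination hypotheses hold at once). Combined with the assumptions $\partial F_{ij}\ge 0$ on $A$ and $\partial F_{ij}\le 0$ on $B$ --- which pass to the expectations $\mathbb{E}[\partial_{ij}F(\bm{Z}_t)]$ --- every summand is nonpositive, so $\phi'(t)\le 0$ for each $t\in(0,1)$. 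Integrating gives $\phi(1)-\phi(0) = \int_0^1\phi'(t)\,dt \le 0$, which is the claimed inequality.

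The main obstacle is not the sign bookkeeping but the analytic justification: establishing rigorously that $\phi$ is differentiable with the stated derivative and that Gaussian integration by parts is applicable. This is precisely where the subgaussian growth of $F$ and of its first and second derivatives enters, guaranteeing that all the integrals converge and that one may differentiate under the expectation; one must also verify that the cancellation of the $t\to 0^+$ and $t\to 1^-$ singularities after integration by parts indeed leaves $\phi'$ bounded and continuous up to the endpoints, so that the integration over $[0,1]$ is valid.
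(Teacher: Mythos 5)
Your proof is correct: the Gaussian interpolation $\bm{Z}_t=\sqrt{t}\,\bm{X}+\sqrt{1-t}\,\bm{Y}$ (with independent copies), the integration-by-parts identity for $\phi'(t)$, and the sign analysis on the partition $T^2=A\cup B\cup C$ all check out, including the observation that the covariances must coincide on $C$ where both domination hypotheses hold. The paper does not prove this lemma itself but defers to~\cite[Section~3]{zeitouniGaussian}, where the argument is exactly this standard interpolation scheme, so your proposal matches the intended proof.
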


\begin{lemm}[Kahane's convexity inequality]\label{lem:KahaneConvexity}
Let $\bm{X}$, $\bm{Y}$ be two Gaussian vectors indexed by $T$ such that $\bm{Y}$ dominates $\bm{X}$ in covariance on $T^2$. Then for any \emph{convex} functional $F$ with subgaussian growth at infinity as well as in its first and second derivatives, and for any \emph{positive} weights $\{p_i\}_{i\in T}$,
\begin{equation*}
    \mathbb{E}[F(\sum_{i\in T}p_ie^{X_i-\frac{1}{2}\mathbb{E}[X_i^2]})]\leq \mathbb{E}[F(\sum_{i\in T}p_ie^{Y_i-\frac{1}{2}\mathbb{E}[Y_i^2]})].
\end{equation*}
\end{lemm}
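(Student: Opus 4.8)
The plan is to prove the inequality by Gaussian interpolation between $\bm X$ and $\bm Y$, combined with Gaussian integration by parts and the convexity of $F$. First I would realize $\bm X$ and $\bm Y$ on a common probability space as \emph{independent} centered Gaussian vectors (legitimate since each side of the claimed inequality depends only on the marginal law of $\bm X$ or of $\bm Y$), and interpolate by setting, for $t\in[0,1]$,
\[
    Z_i^t\coloneqq\sqrt{1-t}\,X_i+\sqrt{t}\,Y_i,\qquad M_i(t)\coloneqq e^{Z_i^t-\frac12\mathbb{E}[(Z_i^t)^2]},
\]
where by independence $\mathbb{E}[(Z_i^t)^2]=(1-t)\mathbb{E}[X_i^2]+t\,\mathbb{E}[Y_i^2]$. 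Writing $W(t)\coloneqq\sum_{i\in T}p_iM_i(t)$ and $\phi(t)\coloneqq\mathbb{E}[F(W(t))]$, the two sides of the desired inequality are exactly $\phi(0)$ and $\phi(1)$, so it suffices to show that $\phi$ is nondecreasing.

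Next I would differentiate in $t$. Formally,
\[
    \phi'(t)=\sum_{i\in T}p_i\,\mathbb{E}\!\left[F'(W)\,M_i\Big(\tfrac{Y_i}{2\sqrt t}-\tfrac{X_i}{2\sqrt{1-t}}-\tfrac12(\mathbb{E}[Y_i^2]-\mathbb{E}[X_i^2])\Big)\right],
\]
and then I would apply Gaussian integration by parts to the linear factors $Y_i$ and $X_i$. Since $\bm X$ and $\bm Y$ are independent, differentiating a factor $Y_i$ only produces the derivatives $\partial_{Y_k}W=\sqrt t\,p_kM_k$ and $\partial_{Y_k}M_i=\sqrt t\,\delta_{ik}M_i$, and symmetrically for $X_i$ with $\sqrt{1-t}$. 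The terms carrying a single factor $F'(W)$ combine with the deterministic drift $-\frac12(\mathbb{E}[Y_i^2]-\mathbb{E}[X_i^2])$ and cancel identically, and the apparent $1/\sqrt t$ and $1/\sqrt{1-t}$ singularities disappear in the process, leaving the clean expression
\[
    \phi'(t)=\frac12\sum_{i,k\in T}p_ip_k\big(\mathbb{E}[Y_iY_k]-\mathbb{E}[X_iX_k]\big)\,\mathbb{E}\!\left[F''(W)\,M_iM_k\right].
\]

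I would then conclude by a sign inspection: convexity of $F$ gives $F''\geq 0$, the weights $p_i,p_k$ and the exponentials $M_i,M_k$ are strictly positive, and the hypothesis that $\bm Y$ dominates $\bm X$ in covariance on all of $T^2$ gives $\mathbb{E}[Y_iY_k]-\mathbb{E}[X_iX_k]\geq 0$ for \emph{every} pair $(i,k)$. Hence each summand is nonnegative, so $\phi'(t)\geq 0$ and integrating over $[0,1]$ yields $\phi(0)\leq\phi(1)$, which is the assertion.

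The main obstacle is analytic rather than algebraic: one must justify differentiating under the expectation and the Gaussian integration-by-parts (Stein) identity, and control the a priori singular endpoint behavior at $t=0,1$. This is exactly where the subgaussian growth hypotheses on $F,F',F''$ enter, since they furnish the integrable domination needed both for dominated convergence and for the validity of Stein's lemma; and after the cancellation of the $F'$-terms the formula for $\phi'$ extends continuously to all of $[0,1]$, so that $\phi$ is $C^1$ on $(0,1)$ and continuous on $[0,1]$ and the fundamental theorem of calculus applies. A routine approximation (mollifying $F$ and truncating $T$ to a finite index set before passing to the limit) disposes of any remaining regularity and summability technicalities, and leaves the convexity-plus-covariance-domination argument as the genuine content.
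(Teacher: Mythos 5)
Your interpolation argument is correct and is precisely the standard proof of Kahane's convexity inequality: the paper does not prove this lemma itself but defers to \cite[Section~3]{zeitouniGaussian}, where the same scheme (independent coupling, $Z^t=\sqrt{1-t}\,X+\sqrt{t}\,Y$, Gaussian integration by parts, cancellation of the $F'$-terms, and sign inspection of the resulting $F''$-sum) is carried out. No discrepancy to report.
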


The proofs of these inequalities can be found in e.g.~\cite[Section~3]{zeitouniGaussian}. The following classical corollary is important for studying moments of Gaussian multiplicative chaos.

\begin{coro}\label{coro:ClassicalApplicationKahane}\label{coro:KahaneApplication}
Let $X$, $Y$ be two log-correlated Gaussian fields defined on $\Omega\subset\mathbb{R}^{d}$ such that for any $(z,w)\in \Omega^2$,
\begin{equation*}
    \mathbb{E}[X(z)X(w)]\leq \mathbb{E}[Y(z)Y(w)]+R
\end{equation*}
for some $R>0$. Then for any $\gamma\in(0,\sqrt{2d})$ and $p\in\mathbb{R}$, there is some finite constant $C=C(R,\gamma,p)$ depending only on $R,\gamma$ and $p$ such that for all measurable sets $A\subset\Omega$,
\begin{equation*}
    \mathbb{E}[\mu^{\gamma}_{X}(A)^{p}]\leq C(R,\gamma,p)\mathbb{E}[\mu^{\gamma}_{Y}(A)^{p}],
\end{equation*}
where $\mu_{X}$ and $\mu_{Y}$ are the Gaussian multiplicative chaos measures associated to $X$ and $Y$ with parameter $\gamma$, respectively.
\end{coro}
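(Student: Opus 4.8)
The plan is to absorb the additive constant $R$ into an independent Gaussian and then invoke Kahane's convexity inequality (Lemma~\ref{lem:KahaneConvexity}). The one computation I would record first is the effect of adding an independent standard Gaussian $N$ to a field $Z$: setting $\tilde Z(z)\coloneqq Z(z)+\sqrt{R}\,N$ one has $\mathbb{E}[\tilde Z(z)\tilde Z(w)]=\mathbb{E}[Z(z)Z(w)]+R$, while the associated chaos factorizes as $\mu^{\gamma}_{\tilde Z}(A)=W\,\mu^{\gamma}_{Z}(A)$ with $W\coloneqq e^{\gamma\sqrt{R}\,N-\frac{\gamma^2}{2}R}$ an independent log-normal weight. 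By independence $\mathbb{E}[\mu^{\gamma}_{\tilde Z}(A)^{p}]=\mathbb{E}[W^{p}]\,\mathbb{E}[\mu^{\gamma}_{Z}(A)^{p}]$, with the explicit factor $\mathbb{E}[W^{p}]=e^{\frac{\gamma^2}{2}R\,p(p-1)}$.

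For the convex range $p\in(-\infty,0]\cup[1,\infty)$, where $x\mapsto x^{p}$ is convex on $(0,\infty)$, I would set $\tilde Y\coloneqq Y+\sqrt{R}\,N$; the hypothesis $\mathbb{E}[X(z)X(w)]\le\mathbb{E}[Y(z)Y(w)]+R$ says precisely that $\tilde Y$ dominates $X$ in covariance on all of $\Omega^2$. Applying Kahane's convexity inequality — as usual to discrete Riemann-sum approximations of the measures, then passing to the limit, with a truncation of $x^{p}$ near $0$ when $p<0$ to meet the subgaussian-growth hypothesis — gives $\mathbb{E}[\mu^{\gamma}_{X}(A)^{p}]\le\mathbb{E}[\mu^{\gamma}_{\tilde Y}(A)^{p}]=e^{\frac{\gamma^2}{2}R\,p(p-1)}\,\mathbb{E}[\mu^{\gamma}_{Y}(A)^{p}]$, which is the claim with $C=e^{\frac{\gamma^2}{2}R\,p(p-1)}$. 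When the right-hand side is infinite the inequality is vacuous, so no a priori finiteness is required.

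The delicate range is $p\in(0,1)$, where $x\mapsto x^{p}$ is concave and Kahane applied with $\tilde Y$ dominating $X$ points in the unhelpful direction (it would only produce a lower bound on $\mathbb{E}[\mu^{\gamma}_X(A)^p]$). Here I would use that $X$ and $Y$ are both log-correlated on $\Omega$, so their correction terms are bounded and the reverse comparison $\mathbb{E}[Y(z)Y(w)]\le\mathbb{E}[X(z)X(w)]+R'$ holds for some $R'>0$ (equal to $R$ in the symmetric situation $K_X=K_Y+g$ with $|g|\le\|g\|_{\infty}$, which is exactly the case relevant to the application motivating this corollary). Then $\tilde X\coloneqq X+\sqrt{R'}\,N$ dominates $Y$, and applying Kahane to the convex functional $-x^{p}$ gives $\mathbb{E}[\mu^{\gamma}_{Y}(A)^{p}]\ge\mathbb{E}[\mu^{\gamma}_{\tilde X}(A)^{p}]=e^{\frac{\gamma^2}{2}R'p(p-1)}\,\mathbb{E}[\mu^{\gamma}_{X}(A)^{p}]$. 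Since the log-normal factor is strictly positive, rearranging yields $\mathbb{E}[\mu^{\gamma}_{X}(A)^{p}]\le e^{\frac{\gamma^2}{2}R'p(1-p)}\,\mathbb{E}[\mu^{\gamma}_{Y}(A)^{p}]$.

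I expect the main obstacle to be exactly this concave range $0<p<1$: the one-sided covariance bound delivers Kahane's inequality in the wrong direction, and one genuinely must exploit that the two fields are both log-correlated — hence their difference is two-sided bounded — to recover covariance domination compatible with concavity. This is also visible structurally, since a crude Jensen bound gives $\mathbb{E}[\mu^{\gamma}_X(A)^p]\le(\mathbb{E}[\mu^{\gamma}_X(A)])^p=(\mathbb{E}[\mu^{\gamma}_Y(A)])^p$, and closing the gap to $C\,\mathbb{E}[\mu^{\gamma}_Y(A)^p]$ amounts to a reverse Jensen estimate for $\mu^{\gamma}_Y$ that only the reverse domination supplies. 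The remaining ingredients — the discretization and near-zero truncation needed to fit $x^{p}$ into the subgaussian-growth framework of Lemma~\ref{lem:KahaneConvexity}, and the passage from discrete weights to the integral measure $\mu^{\gamma}$ — are routine and rest only on the non-degeneracy and classical moment bounds recalled in Section~\ref{sec:Background}.
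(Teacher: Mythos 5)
Your proposal follows the same core route as the paper's own proof: absorb the additive constant $R$ into an independent Gaussian $\sqrt{R}\,N$ added to $Y$, observe that $Y+\sqrt{R}\,N$ dominates $X$ in covariance on all of $\Omega^2$, apply Kahane's convexity inequality, and factor out the independent log-normal weight to get the explicit constant $e^{\frac{\gamma^2}{2}Rp(p-1)}$. Where you genuinely add something is the concave range $0<p<1$. The paper's proof is a single application of Lemma~\ref{lem:KahaneConvexity} stated for all $p\in\mathbb{R}$, and it does not comment on the fact that $x\mapsto x^{p}$ is concave there, so that domination of $X$ by $\tilde Y$ yields the inequality in the unhelpful direction; your observation that one must instead use the \emph{reverse} domination $\mathbb{E}[Y(z)Y(w)]\le\mathbb{E}[X(z)X(w)]+R'$ (available because both fields are log-correlated with bounded correction terms, and automatic with $R'=R$ in the intended application where the two kernels differ by a bounded $g$) and apply Kahane to the convex functional $-x^{p}$ is exactly the right repair, and it is the more careful treatment. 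The only caveat is that in this range your constant depends on $R'$ rather than on $R$ alone, so strictly speaking the conclusion as stated (constant depending only on $R,\gamma,p$) requires either the symmetric hypothesis or a two-sided bound to begin with; since the corollary is only invoked in the paper to compare a kernel with bounded correction term $g$ against the exact-scaling kernel, where $|g|\le\|g\|_\infty$ gives both directions with the same constant, this does not affect any application. Your side computations (the factorization $\mu^{\gamma}_{\tilde Z}=W\mu^{\gamma}_{Z}$ and $\mathbb{E}[W^{p}]=e^{\frac{\gamma^2}{2}Rp(p-1)}$) are correct, and the remarks about discretization and truncating $x^{p}$ near zero to meet the subgaussian-growth hypothesis are the standard routine steps the paper also leaves implicit.
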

\begin{proof}
From the relation of covariances we deduce that, if $N$ is an independent standard Gaussian variable, the field $\{Y(z)+\sqrt{R}N\}_{z\in\Omega}$ dominates in covariance the field $\{X(z)\}_{z\in\Omega}$ on $\Omega^2$. Therefore, applying Kahane's convexity inequality and factorizing out the renormalized exponential of $\sqrt{R}N$, we get
\begin{equation*}
    \mathbb{E}[\mu^{\gamma}_{X}(A)^{p}]\leq \mathbb{E}[(e^{\gamma\sqrt{R}N-\frac{\gamma^2}{2}R})^{p}]\mathbb{E}[\mu^{\gamma}_{Y}(A)^{p}]
\end{equation*}
One checks that the multiplicative factor in the last term is finite and depends only on $R,\gamma$ and $p$.
\end{proof}

In Section~\ref{sec:TechnicalEstimates} the ideas of the above proof was extended to the case with Lemma~\ref{lem:Slepian}.

\bibliographystyle{alpha}

\end{document}